\DeclareMathOperator{\Hom}{Hom}
\DeclareMathOperator{\sHom}{\underline{\Hom}}
\DeclareMathOperator{\RHom}{\mathrm{R}\!\Hom}
\DeclareMathOperator{\End}{End}
\DeclareMathOperator{\sEnd}{\underline{\End}}
\def\lotimes{\otimes^\mathrm{L}}
\DeclareMathOperator{\Ext}{Ext}
\DeclareMathOperator{\pd}{proj.dim}
\DeclareMathOperator{\id}{inj.dim}
\DeclareMathOperator{\gd}{gl.dim}
\DeclareMathOperator{\gldim}{gl.dim}
\DeclareMathOperator{\md}{mod}
\renewcommand{\mod}{\md}
\DeclareMathOperator{\Mod}{Mod}
\DeclareMathOperator{\smod}{\underline{\md}}
\DeclareMathOperator{\proj}{proj}
\DeclareMathOperator{\fl}{fl}
\DeclareMathOperator{\qmod}{qmod}
\DeclareMathOperator{\CM}{CM}
\DeclareMathOperator{\coh}{coh}
\DeclareMathOperator{\sg}{sg}
\DeclareMathOperator{\thick}{thick}
\DeclareMathOperator{\per}{per}
\DeclareMathOperator{\add}{add}
\DeclareMathOperator{\Proj}{Proj}
\DeclareMathOperator{\SL}{SL}
\DeclareMathOperator{\diag}{diag}
\DeclareMathOperator{\ch}{char}
\def\seg{\#}
\def\rD{\mathrm{D}}
\def\rK{\mathrm{K}}
\def\rC{\mathrm{C}}
\def\Db{\mathrm{D^b}}
\def\Kb{\mathrm{K^b}}
\def\rb{\mathrm{b}}
\newcommand\recollement[3]{\xymatrix{{#1}\ar[r]&{#2}\ar[r]\ar@/_7pt/[l]\ar@/^7pt/[l]&{#3}\ar@/_7pt/[l]\ar@/^7pt/[l] }}
\newcommand\recollementwithmaps[9]{\xymatrix{{#1}\ar[r]|-{#8}&{#2}\ar[r]|-{#5}\ar@/_7pt/[l]_-{#7}\ar@/^7pt/[l]^-{#9}&{#3}\ar@/_7pt/[l]_-{#4}\ar@/^7pt/[l]^-{#6} }}
\def\A{\mathcal{A}}
\def\C{\mathcal{C}}
\def\D{\mathcal{D}}
\def\M{\mathcal{M}}
\def\cQ{\mathcal{Q}}
\def\T{\mathcal{T}}
\def\rU{\mathrm{U}}
\def\G{\Gamma}
\def\Ga{\Gamma}
\def\La{\Lambda}
\def\Om{\Omega}
\def\a{\alpha}
\def\Z{\mathbb{Z}}
\def\PP{\mathbb{P}}
\def\op{\mathrm{op}}
\def\rsimeq{\rotatebox{-90}{$\simeq$}}
\def\xsimeq{\xrightarrow{\simeq}}
\def\ysimeq{\xleftarrow{\simeq}}
\def\vsubset{\rotatebox{90}{$\subset$}}
\newtheorem{Thm}{Theorem}[section]
\newtheorem{Lem}[Thm]{Lemma}
\newtheorem{Prop}[Thm]{Proposition}
\newtheorem{Cor}[Thm]{Corollary}
\newtheorem{Prop-Def}[Thm]{Proposition-Definition}
\newtheorem{Thm-Def}[Thm]{Theorem-Definition}
\theoremstyle{definition}
\newtheorem{Def}[Thm]{Definition}
\newtheorem{Ex}[Thm]{Example}
\newtheorem{Assum}[Thm]{Assumption}
\theoremstyle{remark}
\newtheorem{Rem}[Thm]{Remark}
\newcounter{step}
\def\disoplus{\displaystyle\bigoplus}
\newcommand\Frac[2]{\displaystyle\frac{#1}{#2}}
\def\Binom{\displaystyle\binom}
\def\Wedge{\textstyle\bigwedge}
\title{Higher hereditary algebras and Calabi-Yau algebras \linebreak arising from some toric singularities}
\author{Norihiro Hanihara}
\thanks{This work is supported by JSPS KAKENHI Grant Number JP22KJ0737. The author is supported by Kavli Institute for the Physics and Mathematics of the Universe (WPI), The University of Tokyo, as an affiliate member.}
\subjclass[2020]{13C14, 16E35, 16S38, 13D02, 16G10, 14A22, 16G60}
\keywords{Veronese subring, Segre product, $n$-representation infinite algebra, singularity category, cluster category, strict root pair, twisted Calabi-Yau algebra}
\address{Faculty of Mathematics, Kyushu University, 744 Motooka, Nishi-ku, Fukuoka, 819-0395, Japan}
\email{hanihara@math.kyushu-u.ac.jp}
\date{}
\DeclareMathOperator{\qper}{qper}
\newcommand{\sA}{\underline{A}}
\newcommand{\snu}{\underline{\nu}}
\begin{document}
\begin{abstract}
We study graded and ungraded singularity categories of some commutative Gorenstein toric singularities, namely, Veronese subrings of polynomial rings, and Segre products of some copies of polynomial rings. We show that the graded singularity category has a tilting object whose endomorphism ring is higher representation infinite. Moreover, we construct the tilting object so that the endomorphism ring has a strict root pair of its higher Auslander-Reiten translation, which allows us to give equivalences between singularity categories and (folded) cluster categories in a such a way that their cluster tilting objects correspond to each other.
Our distinguished form of tilting objects also allows us to construct (twisted) Calabi-Yau algebras as the Calabi-Yau completions of the root pairs. We give an explicit description of these twisted Calabi-Yau algebras as well as the higher representation infinite algebras in terms of quivers and relations.
Along the way, we prove that certain idempotent quotients of higher representation infinite algebras remain higher representation infinite.
\end{abstract}

\maketitle
\setcounter{tocdepth}{1}
\tableofcontents
\section{Introduction}
\subsection{Background}
Tilting theory provides an effective and suggestive method in representation theory of commutative rings and finite dimensional algebras \cite{Iy18}. It aims at giving equivalences between various triangulated categories arising from different contexts. The triangulated categories we are interested in in this article are the {\it {\rm(}graded{\rm)} singularity category} \cite{Bu,Or04} of a commutative ring $R$, and the {\it derived category} or the {\it cluster category} \cite{BMRRT,Am09, Ke11} of a finite dimensional algebra $A$. %and the derived category of a {\it non-commutative projective scheme}.

%Let us recall what these categories are. Let $R$ be a commutative Gorenstein ring. Its {\it singularity category} $\sg R$ is defined as the Verdier quotient
%\[ \sg R:=\Db(\mod R)/\per R \]
%of the bounded derived category of finitely generated modules by the perfect derived category. By Buchweitz's theorem \cite{Bu}, it is trianlge equivalent to the stable category $\sCM R$ of (maximal) Cohen-Macaulay $R$-modules which is a classical object to study in representation theory \cite{Yo90,LW}.
%%By Auslander--Reiten duality, $\sg R$ is a $(d-1)$-Calabi-Yau triangulated category when $R$ is an isolated singularity.
%
%On the other hand, let $A$ be a finite dimensional algebra of finite global dimension over a field $k$, and let $n\in\Z$. Its {\it $n$-cluster category} $\rC_n(A)$ is the triangulated hull \cite{Ke05} of the orbit category of the bounded derived category $\Db(\mod A)$ by the shifted Serre functor $\nu_n:=-\lotimes_ADA[-n]$:
%\[ \xymatrix{ \Db(\mod A)/-\lotimes_ADA[-n]\ar@{^(->}[r]&\rC_n(A) }. \]
%Under some finite conditions on $A$ (e.g.\! $\gldim A<n$), the category $\rC_n(A)$ is an $n$-Calabi-Yau triangulated category endowed with an $n$-cluster tilting object.

%Given a triangulated category $\T$ endowed with a tilting object $T$, we know \cite{Ke94} that there is a triangle equivalence $\T\simeq\per\End_\T(T)$, realizing the abstract triangulated category $\T$ as an explicit derived category $\per\End_\T(T)$. The (enhanced) triangle equivalence $\T\simeq\per\End_\T(T)$ descends to their triangulated hulls of orbit categories.

Let us briefly recall how these categories are related.
Let $R$ be a commutative Gorenstein ring, and let
\[ \sg R:=\Db(\mod R)/\per R \]
be the {\it singularity category} of $R$. Thus, by definition, it is the Verdier quotient of the bounded derived category of finitely generated modules by the perfect derived category. It was introduced by Buchweitz \cite{Bu} and has been actively studied in representation theory \cite{Yo90,LW,Ric2}, algebraic geometry, mirror symmetry \cite{Or04}, and so on.
Also, for a finite dimensional algebra $A$ of finite global dimension over a field $k$ and $n\in\Z$, let
\[ \rC_n(A):=(\per A/\nu_n)_\triangle \]
be its {\it $n$-cluster category} \cite{BMRRT}. Thus it is defined as the triangulated hull \cite{Ke05} of the orbit category of $\per A$ by the shifted Serre functor $\nu_n:=-\lotimes_ADA[-n]$. Such categories have played a crucial role in the categorification of Fomin-Zelevinsky's cluster algebras \cite{CA1}.

The categories $\sg R$ and $\rC_n(A)$ share many important properties; they are (under some assumptions) {Calabi-Yau triangulated categories} endowed with {cluster tilting objects} \cite{Au78,Iy07a,Am09,Guo,Ke11}. In fact, there are many examples where the singularity category $\sg R$ is indeed triangle equivalent to the cluster category $\rC_n(A)$.
More generally, we can consider {\it folded cluster categories} $\rC_n^{(1/a)}(A)=(\per A/-\lotimes_AV)_\triangle$ \cite{Ha3,HaI,Ha7} for some positive integer $a$ and an $a$-th root $V$ of $DA[-n]$ (that is, a complex of $(A,A)$-bimodules such that $V^{\lotimes_Aa}\simeq DA[-n]$). It is a natural quotient of the (ordinary) cluster category $\rC_n(A)$ by a $\Z/a\Z$-action, providing a model to realize the singularity category $\sg R$ as a ``canonical form'' $\rC_n^{(1/a)}(A)$ of Calabi-Yau triangulated categories.

%Here, an {\it $a$-th root} of (the inverse of) the $n$-shifted Serre functor of $A$ is a complex $U$ of $(A,A)$-bimodules such that $U^{\lotimes_Aa}\simeq\RHom_A(DA,A)[n]$.

While it is in genral quite difficult to construct equivalences with cluster categories, we may apply the method developed in \cite{HaI} to obtain such an equivalence. Starting from a {\it graded} ring $R=\bigoplus_{i\geq0}R_i$ which is a commutative Gorenstein isolated singularity of dimension $d\geq0$ with Gorenstein parameter $a$, the main result of \cite{HaI} states that we get a commutative diagram of (vertical) equivalences
\begin{equation}\label{diagram}
\xymatrix@R=4mm{
	\sg^\Z\!R\ar[d]^-\rsimeq\ar[r]&\sg^{\Z/a\Z}\!R\ar[r]\ar[d]^\rsimeq&\sg R\ar[d]^-\rsimeq\\
	\per A\ar[r]&\rC_{d-1}(A)\ar[r]&\rC_{d-1}^{(1/a)}(A) }
\end{equation}
as soon as we find a leftmost triangle equivalence. This can be done by finding a single tilting object in $\sg^\Z\!R$ and thus much easier than constructing equivalences directly with cluster categories.
Here, the lower right corner is the $a$-folded cluster category of $A$ obtained from the $a$-th root of $\nu_{d-1}^{-1}$ \cite{Ha7} on $\per A$ corresponding to the degree shift functor on $\sg^\Z\!R$.

\medskip

Note, however, that the above equivalences do not necessarily respect cluster tilting objects even when there are such objects on the singularity category sides.
First of all, the existence of a cluster tilting obejct in the (ordinary) cluster category $\rC_{d-1}(A)$ is guaranteed only under some finiteness conditions on $A$ (see Section \ref{cluster}). Similarly, for the folded cluster category $\rC_{d-1}^{(1/a)}(A)$ to contain a cluster tilting object, the root of $\nu_{d-1}$ we use to construct the category should consist a {\it root pair} in the sense of \cite{Ha7} (see \ref{strict}). 
%one must have a cluster tilting object of $\rC_{d-1}(A)$ stable under the action of $\Z/a\Z$.
These constraints require a tilting object we should find in $\sg^\Z\!R$ to be ``as good as possible'' from the point of view of the compatibility with cluster tilting objects we have discussed.

\subsection{Results}
The aim of this paper is to present a class of examples, given by toric singularities, for which one can find ``best possible'' tilting objects. It provides us with a nice description of the folded cluster category, and also implies in particular that the equivalences \eqref{diagram} are compatible with canonical cluster tilting objects. Precisely, we give some commutative Gorenstein rings $R$ which have with cluster tilting objects, and tilting objects $T\in\sg^\Z\!R$ with $\End_{\sg R}^\Z(T)=:A$ satisfying the following, each of which to be explained below.
\begin{enumerate}
\item\label{sroot} The equivalence $\sg^\Z\!R\simeq\per A$ yields a {\it root pair} \cite{Ha7} on $\per A$ which is furthermore {\it strict}.
\item\label{ct} The equivalences in \eqref{diagram} restrict to their canonical cluster tilting subcategories.
\item\label{high} The algebra $A$ is {\it higher representation infinite} \cite{HIO}.
\end{enumerate}
%\begin{itemize}
%\item The degree shift functor $(1)$ on $\sg^\Z\!R$ yields a root {\it pair} on $\per A$. Moreover, it is {\it strict} (see \ref{strict}).
%\item The vertical equivalences in \eqref{diagram} restrict to their canonical cluster tilting subcategories.
%\end{itemize}

Concerning the points (\ref{sroot}) and (\ref{ct}), recall that an {\it $a$-th root pair} \cite{Ha7} on a finite dimensional algebra $A$ is a pair $(U,P)$ consisting of an $a$-th root $U$ of the inverse of a shifted Serre functor and $P\in\proj A$ satisfying some conditions (see \ref{strict}).
%It ensures the existence of a cluster tilting object in 
%Here, an {\it $a$-th root} of (the inverse of) the $n$-shifted Serre functor of $A$ is a complex $U$ of $(A,A)$-bimodules such that $U^{\lotimes_Aa}\simeq\RHom_A(DA,A)[n]$. It is easy to see from the comparison of Serre functors that whenever we have an equivalence $\sg^\Z\!R\simeq\per A$ for a commutative graded Gorenstein isolated singularity $R$, then the degree shift functor $(1)$ on $\sg^\Z\!R$ yields a root $U$ of a shifted Serre functor.
It is easy to see that given an equivalence $\sg^\Z\!R\simeq\per A$ for a commutative graded Gorenstein isolated singularity $R$ of dimension $d$, then the degree shift functor $(1)$ on $\sg^\Z\!R$ yields a root $U$ of $\nu_{d-1}^{-1}$.
However, the exsistence (if any) of a projective module $P$ such that $(U,P)$ is a root pair is a subtle problem. Our construction of a tilting object $T$ do yield such a $P$, which ensures the existence of a cluster tilting object in the folded cluster category.
Another virtue of our construction of $T$ is that the root pair $(U,P)$ is furthermore {\it strict} (\ref{strict}). This gives an understanding of the action of the root $U$ in terms of the finite dimensional algebra $A$ as a certain generalization of reflection functors.

Concerning the point (\ref{high}), this allows us to construct (twisted) Calabi-Yau algebras (or Artin-Schelter regular algebras) \cite{AS,Gi,Ke11} as the Calabi-Yau completion \cite{Ke11,Ha7} of root pairs. Such non-commutative regular algerbas play an important role in non-commutative algebraic geometry especially through the non-commutative projective schemes \cite{AZ} and are closely related to representation theory of finite dimensional algebras via derived categories \cite{MM}. In this way, our results below allow us to relate the derived categories of non-commutative projective schemes and graded singularity categories.

\medskip
The first result is the following, which deals with Veronese subrings of polynomial rings, in other words, certain cyclic quotient singularities.
\begin{Thm}[=\ref{VER}]\label{iVER}
Let $k$ be an arbitrary field, $a\geq1$ and $n\geq2$ be arbitrary integers, $d=an$, and $S=k[x_1,\ldots,x_d]$. Let $R=S^{(n)}$ be the $n$-th Veronese subalgebra. Give a grading on the $R$-module $S$ (in particular on the ring $R$) by requiring a monimial of degree $cn+j$ to have degree $c$ when $0\leq j\leq n-1$. Put
\[ 	T=S\oplus \Om S(1)\oplus\cdots\oplus \Om^{a-1}S(a-1). \]
Then we have the following.
\begin{enumerate}
	\item $T\in\sg^\Z\! R$ is a tilting object.
	\item\label{pair} $A=\End_{\sg R}^\Z(T)$ is $(d-a-1)$-representation infinite and has a strict $a$-th root pair $(U,P)$ of $\nu_{d-a-1}^{-1}$ with $U=\Hom_{\sg R}^\Z(T,\Om T(1))$ and $P=\Hom_{\sg R}^\Z(T,S)$.
	\item We have a commutative diagram of equivalences
	\[ \xymatrix@R=5mm{
	\sg^\Z\!R\ar[r]\ar[d]^-\rsimeq&\sg^{\Z/a\Z}\!R\ar[r]\ar[d]^-\rsimeq&\sg R\ar[d]^-\rsimeq\\
	\Db(\mod A)\ar[r]&\rC_{d-1}(A)\ar[r]&\rC_{d-1}^{(1/a)}(A). } \]
	\item The equivalences in {\rm (3)} restrict to canonical cluster tilting subcategories
	\[ \xymatrix@R=5mm{
		\add\{S(i)\mid i\in\Z\}\ar[r]\ar[d]^-\rsimeq&\add\{S(i)\mid i\in\Z/a\Z\}\ar[r]\ar[d]^-\rsimeq&\add S\ar[d]^-\rsimeq\\
		\add\{ P\lotimes_AU^i[i]\mid i\in\Z\}\ar[r]&\add A\ar[r]&\add P. } \]
\end{enumerate}
\end{Thm}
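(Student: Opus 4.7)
The plan is to construct $T$ explicitly as a Beilinson-type tilting object in $\sg^\Z\!R$ using the Koszul structure of $S=k[x_1,\ldots,x_d]$ as an $R$-module, then to read off the higher representation infinite structure on $A$ and the root pair from this explicit description. With the author's regrading, $R$ is Gorenstein of dimension $d=an$ with Gorenstein parameter $a$, so the tilting window in $\sg^\Z\!R$ should have width $a$, matching the number of summands of $T$.

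First I would analyze $S$ as a graded $R$-module. The decomposition $S=\bigoplus_{j=0}^{n-1}S^{[j]}$ by residue class modulo $n$ of the original degree splits $S$ into graded maximal Cohen-Macaulay $R$-modules, and the degree shift $(1)$ permutes them cyclically up to isomorphism. Standard calculations with the Koszul resolution of $k$ over $S$, taking graded $R$-summands, realize each syzygy $\Omega^i S(i)$ as an explicit kernel among shifts of $S$ in $\CM^\Z\!R$; iterating, every $S(\ell)$ with $\ell\in\Z$ lies in $\thick T$, so $\thick T=\sg^\Z\!R$. Ext-vanishing $\Hom_{\sg R}^\Z(T,T[k])=0$ for $k\neq0$ reduces, via $\Omega=[-1]$ in $\sg^\Z\!R$, to
\[ \Ext^m_{\sg^\Z\!R}\bigl(S,S(\ell)\bigr)=0\quad\text{for }|\ell|\leq a-1\text{ and }m\neq-\ell, \]
which is a direct computation from the Koszul resolution, since $|\ell|$ stays within the Gorenstein window $|\ell|<a$.

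For (2), I would interpret $U=\Hom_{\sg R}^\Z(T,\Omega T(1))$ as the $A$-bimodule corresponding to the autoequivalence $\Omega\circ(1)$ of $\sg^\Z\!R$. Using that the graded Serre functor on $\sg^\Z\!R$ is $(-a)\circ[d-1]$, one obtains the identity $U^{\lotimes_A a}\simeq\nu_{d-a-1}^{-1}$ formally. For the $(d-a-1)$-representation infinite property of $A$, I would combine the tilting equivalence $\sg^\Z\!R\simeq\per A$ with the paper's own result that certain idempotent quotients of higher representation infinite algebras remain higher representation infinite, applied to a standard higher Beilinson-type algebra associated to $\PP^{d-1}$ (the non-commutative projective scheme of $R$ in the original Veronese grading). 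Strictness of the root pair $(U,P)$ with $P=\Hom_{\sg R}^\Z(T,S)$ is then checked by verifying that the triangle attached to $P\to P\lotimes_A U[1]$ recovers the Koszul syzygy linking $S$ with $\Omega S(1)$ inside $\sg^\Z\!R$.

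Finally, (3) and (4) follow by applying the main theorem of \cite{HaI} to $T$: the diagram in (3) is immediate from the tilting equivalence and the $a$-th root structure, and the cluster tilting correspondence in (4) is essentially built in by construction, since the equivalence sends $S\in\sg R$ to $P\in\per A$ and more generally $S(i)\in\sg^\Z\!R$ to $P\lotimes_A U^i[i]$ by the very definition of $U$. The main obstacle will be the Ext-vanishing computation via the Koszul complex for all $|\ell|\leq a-1$; once this is in place, both the higher representation infinite property and the strictness of the root pair should follow by reasonably direct, though careful, verification.
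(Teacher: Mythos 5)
Your outline for (1), (3) and (4) --- generation of $\sg^\Z\!R$ via the Koszul-induced sequences among the $S(\ell)$, reduction of the tilting condition to graded Ext-vanishing between twists of $S$, and then invoking \cite{HaI} together with the root-pair formalism of \cite{Ha7} via the $\Om(1)$-orbit structure of $T$ --- matches the paper's route. The genuine gap is in your argument for (2). You propose to obtain the $(d-a-1)$-representation infiniteness of $A$ by applying the reduction theorem \ref{-1RI} to a standard Beilinson-type algebra of $\PP^{d-1}$. This only works when $a=1$ (this is precisely the paper's remark after the proof of \ref{VER}, together with Example \ref{trunc}). For $a\geq2$ it fails on two counts. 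First, each application of \ref{-1RI} lowers the representation-infinite dimension by exactly one, so you would need $a$ successive reductions starting from the $(d-1)$-representation infinite Beilinson algebra, and at each stage you must verify $eA(1-e)=0$, $\id_A\sA<d$ and semisimplicity of $eAe$; no candidate chain of idempotents is given and these verifications are nontrivial. Second, and decisively, $A$ is not an idempotent quotient of the Beilinson algebra of $\PP^{d-1}$ when $a\geq2$: by the paper's presentation (\ref{Gamma} and its corollary) the quiver of $A$ contains $\binom{d}{2}$-fold arrows $\Om^pM_{n-1}\to\Om^{p+1}M_1$, whereas the quiver of any quotient $B/BeB$ is a full subquiver of the quiver of $B$, and the Beilinson quiver has only $d$-fold arrows; since $\binom{d}{2}>d$ for $d=an\geq4$, no such presentation exists. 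The paper instead proves representation infiniteness directly, by showing $\sHom_R^\Z(S,\Om^qS(q)[p])=0$ for all $q>-a$ and $p\neq0$, which rests on the concentration statement \ref{conc}.

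A related underestimate concerns the tilting condition itself: the vanishing $\Ext^m_{\sg^\Z R}(S,S(\ell))=0$ for $m\neq-\ell$ must be proved for \emph{all} $m$, in particular for all negative $m$ (morphisms into arbitrarily high syzygies), since negative-degree Homs in $\sg^\Z\!R$ do not vanish for formal reasons; moreover $\Om$ is the syzygy over $R$, and the Koszul complex is not an $R$-projective resolution, so this is not a direct Koszul computation. This is exactly why the paper develops the induction of \ref{pure}, \ref{ind} and \ref{conc}, tracking degrees of generation of the syzygies of the auxiliary modules $N_{i,l}$. That part of your plan is correct in outline but is the technical heart of the proof; the actual gap is (2), which either requires the full strength of \ref{conc} (as in the paper) or a genuinely different ambient tilting object on $\PP^{d-1}$ whose existence, representation infiniteness, and idempotent structure you would have to establish separately.
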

The cluster tilting object $S\in\sg R$ is given in \cite{Iy07a}, and in \cite{IT} for the graded setting.
Note that different tilting objects in $\sg^\Z\!R$ are constructed in \cite{IT}, but these are not compatible with the orbit construction of cluster categories, in other words, we do not have an analogue of (4) for these tilting objects. In this way, our result can be seen as a partial refinement of \cite{IT}.

For the case $n=d$, the algebra $A=\End_{\sg R}^\Z(T)$ which is isomorphic the stable endomorphism ring $\sEnd_{R}(S)$, is an idempotent quotient of the Beilinson algebra. We refer to \cite{Ka23} for the appearance of this algebra in tilting theory for singular varieties.

As mentioned above, the endomorphism ring $A$ of $T$ being higher representation infinite allows us to construct twisted Calabi-Yau algebras $\Ga$, which is the Calabi-Yau completion of the root pair in \ref{iVER}(\ref{pair}) above. In this case, it is given as the graded ring
\[ \Ga:=\bigoplus_{i\geq0}\sHom_R(S,\Om^iS). \]
We construct equivalences between $\sg^\Z\!R$ and the derived category of the non-commutative projective scheme of $\Ga$, which we denote by $\qper^\Z\!\Ga$.
\begin{Thm}[=\ref{qgr}]
Let $S$, $R$, and $\Ga$ be as above.
\begin{enumerate}
\item The algebra $\Ga$ is a twisted $(d-a)$-Calabi-Yau algebra of Gorenstein parameter $a$.
\item There exist triangle equivalences
\[ \sg^\Z\!R\simeq\Db(\mod A)\simeq\qper^\Z\!\Ga. \]
%\[ \xymatrix@R=4mm{
%	\sg^\Z\!R\ar@{-}[r]^-\simeq\ar[d]&\Db(\mod A)\ar@{-}[r]^-\simeq\ar[d]&\Db(\qmod^\Z\!\Ga)\ar[d]\\
%	\sg R\ar@{-}[r]^-\simeq&\rC_{d-1}^{(1/a)}(A)\ar@{-}[r]^-\simeq&\rC(\Ga^\dg). } \]
\item The above equivalences restrict to canonical $(d-1)$-cluster tilting subcategories
\[ \add\{S(i)\mid i\in\Z\}\simeq\add\{ P\lotimes_AU^i[i]\mid i\in\Z\}\simeq\add\{\Ga(i)[i]\mid i\in\Z\}. \]
\end{enumerate}
\end{Thm}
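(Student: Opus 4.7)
The plan is to derive all three claims from Theorem \ref{iVER} via the machinery of Calabi--Yau completions of strict root pairs developed in \cite{Ha7}. The crucial input is the strict $a$-th root pair $(U,P)$ of $\nu_{d-a-1}^{-1}$ on the $(d-a-1)$-representation infinite algebra $A = \End^\Z_{\sg R}(T)$, together with the tilting equivalence $\sg^\Z\!R \xrightarrow{\simeq} \per A$ identifying $S$ with $P$.

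The first and main step is to identify $\Ga$ with the Calabi--Yau completion $\Pi$ of the pair $(A,U)$. Under the tilting equivalence, the autoequivalence $X \mapsto \Om X(1)$ of $\sg^\Z\!R$ corresponds to $-\lotimes_A U$ on $\per A$. Iteration together with the strict-root-pair vanishing (which concentrates $P \lotimes_A U^{\otimes_A i}$ in cohomological degree zero) yields
\[ \sHom_R(S,\Om^i S) \simeq \Hom_{\per A}\bigl(P, P \lotimes_A U^{\otimes_A i}\bigr), \]
and composition of stable maps on the left corresponds to concatenation of tensors on the right, giving an isomorphism of graded algebras $\Ga \simeq \Pi$. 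Strictness of $(U,P)$ further ensures $\Pi$ is concentrated in cohomological degree zero, so $\Ga$ is an honest graded algebra.

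Given this identification, claim (1) is the general fact from \cite{Ha7}: the Calabi--Yau completion of a strict $a$-th root pair on an $n$-representation infinite algebra is a twisted $(n+1)$-Calabi--Yau algebra of Gorenstein parameter $a$. Applying with $n = d-a-1$ gives claim (1). For claim (2), $\sg^\Z\!R \simeq \Db(\mod A)$ is already part of \ref{iVER}, while $\Db(\mod A) \simeq \qper^\Z\!\Ga$ is the Artin--Zhang/Minamoto--Mori equivalence \cite{AZ, MM} in the form adapted to root pairs in \cite{Ha7}, under which $P \lotimes_A U^i[i]$ corresponds to $\Ga(i)[i]$. Claim (3) then follows by combining these with \ref{iVER}, since the canonical $(d-1)$-cluster tilting subcategory of $\qper^\Z\!\Ga$ for $\Ga$ of the above type is $\add\{\Ga(i)[i] \mid i \in \Z\}$, which then matches $\add\{P \lotimes_A U^i[i]\}$ in $\per A$ and $\add\{S(i)\}$ in $\sg^\Z\!R$.

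The main obstacle lies in Step 1, the identification $\Ga \simeq \Pi$. One must verify that composition in the ungraded stable Hom's assembles into the tensor algebra structure on $\Pi$, and that the cross terms $\Hom_{\per A}(P, P \lotimes_A U^{\otimes_A j}[j-i])$ for $j \neq i$ vanish so that the internal grading collapses onto the diagonal. Both of these hinge on the strictness of the root pair: strictness makes $U$ a genuine bimodule rather than a complex, so $\Pi$ is an honest graded algebra, and the accompanying nRI-type vanishing guarantees the diagonal concentration.
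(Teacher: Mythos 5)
Your treatment of parts (2) and (3) matches the paper's: the equivalence $\Db(\mod A)\simeq\qper^\Z\!\Ga$ is taken from \cite[4.12]{MM} (cf. \cite[4.9]{Ha3}), the cluster tilting subcategory $\add\{\Ga(i)[i]\mid i\in\Z\}$ of $\qper^\Z\!\Ga$ from \cite[4.6]{Ha3}, and compatibility of $(1)$ with $(1)[-1]$ then restricts the equivalences to the cluster tilting subcategories. The divergence, and the weak point, is part (1). You establish the twisted Calabi--Yau property by identifying $\Ga$ with the Calabi--Yau completion of the root pair $(U,P)$ and then quoting a general theorem attributed to \cite{Ha7} (``the CY completion of a strict $a$-th root pair on an $n$-representation infinite algebra is twisted $(n+1)$-CY of Gorenstein parameter $a$''). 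That reference is in preparation, and the only result imported from it in this paper is the folded-cluster-category theorem of Section \ref{cluster}; so as written, your proof of (1) defers precisely the content that has to be established. The paper instead argues self-containedly: by \ref{conc}, $\Ga=\bigoplus_{i\ge0}\sHom_R(S,\Om^iS)$ is concentrated in the expected internal degrees; under $\Db(\mod A)\simeq\sg^\Z\!R$ the compatibility $\nu_{d-a-1}^{-1}\leftrightarrow(a)[-a]$ identifies the $(d-a)$-preprojective algebra $\Pi=\bigoplus_{l\ge0}\Hom_A(A,\nu_{d-a-1}^{-l}A)$ with $\bigoplus_{i\ge0}\sHom_R^\Z(T,\Om^{ia}T(ia))$, which, since $T=S\oplus\Om S(1)\oplus\cdots\oplus\Om^{a-1}S(a-1)$, is the $a$-th covering (quasi-Veronese) $\Ga^{[a]}$; the algebra $\Pi$ is twisted $(d-a)$-CY of Gorenstein parameter $1$ because $A$ is $(d-a-1)$-representation infinite (\cite[4.8]{Ke11}, \cite{HIO,AIR}); and Lemma \ref{qv} descends this to $\Ga$ being twisted $(d-a)$-CY of Gorenstein parameter $a$. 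To make your route complete you would have to either prove your quoted general statement or substitute this covering argument for it.

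A smaller inaccuracy in your Step 1: the ``diagonal concentration'' you need, i.e.\ vanishing of $\Hom_{\per A}(P,P\lotimes_AU^i[l])$ for $l\neq0$ and all $i\ge0$, is not a formal consequence of strictness of the root pair. Strictness only forces $P\lotimes_AU^i$ for $0\le i\le a-1$ to lie in $\add A$; the concentration for all powers uses, in addition, the $(d-a-1)$-representation infiniteness of $A$ (so that $\nu_{d-a-1}^{-c}$ of these projectives remain modules), which on the $R$-side is exactly the content of \ref{conc}. With that correction the identification $\Ga\simeq\bigoplus_{i\ge0}\Hom_{\per A}(P,P\lotimes_AU^i)$ does hold, but the Calabi--Yau property still requires the argument indicated above.
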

The cluster tilting subcategories for $\sg^\Z\!\simeq\Db(\mod A)$ are as in \ref{iVER}, and those for $\qper^\Z\!\Ga$ are given in \cite{Ha3}.
Furthermore, we give an explicit description of $\Ga$ in terms of quivers and relations, see \ref{Gamma}.

\medskip

Now let us turn to our second result which deals with Segre products of some polynomial rings, in other words, certain torus invariants. Again, we construct a tilting object whose endomorphism ring is higher representation infinite, and an equivalence which respects cluster tilting objects.
\begin{Thm}[=\ref{P^1}, \ref{n=3}]\label{in=3}
Let $R=k[x_1,y_1]\seg \cdots\seg k[x_n,y_n]$ be the Segre product of $n$ copies of polynomial rings with the standard grading.
\begin{enumerate}
	%\item The object $T=\bigoplus_{0<v<1}M_v\in\sg^\Z\!R$ is tilting.
	%\item The endomorphism ring $\End_{\sg R}^\Z(T)$ is $(n-2)$-representation infinite with a square root of $\nu_{n-2}^{-1}$.
	\item There exists a tilting object $T\in\sg^\Z\!R$ such that $\End_{\sg R}^\Z(T)$ is $(n-2)$-representation infinite, which has a square root $\nu_{n-2}^{1/2}$ of $\nu_{n-2}$.
	%\item The functor $\Om(1)=(1)[-1]$ on $\sg^\Z\!R$ is a square root of $\nu_{n-2}^{-1}$.
\suspend{enumerate}
Assume in what follows that $n=3$.
\resume{enumerate}
%	\item We have $T=T_0\oplus \Om T_0(1)$ for $T_0=M_{100}\oplus M_{010}\oplus M_{001}$.
	\item\label{T_0} There is a direct summand $T_0$ of $T$ such that $T=T_0\oplus\nu_1^{-1/2}T_0$. Therefore, the hereditary endomorphism ring $\End_{\sg R}^\Z(T)$ has a strict root pair $(U,P)$ of $\nu_1^{-1}=\tau^{-1}$ with $P=\Hom_{\sg R}^\Z(T,T_0)$.
	\item The algebra $\End_{\sg R}^\Z(T)$ is presented by the following alternating $\widetilde{A_5}$ quiver $Q$ with double arrows between the vertices.
	%\[ \xymatrix{
	%	\circ\ar@2[dr]\ar@2[drr]&\circ\ar@2[dl]\ar@2[dr]&\circ\ar@2[dl]\ar@2[dll]\\
	%	\circ&\circ&\circ } \]
		\[ \xymatrix{
		\circ\ar@2[d]\ar@2[dr]&\circ\ar@2[dl]\ar@2[dr]&\circ\ar@2[d]\ar@2[dl]\\
		\circ&\circ&\circ } \]
	\item There exists a commutative diagram of equivalences
	\[ \xymatrix@R=5mm{
		\sg^\Z\!R\ar[r]\ar[d]^-\rsimeq&\sg^{\Z/2\Z}\!R\ar[r]\ar[d]^-\rsimeq&\sg R\ar[d]^-\rsimeq\\
		\Db(\mod kQ)\ar[r]&\rC_3(kQ)\ar[r]&\rC_{3}^{(1/2)}(kQ). } \]
%	Therefore, $R$ is of hereditary representation type.
	\item The equivalences in {\rm (4)} restrict to $3$-cluster tilting subcategories
	\[ \xymatrix@R=5mm{
		\add\{T_0(i)\mid i\in\Z\}\ar[r]\ar[d]^-\rsimeq&\add\{T_0(i)\mid i\in\Z/2\Z\}\ar[r]\ar[d]^-\rsimeq&\add T_0\ar[d]^-\rsimeq\\
		\add\{ P\lotimes_AU^i[i]\mid i\in\Z\}\ar[r]&\add kQ\ar[r]&\add P. } \]
\end{enumerate}
\end{Thm}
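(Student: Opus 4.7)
The strategy parallels the proof of Theorem \ref{VER} for Veronese subrings, exploiting the realization of $R = k[x_1, y_1] \seg \cdots \seg k[x_n, y_n]$ as the homogeneous coordinate ring of $(\PP^1)^n$ under the Segre embedding. As such, $R$ is a graded commutative Gorenstein isolated singularity of dimension $d = n+1$ with Gorenstein parameter $a = 2$ (coming from $\omega_{(\PP^1)^n} = \O(-2, \ldots, -2)$), so the expected higher representation infinite parameter is $d - a - 1 = n - 2$, matching the statement.

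For part (1), I would construct an explicit tilting object $T \in \sg^\Z\! R$ from natural $R$-modules adapted to the $n$-fold product structure, e.g., coming from line bundles $\O(j_1, \ldots, j_n)$ on $(\PP^1)^n$ with entries $j_i \in \{0, 1\}$ and their syzygies over $R$. Tilting (generation and Ext-vanishing) is verified using the comparison between $\sg^\Z\! R$ and $\Db(\coh(\PP^1)^n) \simeq \Db(\qgr R)$ via Orlov-type theorems for positive Gorenstein parameter. To show that $A := \End_{\sg R}^\Z(T)$ is $(n-2)$-representation infinite, I would invoke the idempotent-quotient preservation of higher representation infiniteness announced in the introduction, starting from a tensor-Kronecker algebra coming from the Beilinson tilting of $(\PP^1)^n$. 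The square root $\nu_{n-2}^{1/2}$ arises directly from the degree shift $(1)$ on $\sg^\Z\! R$: by the general framework of \cite{Ha7, HaI}, since the Gorenstein parameter is $a = 2$, two iterations of $(1)$ on $\per A$ implement (up to a cohomological shift) the relevant Serre-type twist, so $(1)$ itself provides the required square root.

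For parts (2)--(5) specialized to $n = 3$, the summand $T_0$ is identified as ``half'' of the indecomposable summands of $T$, chosen so that $(1)T_0$ accounts for the remaining summands and therefore $T = T_0 \oplus \nu_1^{-1/2} T_0$ after transport through the equivalence. By the definition of strictness in \ref{strict}, this makes $(U, P)$ with $P = \Hom_{\sg R}^\Z(T, T_0)$ a strict root pair of $\nu_1^{-1}$. The quiver in part (3) is identified by an explicit computation of $\Hom_{\sg R}^\Z(T_i, T_j)$ between the six indecomposable summands of $T$, matching the dimensions to a double arrow for each edge in the alternating $\widetilde{A}_5$ cycle and checking the vanishing of the remaining Hom-spaces. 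Parts (4) and (5) then follow from the main theorem of \cite{HaI} (giving the diagram of equivalences) together with the machinery of \cite{Ha7} (giving the cluster tilting correspondence via the strict root pair).

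The main technical obstacle is pinning down the precise choice of $T$ so that the endomorphism algebra has the claimed structure: a naive Beilinson-type tilting of $(\PP^1)^n$ would give an $n$-representation infinite algebra with $2^n$ summands and global dimension $n$, so $T$ must be adjusted so that its endomorphism ring is obtained as a suitable idempotent quotient of that algebra, of global dimension $n-2$. For $n = 3$, verifying that $A$ is hereditary (rather than of global dimension $3$) and has exactly the six summands matching the alternating $\widetilde{A}_5$ quiver is the core computation, after which the rest of the argument unpacks the general framework of \cite{HaI, Ha7} to derive the diagram and cluster tilting correspondence.
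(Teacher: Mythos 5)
Your proposal follows essentially the same route as the paper: transport the Beilinson-type tilting bundle $\bigoplus_{0\leq v\leq1}\O(v)$ on $(\PP^1)^n$ through an Orlov-type semiorthogonal decomposition to get $T=\bigoplus_{0<v<1}M_v\in\sg^\Z\!R$, obtain $(n-2)$-representation infiniteness by cutting the $n$-fold tensor Kronecker algebra by the source and sink idempotents via the reduction theorem (note you need both \ref{-1RI} and its dual \ref{-1RI*}, one for each idempotent), read the square root off the degree shift $\Om(1)=(1)[-1]$, and for $n=3$ identify $T_0$ via the Koszul-complex computation $\Om M_{100}(1)=M_{011}$ etc.\ before invoking \ref{tilt}, \cite{HaI} and \cite{Ha7}. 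Apart from minor imprecisions (the root is $(1)[-1]$ rather than $(1)$, and hereditariness of $A$ is already immediate from $1$-representation infiniteness rather than a separate core computation), this matches the paper's argument.
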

We refer to \ref{n=3} for the specific form of $T_0$ in (\ref{T_0}).
We note that cluster tilting objects for these rings are given in \cite{HN,Ha6}, and our equivalences with derived or cluster categories give an interpretation in terms of finite dimensional algebras.

\medskip

Finally, we mention that the higher representation infiniteness of the algebra in \ref{in=3}(1) is obtained from a general result, which is of independent interest. In fact, we show that certain special idempotent quotients of higher representation infinite algebras stay higher representation infinite, of smaller dimension. 
\begin{Thm}[=\ref{-1RI}]\label{iRI}
Let $A$ be a $d$-representation infinite algebra with $d\geq1$. Let $e\in A$ be an idempotent satisfying the following.
\begin{itemize}
\item $eA(1-e)=0$.
\item $\id_A\sA<d$.
\item $eAe$ is a semisimple algebra.
\end{itemize}
Then the algebra $A/(e)$ is $(d-1)$-representation infinite.
\end{Thm}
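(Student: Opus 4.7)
The plan is to verify the two defining properties of $(d-1)$-representation infiniteness for $B := A/(e) = A/AeA$: namely $\gldim B \leq d-1$ and that $\nu_{B,d-1}^{-j}(B)$ has cohomology concentrated in degree zero for every $j \geq 0$.

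First I would exploit the structural consequence of $eA(1-e)=0$: writing $S := eAe$ and $M := (1-e)Ae$, the algebra becomes lower triangular
\[ A = \begin{pmatrix} S & 0 \\ M & B \end{pmatrix}, \]
from which the two-sided ideal $AeA$ coincides with the left ideal $Ae$, and $Ae$ is projective as a left $A$-module (it decomposes as $\bigoplus_i Ae_i$ over the primitive idempotents of $S$). Hence $0 \to Ae \to A \to B \to 0$ is a projective resolution of $B$ as a left $A$-module. Because any $X \in \mod B$ satisfies $Xe = 0$, we obtain $X \otimes_A Ae = Xe = 0$ and therefore $\Tor^A_{>0}(X, B) = 0$. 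This implies two things: (i) a projective $A$-resolution of $X \in \mod B$ descends via $- \otimes_A B$ to a projective $B$-resolution (so $\pd_B X = \pd_A X$), and (ii) by adjunction $\Hom_B(P \otimes_A B, Y) = \Hom_A(P, Y)$ one gets $\Ext_A^i(X,Y) \cong \Ext_B^i(X,Y)$ for all $X, Y \in \mod B$.

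For Step 1, the hypothesis $\id_A \sA < d$ yields $\Ext_A^d(-, \sA) = 0$. Writing any $Y \in \mod B$ as a cokernel via a short exact sequence $0 \to K \to \sA^{\oplus n} \to Y \to 0$ and using $\gldim A \leq d$ to kill $\Ext_A^{d+1}(-, K)$ propagates this vanishing to $\Ext_A^d(-, Y) = 0$ for every $Y \in \mod B$. Combined with the Ext-comparison (ii) this gives $\Ext_B^d(-,-) = 0$ on $\mod B$, hence $\gldim B \leq d-1$.

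For Step 2, the key tool is the triangle obtained by dualizing $0 \to Ae \to A \to B \to 0$ to $DB \to DA \to DAe$ and applying $X \lotimes_A -$ for $X \in \mod B$, yielding
\[ \nu_B(X) \to \nu_A(X) \to X \lotimes_A DAe \to \nu_B(X)[1]. \]
Since $DAe \cong eDA$ has left $A$-action factoring through $S$, the cohomology of the third term is supported on $S$; by the semisimplicity of $S$, these $S$-contributions are automatically concentrated in degree zero. Converting to the shifted Nakayama functors $\nu_{B,d-1} = \nu_B[-(d-1)]$ and $\nu_{A,d} = \nu_A[-d]$ (which produces the compensating shift needed to pass from dimension $d$ to $d-1$) and iterating, one expresses $\nu_{B,d-1}^{-j}(B)$ as a bounded iterated extension of $\nu_{A,d}^{-j}(A)$ (concentrated in degree zero by $d$-representation infiniteness of $A$) and $S$-supported corrections. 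The main obstacle is this bookkeeping: one must verify that the higher $\Tor^A$ contributions of the error terms do not accumulate into nonzero cohomological degrees after repeated application of $\nu_{B,d-1}^{-1}$. The injective-dimension bound $\id_A \sA < d$ is exactly the hypothesis that confines these corrections to the correct degree, keeping the iterates in $\mod B$.
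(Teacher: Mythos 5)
Your preliminary reductions and Step 1 are essentially correct: $eA(1-e)=0$ gives $AeA=Ae$, projective on the left, $-\lotimes_A\sA$ is a homological epimorphism, and the argument that $\id_AY\le d-1$ for all $Y\in\mod\sA$ together with $\Ext^i_{\sA}(X,Y)\cong\Ext^i_A(X,Y)$ on $\mod\sA$ yields $\gldim\sA\le d-1$ (only a minor slip: $\pd_{\sA}X=\pd_AX$ is not literally true, e.g.\ $\sA$ itself, but you only need the comparison of Ext groups). The genuine gap is in Step 2, at the sentence asserting that $X\lotimes_AD(Ae)$ is ``automatically concentrated in degree zero'' because the left $A$-action on $D(Ae)\cong eDA$ factors through the semisimple algebra $S=eAe$. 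Semisimplicity of $S$ makes $-\otimes_S-$ exact; it says nothing about $\Tor^A_{>0}(X,N)$ for an $A$-module $N$ inflated along $A\to eAe$. Concretely, for $A=\left(\begin{smallmatrix}k&0\\ k&k\end{smallmatrix}\right)$, $e=e_{11}$ and $X$ the simple right module with $Xe=0$, the left module $D(Ae)$ is two copies of the simple at $e$ and $X\lotimes_AD(Ae)\simeq k^2[1]$, not concentrated in degree $0$, even though the action factors through $eAe=k$. (That $A$ is not $d$-representation infinite; the point is that the principle you invoke is false in general, and you give no argument that the hypotheses of the theorem restore it at every iterate. Indeed, in the correct argument the correction terms, after applying $\nu_d^{-1}$, sit in degree $-1$, and it is precisely this shift that drives the induction.) The actual role of the semisimplicity of $eAe$ is different: it guarantees that $X\lotimes_AAeA=Xe\otimes_{eAe}eA$ is an honest module lying in $\add eA$, so the error terms are projective $A$-modules whose $\nu_d^{-1}$-orbits stay in $\mod A$.

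Relatedly, the part you defer as ``bookkeeping'' is not bookkeeping but the substance of the proof, and your sketch supplies no mechanism for it. What is needed (and what the paper does) is: (a) the comparison $\snu^{-1}X=(\nu^{-1}X)\lotimes_A\sA$ for $X\in\Db(\mod\sA)$, so that $\snu_{d-1}^{-i}\sA$ is obtained from $\nu_{d-1}^{-1}(\snu_{d-1}^{-i+1}\sA)$ by factoring out its trace $XeA\in\add eA$; and (b) an induction showing $\nu_{d-1}^{-1}(\snu_{d-1}^{-i+1}\sA)$ lies in the resolving subcategory $\mathcal{Q}=\{X\in\mod A\mid\nu_d^{-l}X\in\mod A\ \text{for all } l\ge0\}$, carried out by applying $\nu_d^{-1}$ to the short exact sequence $0\to P_i\to\nu_{d-1}^{-1}(\snu_{d-1}^{-i+1}\sA)\to\snu_{d-1}^{-i}\sA\to0$ with $P_i\in\add eA$ and using $\id_A\sA<d$ (plus the degree-$0$ concentration of the first two terms) to force $\nu_d^{-1}(\snu_{d-1}^{-i}\sA)$ into degree $-1$, which produces the next such sequence. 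Note also that the statement concerns the inverse functors $\snu_{d-1}^{-j}$, whereas your triangle expresses $\nu_{\sA}$ in terms of $\nu_A$; passing to inverses or dualizing is possible but is again part of the missing work. As written, the central assertion that $\snu_{d-1}^{-j}\sA\in\mod\sA$ for all $j\ge0$ is not established.
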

A typical example is the Beilinson algebras. We deduce from this result that ``truncated'' Beilinson algebras are higher representation infinite (\ref{trunc}).

\subsection*{Acknowledgements}
The author would like to thank Martin Kalck and Osamu Iyama for valuable discussions.

\section{Reduction of higher representation infinite algebras}\label{red}
Let us start with the following notion which is one of main objects of this paper. For each $n\in\Z$ we denote by $\nu_n=-\lotimes_ADA[-n]$ the shifted Serre functor, and its inverse $\nu_n^{-1}=\RHom_A(DA,-)[n]$ on $\Db(\mod A)$.
\begin{Def}[\cite{HIO}]
Let $n\geq0$. A finite dimensional algebra $A$ is called {\it $n$-representation infinite} if $\gldim A\leq n$ and
\[ \nu_n^{-i}A\in\mod A \subset \Db(\mod A) \]
for all $i\geq0$.
\end{Def}
Let $A$ be a $d$-representation infinite algebra, and let $1\neq e\in A$ be an idempotent. The aim of this section is to prove that certain quotient $\sA=A/(e)$ is still higher representation infinite.

Our (rather strong) assumption on the idempotent is the following.
\begin{Assum}\label{ass}
%We assume $A$ satifies either of the following.
\begin{enumerate}
	\item\label{str} $eA(1-e)=0$.
%	\item $e$ is a statifying idempotent.
	\item $\id_A\sA<d$.
	\item\label{wtw} $eAe$ is a semisimple algebra.
	%	\item $AeA\in\mathcal{Q}$.
	%	\item For any preprojective module $X\in\md A$, we have $X\otimes_AAeA=XeA=Xe\in\add AeA=\add eA$.
\end{enumerate}
\end{Assum}

This ensures that the quotient algebra $A/(e)$ remains to be higher representation infinite.
\begin{Thm}\label{-1RI}
Let $A$ be a $d$-representation infinite algebra with $d\geq1$. Under the assumptions in \ref{ass}, the algebra $A/(e)$ is $(d-1)$-representation infinite.
\end{Thm}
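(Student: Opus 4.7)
The plan is to verify the two defining properties of $(d-1)$-representation infiniteness for $\sA = A/(e)$: the global dimension bound $\gldim\sA \leq d-1$, and the module condition $\nu_{d-1,\sA}^{-i}\sA \in \mod\sA$ for all $i \geq 0$.

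For the global dimension, I would first exploit assumptions (1) and (3) to analyze $(e) = AeA$. The relation $eA = eAe$ forces $(e) = Ae$ as a right ideal, and since $\Lambda = eAe$ is semisimple, $Ae = \bigoplus e_iA$ is a direct sum of simple projective $A$-modules (for $e_i$ primitive in $\Lambda$). In particular $\pd_A\sA \leq 1$ via the SES $0 \to Ae \to A \to \sA \to 0$. A parallel computation using $0 \to f_jB \to f_jA \to f_j\sA \to 0$ with $B = fAe$, together with $\Hom_A(e_iA, N) = Ne_i = 0$ for $N \in \mod\sA$, gives $\Ext_A^{\geq 1}(f_j\sA, N) = 0$, whence a natural Ext-isomorphism $\Ext_\sA^i(M, N) \cong \Ext_A^i(M, N)$ for $M, N \in \mod\sA$, and in particular $\id_A M = \id_\sA M$ for such $M$. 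The crucial step is then to bound $\id_A S$ for a simple $\sA$-module $S$: taking the SES $0 \to K \to P \to S \to 0$ with $P = f_j\sA$ the $\sA$-projective cover (so $K \in \mod\sA$), the standard inequality $\id_A S \leq \max(\id_A P,\ \id_A K - 1)$ combined with $\id_A P \leq \id_A\sA \leq d-1$ (assumption (2)) and $\id_A K \leq \gldim A = d$ yields $\id_A S \leq d-1$. Induction on composition length extends this to all $M \in \mod\sA$, so $\gldim\sA \leq d-1$.

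For the module condition, I would first treat $i=1$, which amounts to $\Ext_\sA^j(D\sA, \sA) = 0$ for $j \neq d-1$. Identify $D\sA = D({}_AAf)$ as the direct sum of indecomposable $A$-injectives $I_\alpha$ whose socles are the simple $\sA$-modules; each $I_\alpha$ lies in $\mod\sA$ because $eAf = 0$ forces $I_\alpha \cdot e = 0$. By the Ext-iso, $\Ext_\sA^j(I_\alpha, \sA) = \Ext_A^j(I_\alpha, \sA)$, which I compute via the long exact sequence of $\RHom_A(I_\alpha, -)$ applied to $0 \to Ae \to A \to \sA \to 0$. The $d$-representation infiniteness of $A$ gives $\Ext_A^k(I_\alpha, A) = \Ext_A^k(I_\alpha, Ae) = 0$ for $k \neq d$ (the latter since $Ae$ is a sum of summands of $A$), while assumption (2) gives $\Ext_A^k(I_\alpha, \sA) = 0$ for $k \geq d$. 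Combining all degrees forces $\Ext_A^j(I_\alpha, \sA) = 0$ for $j \neq d-1$, as needed.

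For $i \geq 2$ the plan is to iterate, setting $M_i := \nu_{d-1,\sA}^{-i}\sA$ and arguing by induction that $\nu_{d-1,\sA}^{-1}(M_{i-1}) \in \mod\sA$ via the same long-exact-sequence pattern. The main obstacle is that the preceding argument crucially used $A$'s $d$-rep-inf condition applied to $A$ itself (through the summands $e_iA$ of $Ae$), and for a general $M_{i-1}$ an analogous control is required. The cleanest resolution is to establish a compatibility between the Serre functors $\nu_\sA$ and $\nu_A$ under the embedding $\mod\sA \hookrightarrow \mod A$, exploiting that $(e)$ is an idempotent ideal (one easily checks $(e)^2 = (e)$) together with $\pd_A\sA \leq 1$, so that $\nu_{d-1,\sA}^{-i}\sA$ can be tracked by the known modules $\nu_{d,A}^{-i}A \in \mod A$ supplied by the hypothesis on $A$. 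Establishing this Serre-functor compatibility, and hence completing the iteration, is the principal technical hurdle of the proof.
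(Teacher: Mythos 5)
The parts you actually complete are correct. Since $eA(1-e)=0$ gives $AeA=Ae$, which is projective on both sides and lies in $\add eA$ as a right module (with each $e_iA$ simple projective, by semisimplicity of $eAe$), the surjection $A\to\sA$ is a homological epimorphism, and your long-exact-sequence arguments do yield $\gldim\sA\le d-1$ (a point the paper does not spell out) and $\snu_{d-1}^{-1}\sA\in\mod\sA$, i.e.\ the case $i=1$. (Your ``$Ae=\bigoplus e_iA$'' is sloppy about sides, but the intended statement $AeA\in\add eA$ as right modules is what you use and is true.) However, the theorem requires $\snu_{d-1}^{-i}\sA\in\mod\sA$ for \emph{all} $i\ge0$, and for $i\ge2$ you offer only a plan which you yourself flag as unresolved. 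This is a genuine gap, and it is exactly where the content of the proof lies: your $i=1$ computation rests on the vanishing $\Ext_A^k(DA,A)=0$ for $k\ne d$ supplied by $d$-representation infiniteness of $A$, but iterating the same pattern would require analogous control of extensions into $\snu_{d-1}^{-(i-1)}\sA$, and this module is not a summand of, nor directly ``tracked by'', $\nu_d^{-(i-1)}A$; the hypothesis on $A$ says nothing about it without an additional mechanism.

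The compatibility you hope for is precisely \ref{bar}: $\snu^{-1}X=(\nu^{-1}X)\lotimes_A\sA$ for $X\in\Db(\mod\sA)$, which holds by \cite[3.10(c)]{Ke11} thanks to \ref{ass}(\ref{str}). But that lemma alone does not close the induction. The paper combines it with the subcategory $\mathcal{Q}=\{X\in\mod A\mid\nu_d^{-l}X\in\mod A\text{ for all }l\ge0\}$ of \ref{DefQ}, which is resolving and closed under $\nu_d^{-1}$ (\ref{LemQ}), and proves by induction that $X_i:=\nu_{d-1}^{-1}(\snu_{d-1}^{-i+1}\sA)$ lies in $\mathcal{Q}$; then $\snu_{d-1}^{-i}\sA=X_i\lotimes_A\sA=X_i/X_ieA$ is a module by \ref{easy}(\ref{fac}), giving a short exact sequence $0\to P_i\to X_i\to\snu_{d-1}^{-i}\sA\to0$ with $P_i\in\add eA$. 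Applying $\nu_d^{-1}$ to it, using that every module in $\mod\sA$ has $\id_A<d$ by \ref{ass}(2) (so $\nu_d^{-1}(\snu_{d-1}^{-i}\sA)$ is concentrated in degree $-1$) while the other two terms are in $\mathcal{Q}$, one obtains an exact sequence exhibiting $X_{i+1}=\nu_{d-1}^{-1}(\snu_{d-1}^{-i}\sA)$ as the kernel of an epimorphism between objects of $\mathcal{Q}$, hence in $\mathcal{Q}$. Some such bookkeeping of $\nu_d^{-1}$-stability carried along the induction (or an equivalent device) is indispensable; as it stands, your proposal establishes the global dimension bound and the case $i\le1$ only, not the theorem.
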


We denote by $H^0\circ\nu_n^{-1}=\tau_n^{-1}$ on $\mod A$, the (inverse) $n$-Auslander-Reiten translation.
Let us introduce a subcategory $\mathcal{Q}$ of $\mod A$.
\begin{Def}\label{DefQ}
	Let $A$ be a finite dimensional algebra and $d\geq1$. We set
	\[ \mathcal{Q}=\{ X\in\mod A\mid \Ext_A^i(DA,\tau_d^{-l}X)=0 \text{ for all } i\neq d \text{ and } l\geq0 \}. \]
\end{Def}
%\[ \mathcal{Q}_0=\{ X\in\md A\mid \text{ There is an exact sequence } 0\to X\to P^0\to\cdots\to P^m\to0 \text{ for some $m\geq0$ with } P^i\in \mathcal{P}. \} \]
We have the following reformulation of the definition of $\cQ$ in terms of derived categories.
\begin{Lem}\label{LemQ}
	Let $A$ be a finite dimensional algebra of global dimension $\leq d$.
	\begin{enumerate}
		\item We have $\mathcal{Q}=\{X\in\mod A\mid \nu_d^{-l}X \in \mod A \text{ for all } l\geq0 \}$. In particular, $\mathcal{Q}$ is closed under $\nu_d^{-1}$.
		\item If $A$ is $d$-representation infinite, then $\mathcal{Q}$ is a resolving subcategory of $\mod A$ containing the preprojective modules.
	\end{enumerate}
\end{Lem}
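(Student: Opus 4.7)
The plan is to establish (1) first, obtaining a derived-category reformulation of $\cQ$, and then to deduce (2) by combining this reformulation with the long exact sequence of $\Ext^*_A(DA,-)$. For (1), the key computation is $H^j(\RHom_A(DA,M)[d]) = \Ext^{j+d}_A(DA,M)$, which gives $\nu_d^{-1}M \in \mod A$ if and only if $\Ext^i_A(DA, M) = 0$ for all $i \neq d$, and in this case $\nu_d^{-1}M = H^0(\nu_d^{-1}M) = \tau_d^{-1}M$. An easy induction on $l$ then yields $\nu_d^{-l}X = \tau_d^{-l}X \in \mod A$ as soon as one of the two conditions in (1) is satisfied, showing that they are equivalent. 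Closure of $\cQ$ under $\nu_d^{-1}$ is then immediate, since $\nu_d^{-l}(\nu_d^{-1}X) = \nu_d^{-(l+1)}X \in \mod A$ whenever $X \in \cQ$.

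For (2), membership of the preprojective modules in $\cQ$ is immediate from $\nu_d^{-m}(\tau_d^{-l}A) = \nu_d^{-(m+l)}A \in \mod A$, which holds because $A$ is $d$-representation infinite; in particular $A \in \cQ$. Closure under direct summands is built into the definition.

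For closure under extensions and kernels of epimorphisms, given a short exact sequence $0 \to X \to Y \to Z \to 0$, I would prove by induction on $l$ the simultaneous statement that applying $\nu_d^{-l}$ yields a short exact sequence of modules $0 \to \nu_d^{-l}X \to \nu_d^{-l}Y \to \nu_d^{-l}Z \to 0$. The inductive step analyses the long exact sequence
\[ \cdots \to \Ext^{i-1}_A(DA,\nu_d^{-l}Z) \to \Ext^i_A(DA, \nu_d^{-l}X) \to \Ext^i_A(DA, \nu_d^{-l}Y) \to \Ext^i_A(DA, \nu_d^{-l}Z) \to \Ext^{i+1}_A(DA, \nu_d^{-l}X) \to \cdots, \]
using closure of $\cQ$ under $\nu_d^{-1}$, so that the two terms among $X, Y, Z$ assumed in $\cQ$ have $\Ext^*_A(DA, -)$ concentrated in degree $d$ after applying $\nu_d^{-l}$. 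In the extension case ($X, Z \in \cQ$) both sides of $\Ext^i_A(DA, \nu_d^{-l}Y)$ vanish for $i \neq d$, forcing $\nu_d^{-(l+1)}Y \in \mod A$. In the kernel case ($Y, Z \in \cQ$) only $\Ext^i_A(DA, \nu_d^{-l}X) = 0$ for $i \neq d, d+1$ comes for free, and the hypothesis $\gldim A \le d$ kills the remaining $\Ext^{d+1}$. In either case the triangle obtained by applying $\nu_d^{-1}$ has all three vertices in $\mod A$, so it comes from an honest short exact sequence, closing the induction. Membership of the appropriate module in $\cQ$ then follows from the characterization in (1). The one point to watch is precisely this iteration in the kernel case, which requires carrying the whole short exact sequence rather than just the single module $X$ through the induction.
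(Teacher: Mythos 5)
Your proposal is correct and follows essentially the same route as the paper: part (1) is exactly the paper's computation that $\nu_d^{-1}Y\in\mod A$ iff $\Ext^i_A(DA,Y)=0$ for $i\neq d$ (with $\nu_d^{-1}Y=\tau_d^{-1}Y$ in that case) followed by induction on $l$. The paper dismisses (2) as ``straightforward,'' and your inductive argument carrying the whole short exact sequence through $\nu_d^{-l}$, using the long exact sequence of $\Ext^*_A(DA,-)$ and $\gldim A\leq d$ to kill the stray $\Ext^{d+1}$ in the kernel case, is a correct and complete way to fill in that claim.
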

\begin{proof}
	(1)  For each $Y\in\mod A$, we have $\Ext^i_A(DA,Y)=0$ for $i\neq d$ if and only if $\nu_d^{-1}Y\in\mod A$, and in this case $\Ext^d_A(DA,Y)=\nu_d^{-1}Y=\tau_d^{-1}Y$. Then the assertion easily follows by induction.
	
	(2)  Straightforward.
\end{proof}

Let us collect some easy consequences of \ref{ass} which will be frequently used.
\begin{Lem}\label{easy}
	\begin{enumerate}
		\item The functor $\Db(\mod\sA)\to\Db(\mod A)$ is fully faithful.
		\item $AeA\simeq Ae$ in $\mod A^\op$.
		\item\label{fac} For any $X\in\mod A$, the triangle $X\lotimes_AAeA\to X \to X\lotimes_A\sA$ in $\Db(\mod A)$ is isomorphic to the one coming from the exact sequence $0\to XeA \to X \to X/XeA \to0$ in $\mod A$.
	\end{enumerate}
\end{Lem}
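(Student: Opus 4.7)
All three parts rest on rewriting assumption \ref{ass}(1) as the identity $ea=eae$ for every $a\in A$, i.e., $eA=eAe$; I would record this observation at the outset.

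For (2), any generator $xey$ of $AeA$ can be rewritten via $ey=eye$ as
\[ xey=xeye=(xey)e\in Ae, \]
giving $AeA\subseteq Ae$; the reverse inclusion is trivial, so $AeA=Ae$. In particular $Ae$ is two-sided, and as a left $A$-module it is a direct summand of the regular module via $A=Ae\oplus A(1-e)$, so the sequence $0\to Ae\to A\to\sA\to 0$ is a projective resolution of $\sA$ in $\mod A$ of length one. I will use this resolution for the next two parts.

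For (1), I would invoke the standard criterion that the derived restriction functor $\Db(\mod\sA)\to\Db(\mod A)$ is fully faithful if and only if $\sA\lotimes_A\sA\simeq\sA$. Using the length-one projective resolution above, this amounts to the underived isomorphism $\sA\otimes_A\sA\simeq\sA$ together with $\Tor_1^A(\sA,\sA)=0$, and both follow from the single observation $\sA e=0$: indeed for any $a+Ae\in\sA$ we have $(a+Ae)\cdot e=ae+Ae=0$ since $ae\in Ae$. Thus the kernel of $\sA e\to\sA$ (the derived Tor) vanishes and the cokernel computes $\sA\otimes_A\sA=\sA$.

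For (3), I would apply $X\lotimes_A-$ to the triangle $AeA\to A\to\sA\to$. By (2), $AeA=Ae$ is projective as a left $A$-module, so $X\lotimes_A AeA\simeq X\otimes_A Ae\cong Xe$ and the induced map to $X$ is the obvious inclusion. The only remaining task is to identify $Xe$ with the submodule $XeA\subseteq X$: the containment $Xe\subseteq XeA$ is immediate, and conversely for any $x\in X$ and $a\in A$, the identity $ea=eae$ gives $x\cdot ea=x\cdot eae=(xea)\cdot e\in Xe$, so $XeA\subseteq Xe$. Passing to cokernels then identifies $X\lotimes_A\sA$ with $X/XeA$, producing the desired isomorphism of triangles. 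The only subtle point in the entire lemma is this last identification $Xe=XeA$, which requires the ``absorption'' $ea=eae$; otherwise everything is formal once the resolution $0\to Ae\to A\to\sA\to 0$ is in hand.
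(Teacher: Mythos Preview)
Your proof is correct and follows essentially the same path as the paper. The paper's own argument is very terse—parts (1) and (2) are dismissed in one line as ``follow from \ref{ass}(\ref{str})'', and (3) is the single sentence ``by (2) we have $X\lotimes_AAeA=X\otimes_AAe=Xe$ which is same as $XeA$''—so your write-up simply supplies the details the paper omits. In particular, your explicit use of the criterion $\sA\lotimes_A\sA\simeq\sA$ for full faithfulness in (1), together with the length-one resolution $0\to Ae\to A\to\sA\to0$, makes transparent what the paper leaves implicit; and your verification $Xe=XeA$ via the absorption identity $ea=eae$ is exactly the content behind the paper's phrase ``which is same as $XeA$''.
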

\begin{proof}
	The assertions (1)(2) follow from \ref{ass}(\ref{str}), so we prove (\ref{fac}). By (2) we have $X\lotimes_AAeA=X\otimes_AAe=Xe$ which is same as $XeA$. Therefore the morphism $X\lotimes_AAeA\to X$ in $\Db(\mod A)$ is isomorphic to the inclusion $XeA\to X$ in $\mod A$, hence we get the assertion.
\end{proof}

\begin{Lem}
	For any $X\in\mod A$, we have $X\lotimes_AAeA\in\add eA\subset\mathcal{Q}$.
\end{Lem}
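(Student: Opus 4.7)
My plan is first to identify $X\lotimes_A AeA$ concretely as a right $A$-module, and then to exploit the semisimplicity of $eAe$. By \ref{easy}(\ref{fac}), $X\lotimes_A AeA$ is concentrated in degree zero and isomorphic to $Xe$ as a right $A$-module, so the goal reduces to showing $Xe\in\add eA$.

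The key observation is that $Xe$, being a finite-dimensional right $eAe$-module, is naturally a direct summand of $(eAe)^{\oplus N}$ for some $N$, because $eAe$ is semisimple by \ref{ass}(\ref{wtw}). Applying the exact functor $-\otimes_{eAe}eA\colon \mod eAe\to\mod A$ then yields that $Xe\otimes_{eAe}eA$ is a direct summand of $(eA)^{\oplus N}$ in $\mod A$. What remains is to identify $Xe\otimes_{eAe}eA$ with $Xe$ as right $A$-modules; this follows from the observation that $Xe$ is annihilated by $1-e$ on the right (since $e(1-e)=0$), so that the right $A$-action on $Xe$ factors through $a\mapsto eae$, meaning $Xe$ is canonically induced from its underlying $eAe$-structure. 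This gives $X\lotimes_A AeA\in\add eA$. The remaining inclusion $\add eA\subseteq\cQ$ is immediate from \ref{LemQ}(2), since $eA$ is projective and hence preprojective.

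The one step that requires a moment's care is the identification $Xe\otimes_{eAe}eA\simeq Xe$ of right $A$-modules; once this is in place, the proof is routine, with semisimplicity of $eAe$ propagating through the exact functor $-\otimes_{eAe}eA$ by preservation of direct summands.
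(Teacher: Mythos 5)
Your overall route is the paper's: identify $X\lotimes_AAeA$ with the honest module $Xe\simeq Xe\otimes_{eAe}eA$ and let the semisimplicity assumption \ref{ass}(\ref{wtw}) exhibit it as a direct summand of $(eA)^{\oplus N}$; the inclusion $\add eA\subset\cQ$ via \ref{LemQ} is also as in the paper. The problem lies in the justification of the very step you flag as the delicate one, namely the isomorphism $Xe\otimes_{eAe}eA\simeq Xe$ of right $A$-modules. You deduce it from the facts that $Xe\cdot(1-e)=0$ (a tautology from $e(1-e)=0$) and that every $a\in A$ then acts on $Xe$ as $eae$ does. These two facts do not imply the claim. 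For example, let $A$ be the algebra of upper triangular $2\times 2$ matrices over $k$, let $e=e_{11}$, and let $M$ be the one-dimensional simple right $A$-module given as the simple quotient of $e_{11}A$ (note $M=Me$, so $M$ is even of the form $Xe$ with $X=M$). Then $M(1-e)=0$, every $a\in A$ acts on $M$ as $eae$ does, and $eAe\simeq k$ is semisimple, yet $M\otimes_{eAe}eA\simeq e_{11}A$ is two-dimensional; the natural map $M\otimes_{eAe}eA\to M$ is surjective but not injective. So ``the action factors through $a\mapsto eae$'' does not by itself mean that the module is induced from its $eAe$-structure.

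What makes the identification true in the present setting is the assumption \ref{ass}(\ref{str}), which you never invoke at this point: $eA(1-e)=0$ says precisely that $eA=eAe$ as an $(eAe,A)$-bimodule, whence $Xe\otimes_{eAe}eA=Xe\otimes_{eAe}eAe\simeq Xe$; injectivity of the multiplication map holds because every element $ea=eae$ of $eA$ can be moved across the tensor sign, so each element of the tensor product can be written as $m\otimes e$. (This is also how the paper argues, writing $X\lotimes_AAeA=Xe\otimes_{eAe}eA$ via \ref{easy}, whose proof already consumes \ref{ass}(\ref{str}).) With this one-line correction your argument is complete and coincides with the paper's proof.
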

\begin{proof}
	We have seen in \ref{easy}(\ref{fac}) that $X\lotimes_AAeA=Xe\lotimes_{eAe}eA=Xe\otimes_{eAe}eA\in\mod A$. By \ref{ass}(\ref{wtw}), we deduce that this is in $\add eA$.
	%Since $\nu_d^{-l}(X\otimes^L_AAeA)=\nu_d^{-l}(Xe\otimes^L_{eAe}eA)=Xe\otimes^L_{eAe}\nu_d^{-l}(eA)$ and by Assumption \ref{ass}(\ref{wtw}), we deduce that this is preprojective, hence in $\mathcal{Q}$.
\end{proof}

We denote by $\nu$ (resp. $\snu$) the Serre functor on $\Db(\mod A)$ (resp. on $\Db(\mod\sA)$). We have the following behavior of these Serre functors under the inclusion $\Db(\mod\sA)\hookrightarrow\Db(\mod A)$.
\begin{Lem}\label{bar}
	For each $X\in\Db(\mod\sA)$, we have $\snu^{-1}X=(\nu^{-1}X)\lotimes_A\sA$.
\end{Lem}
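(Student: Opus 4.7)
The plan is to prove the formula by a Yoneda-type argument, combining Serre duality on both categories with the fully faithful embedding of $\Db(\mod\sA)$ into $\Db(\mod A)$ established in \ref{easy}(1) and with the adjunction between restriction along $A \twoheadrightarrow \sA$ and the derived tensor product with $\sA$.

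First I would recall that the assumption $eA(1-e)=0$ together with \ref{easy} make $\sA$ into a quotient algebra such that the inclusion $i_*\colon\Db(\mod\sA)\hookrightarrow\Db(\mod A)$ is fully faithful and admits $-\lotimes_A\sA\colon\Db(\mod A)\to\Db(\mod\sA)$ as a left adjoint: for $Z\in\Db(\mod A)$ and $Y\in\Db(\mod\sA)$ one has a natural isomorphism
\[ \Hom_{\Db(\mod A)}(Z,Y)\cong\Hom_{\Db(\mod\sA)}(Z\lotimes_A\sA,Y). \]
This is the usual recollement-type adjunction attached to an idempotent quotient, and under \ref{ass} it holds on the bounded derived category.

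Next, for $X\in\Db(\mod\sA)$ I would establish the claimed formula by proving that $(\nu^{-1}X)\lotimes_A\sA$ represents the same functor on $\Db(\mod\sA)$ as $\snu^{-1}X$. Concretely, for an arbitrary test object $Y\in\Db(\mod\sA)$, Serre duality on $\Db(\mod\sA)$ gives
\[ \Hom_{\Db(\mod\sA)}(\snu^{-1}X,Y)\cong D\Hom_{\Db(\mod\sA)}(Y,X), \]
full faithfulness of $i_*$ identifies the right-hand side with $D\Hom_{\Db(\mod A)}(Y,X)$, Serre duality on $\Db(\mod A)$ turns this into $\Hom_{\Db(\mod A)}(\nu^{-1}X,Y)$, and finally the adjunction above rewrites it as $\Hom_{\Db(\mod\sA)}((\nu^{-1}X)\lotimes_A\sA,Y)$. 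Yoneda then yields the desired isomorphism $\snu^{-1}X\cong(\nu^{-1}X)\lotimes_A\sA$.

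The argument is mostly formal, so I do not anticipate a serious obstacle. The only delicate point is that Serre duality on $\Db(\mod A)$ requires $A$ to have finite global dimension (so that $\nu^{-1}X$ lives in the bounded derived category), which is available since $A$ is $d$-representation infinite; and one must check that the adjunction $-\lotimes_A\sA\dashv i_*$ is compatible with unbounded objects $\nu^{-1}X$, which is ensured by \ref{easy} since $AeA=Ae$ is a projective right $A$-module and hence $-\lotimes_A\sA$ is bounded on $\Db(\mod A)$.
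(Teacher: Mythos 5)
Your proof is correct, but it takes a genuinely different route from the paper's. The paper argues at the bimodule level, following Keller: it first identifies the inverse Serre (Nakayama) bimodule of $\sA$ as $\RHom_{\sA}(D\sA,\sA)\simeq\sA\lotimes_A\RHom_A(DA,A)\lotimes_A\sA$, using the general formula $Y\lotimes_A\RHom_A(M,N)\lotimes_AX\simeq\RHom_A(\RHom_{A^\op}(X,M),Y\lotimes_AN)$, and then computes $\snu^{-1}X=X\lotimes_{\sA}\RHom_{\sA}(D\sA,\sA)=(\nu^{-1}X)\lotimes_A\sA$ directly; this needs only the Nakayama-type functors and yields a canonical, bimodule-level isomorphism. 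You instead corepresent both sides on $\Db(\mod\sA)$, chaining Serre duality in $\Db(\mod\sA)$, full faithfulness of the inclusion from \ref{easy}(1), Serre duality in $\Db(\mod A)$, and the adjunction between $-\lotimes_A\sA$ and restriction, then conclude by Yoneda; this is perfectly adequate for how \ref{bar} is used in the proof of \ref{-1RI}, where only object-wise isomorphisms are needed, and it isolates the formal inputs nicely, at the cost of producing the isomorphism only up to the non-canonicity inherent in a representability argument. Two points you should make explicit: Serre duality on $\Db(\mod\sA)$ presupposes $\gldim\sA<\infty$, which is not among the assumptions in \ref{ass} but follows immediately from the full faithfulness you already invoke, since then $\Ext^i_{\sA}(X,Y)\cong\Ext^i_A(X,Y)=0$ for $i>d$ and all $X,Y\in\mod\sA$; and the boundedness of $-\lotimes_A\sA$ on $\Db(\mod A)$ comes from $AeA=Ae$ being projective as a \emph{left} $A$-module, so that $0\to Ae\to A\to\sA\to0$ gives $\sA$ projective dimension at most one on that side --- your phrase ``projective right $A$-module'' is a harmless chirality slip (the right-module analogue in fact also holds here, using the semisimplicity of $eAe$, but it is the left-module statement that makes the tensor functor bounded).
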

\begin{proof}
	This is nothing but \cite[3.10(c)]{Ke11}. For the convenience of the reader, we include the proof. Note first that $\RHom_{\sA}(D\sA,\sA)=\sA\lotimes_A\RHom_A(DA,A)\lotimes_A\sA$. Indeed, this is a consequence of the formula $Y\lotimes_A\RHom_A(M,N)\lotimes_AX\ysimeq\RHom_A(\RHom_{A^\op}(X,M),Y\lotimes_AN)$. %for $M,N\in\rD(A^e)$ and $Y\in\per A$, $X\in\per A^\op$.
	Then it follows that we have $\snu^{-1}X=X\lotimes_{\sA}\RHom_{\sA}(D\sA,\sA)=X\lotimes_A\RHom_A(DA,A)\lotimes_A\sA=(\nu^{-1}X)\lotimes_A\sA$.
\end{proof}

Now we are ready to prove our result.
\begin{proof}[Proof of \ref{-1RI}]
	We show by induction on $i$ that $\nu_{d-1}^{-1}(\snu_{d-1}^{-i+1}\sA)\in\mathcal{Q} \subset \md A$ for all $i\geq0$, where we formally understand this as $A\in\mathcal{Q}$ for $i=0$.
	
	Let us first note that this claim will yield our desired result. Indeed, we know by \ref{bar} that $\snu_{d-1}^{-i}\sA=\nu_{d-1}^{-1}(\snu_{d-1}^{-i+1}\sA)\lotimes_A\sA$, and by \ref{easy}(\ref{fac}) that applying $-\lotimes_A\sA$ to an $A$-module is nothing but factoring out the trace of $eA$. Therefore the quotient module lies in $\md\sA$.
	
	Now we prove our claim. It is trivial for $i=0$. Suppose we have the assertion for $i$, so there exists an exact sequence
	\[ \xymatrix{ 0\ar[r]& P_i\ar[r]&\nu_{d-1}^{-1}(\snu_{d-1}^{-i+1}\sA)\ar[r]& \snu_{d-1}^{-i}\sA \ar[r]& 0} \]
	in $\md A$ with $P_i\in\add eA$, where we replace the middle term by $A$ for $i=0$. Now we apply $\nu_d^{-1}=\RHom_A(DA,-)[d]$ to this exact sequence. Since the first two terms are in $\mathcal{Q}$, the resulting complexes concentrate in degree $0$. On the other hand, since the injective dimension of $\sA$ over $A$ is $<d$, so is the $\sA$-module $\snu_{d-1}^{-i}\sA$. Then the complex $\nu_d^{-1}(\snu_{d-1}^{-i}\sA)$ concentrates in degree $\leq-1$, hence in degree $-1$. Therefore we obtain an exact sequence
	\[ \xymatrix{ 0\ar[r]& \nu_{d-1}^{-1}(\snu_{d-1}^{-i}\sA)\ar[r]& \nu_d^{-1}P_i\ar[r]&\nu_d^{-1}\nu_{d-1}^{-1}(\snu_{d-1}^{-i+1}\sA)\ar[r]&0 }. \]
	Now the middle term is preprojective and the last term is in $\mathcal{Q}$ by induction hypothesis and \ref{LemQ}, we deduce that the first term is in $\mathcal{Q}$, completing the induction step.
\end{proof}

\begin{Ex}\label{trunc}
For each $d\geq1$, let $A$ be the $d$-Beilinson algebra which is presented by the following quiver with $(d+1)$-fold arrows and the commutativity relations.
\[ \xymatrix{ 0\ar@3[r]&1\ar@3[r]&\cdots\ar@3[r]&d-1\ar@3[r]&d } \]
It is a fundamental example of a $d$-representation infinite algebra \cite[2.15]{HIO}.

(1)  Let $e$ be the idempotent of $A$ corresponding to the vertex $0$. It is easy to see that it satisfies the assumptions in \ref{ass}. We see by \ref{-1RI} that the truncated Beilinson algebra
\[ \xymatrix{ 1\ar@3[r]&\cdots\ar@3[r]&d-1\ar@3[r]&d } \]
is $(d-1)$-representation infinite.

(2)  We can iterate this process to deduce that for each $0\leq n\leq d$, the algebra
\[ \xymatrix{ 0\ar@3[r]&1\ar@3[r]&\cdots\ar@3[r]&n-1\ar@3[r]&n } \]
with $(d+1)$-fold arrows and the commutativity relations is $n$-representation infinite. (Note that the vertices are renumbered.)
\end{Ex}

The following example shows that we cannot drop a somewhat strong assumption \ref{ass}(\ref{wtw})
\begin{Ex}
	%We construct a $3$-representation infinite algebra as the degree $0$ part of a graded $4$-Calabi-Yau algebra of Gorenstein parameter $1$ \cite{MM,HIO,AIR}.
	Let $\Ga$ be the skew group algebra $k[x,y,z,w]\ast G$ for the cyclic subgroup $G$ generated by $\diag(\zeta,\zeta,\zeta,\zeta^2)\in\SL_4(k)$ where $\zeta$ is a primitive $5$-th root of unity and $\ch k\neq5$.
	Then $\Ga$ is presented by the McKay quiver of $G$ with commutativity relations as in the leftmost figure below.
	We give a grading on $\Ga$ by declaring the arrows $i\to j$ with $i>j$ to have degree $1$, and degree $0$ otherwise. Then $\Ga$ is a $4$-Calabi-Yau algebra of Gorenstein parameter $1$ by \cite[5.6]{AIR}, and hence its degree $0$ part $A$ which is presented by the following quiver in the middle with commutativity relations, is $3$-representation infinite.
	\[
	\xymatrix@C=2.5mm@R=7.5mm{
		&& 1\ar@3[drr]\ar[ddr]&&\\
		5\ar@3[urr]\ar[rrrr]&&&&2\ar@3[dl]\ar[dlll]\\
		&4\ar@3[ul]\ar[uur]&&3\ar@3[ll]\ar[ulll]&, }\qquad\qquad
	\xymatrix@C=2.5mm@R=7.5mm{
		&& 1\ar@3[drr]\ar[ddr]&&\\
		5&&&&2\ar@3[dl]\ar[dlll]\\
		&4\ar@3[ul]&&3\ar@3[ll]\ar[ulll]&, }\qquad\qquad
	\xymatrix@C=2.5mm@R=7.5mm{
		&&&&\\
		5&&&&\\
		&4\ar@3[ul]&&3\ar@3[ll]\ar[ulll]&. } \]
	It is easily verified that the injective dimensions of the simple $A$-modules are $3, 3, 2, 1, 0$. Take $e=e_1+e_2$. Then $\sA=A/(e)$, which is presented by the quiver on the right, is not $2$-representation infinite. Indeed, $\Ext^1_{\sA}(D{\sA},{\sA})\supset\Ext^1_{\sA}(I_5,P_3)=\Ext^1_{\sA}(S_5,S_3)\neq0$.
\end{Ex}
%\begin{Qs}
%What is the effect on preprojective algebras, or CY algebras? Is it simply a (non-derived) localization of (non-commutative) rings?
%\end{Qs}

\begin{Ex}
	We note that the converse does not hold, that is, let $A$ be a finite dimensional algebra of global dimension $d$, and let $e\in A$ be an idempotent as in \ref{ass} such that $A/(e)$ is $(d-1)$-representation infinite. Then $A$ is not necessarily $d$-representation infinite.
	
	We have the following example in $d=2$. Let $A$ be the algebra defined by the following quiver with commutativity relations and $e=e_1$.
	\[ \xymatrix{ 1\ar@2[r]^x_y&2\ar@2[r]^x_y&3 } \]
	Then $A/(e_1)$ is $1$-representation infinite, but $A$ is not $2$-representation infinite. Indeed, we have $\id P_2=1$.
\end{Ex}

%\begin{Qs}
%Is the push-out of higher hereditary algebras again higher hereditary? This is the case for $1$-hereditary algebras.
%\end{Qs}

We end this section by recording the dual of \ref{-1RI}.
\begin{Thm}\label{-1RI*}
Let $A$ be a $d$-representation infinite algebra. Let $e\in A$ be the idempotent satisfying the following.
\begin{itemize}
\item $(1-e)Ae=0$.
\item $\pd_AD\sA<d$.
\item $eAe$ is a semisimple algebra.
\end{itemize}
Then $A/(e)$ is $(d-1)$-representation infinite.
\end{Thm}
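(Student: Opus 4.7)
The plan is to deduce \ref{-1RI*} from \ref{-1RI} via the duality $A \leftrightarrow A^{\op}$, since all notions involved are self-dual once left and right modules are exchanged.

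First I would recall that $A$ is $d$-representation infinite if and only if $A^{\op}$ is, which follows from the $k$-linear duality $D\colon \Db(\mod A)\to\Db(\mod A^{\op})$ intertwining the Nakayama functor $\nu = -\lotimes_ADA$ on the former with $\nu^{-1}$ on the latter and taking $\mod A$ to $\mod A^{\op}$. It therefore suffices to show that $A^{\op}/(e)$ is $(d-1)$-representation infinite, and I would obtain this by applying \ref{-1RI} to the pair $(A^{\op}, e)$.

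Next I would check that the three hypotheses of \ref{-1RI*} on $(A,e)$ translate exactly into those of \ref{-1RI} on $(A^{\op},e)$. The condition $(1-e)Ae=0$ in $A$ reads as $eA^{\op}(1-e)=0$ in $A^{\op}$ after reversing the product, and $eA^{\op}e=(eAe)^{\op}$ is semisimple precisely when $eAe$ is. For the third condition, I would invoke the standard identity $\pd_A DM = \id_{A^{\op}} M$, valid for any finite dimensional right $A$-module $M$, applied to $M = \sA$ regarded as a right $A$-module via the bimodule structure coming from the surjection $A\to\sA$. Since $\sA$ viewed as a right $A$-module coincides with $A^{\op}/(e)$ viewed as a left $A^{\op}$-module, the hypothesis $\pd_A D\sA < d$ becomes $\id_{A^{\op}}(A^{\op}/(e)) < d$, which is exactly the injective-dimension hypothesis of \ref{-1RI} for $A^{\op}$.

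There is no substantial obstacle; the proof is entirely formal. The only subtlety worth stressing is the bookkeeping of left versus right module structures on the bimodule $\sA$ when translating the projective-dimension hypothesis, making sure that the left $A$-module structure on $D\sA$ is the one induced by the right $A$-module structure on $\sA$, which is what becomes the canonical left $A^{\op}/(e)$-module structure in the dual picture.
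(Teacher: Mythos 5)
Your proposal is correct and is exactly the argument the paper intends: Theorem \ref{-1RI*} is stated only as ``the dual of \ref{-1RI}'', and passing to $A^{\op}$, using that $A$ is $d$-representation infinite iff $A^{\op}$ is, and translating the hypotheses via $(1-e)Ae=eA^{\op}(1-e)$, $(eAe)^{\op}=eA^{\op}e$, and $\pd_A D\sA=\id_{A^{\op}}\sA$ is precisely that dualization. The bookkeeping of left versus right structures in your last paragraph is the only delicate point, and you handle it correctly.
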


\section{Folded cluster categories and strict root pairs}\label{cluster}
Let us recall the notion of folded cluster categories which are finite cyclic quotient of (oridnary) cluster categories, which appear naturally as a model of singularity categories.

For our purposes, we restrict ourselves to the folded cluster categories of finite dimensional algebras. Recall that a finite dimensional algebra $A$ is {\it $\tau_d$-finite} \cite{Iy11} if $\gldim A\leq d$ and $\Ext^d_A(DA,A)^{\otimes_An}=0$ for sufficiently large $n$. For a positive integer $a$, an {\it $a$-th root} of a bimodule complex $X$ is a bimodule complex $U$ satisfying $U^{\lotimes_Aa}\simeq X$ in $\rD(A^e)$.
\begin{Def}[\cite{HaI,Ha7}]
Let $A$ be a finite dimensional algebra which is $\tau_d$-finite, and let $U$ be an $a$-th root of $\RHom_A(DA,A)[d]$. We define the {\it $a$-folded cluster category $\rC_d^{(1/a)}(A)$} as the triangulated hull of the orbit category $\Db(\mod A)/-\lotimes_AU$, thus we have a diagram
\[ \xymatrix{ \Db(\mod A)\ar[r]&\Db(\mod A)/-\lotimes_AU\ar@{^(->}[r]&\rC_d^{(1/a)}(A) }. \]
\end{Def}

Note that in this generality, the folded cluster category is just an orbit category, and further properties like the existence of a cluster tilting object is not guaranteed. 
%Although we know that the $d$-th root of $\nu_{d-1}^{-1}$ is given by $\C(T,T(1))$ viewed as a bimodule complex over $A$, it is not immediately clear how it acts on $\D^b(\mod A)$.
We need some additional data called a root pair \cite{Ha7} in the following sense.
%Let us recall from \cite{ha7} the notion of strict roots of $\nu_d$.
For a bimodule complex $X$ over an algebra $A$ and $i\geq0$, we denote by $X^i$ the $i$-fold derived tensor power of $X$ over $A$.
\begin{Def}\label{strict}
	Let $A$ be a finite dimensional Iwanaga-Gorenstein algebra and $n\in\Z$. A {\it strict $a$-th root pair} on $A$ is a pair $(U,P)$ consisting of $U\in\rD(A^e)$ and $P\in\proj A$ satisfying the following.
	\begin{itemize}
		\item $U^a\simeq\RHom_A(DA,A)[n]$ in $\rD(A^e)$, where the tensor factor is $a$ times.
		\item $\RHom_A(P\lotimes_AU^{j},P\lotimes_AU^{i})=0$ for every $0\leq i<j\leq a-1$.
		\item $\add A=\add(\bigoplus_{i=0}^{a-1}P\lotimes_AU^{i})$.
	\end{itemize}
\end{Def}
The first condition says that the functor $-\lotimes_AU$ is an $a$-th root of (the inverse of) the shifted Serre functor. The second and the third conditions together say that $\per A$ has a semi-orthogonal decomposition $\per A=\thick (P\lotimes_AU^{a-1})\perp\cdots\perp\thick P$ with each of its component the thick closure of a projective module.

We have the following result on cluster tilting subcategories and folded cluster categories arising from strict root pairs, which is a folded analogue of \cite{Iy11,Am09}.
\begin{Thm}[\cite{Ha7}]
	Let $A$ be a finite dimensional algebra which is $\tau_d$-finite, and let $(U,P)$ be a strict $a$-th root pair for $\RHom_A(A,A^e)[d]$.
	\begin{enumerate}
		\item The subcategory $\rU:=\add\{ P\lotimes_A U^i\mid i\in\Z\}\subset\Db(\mod A)$ is a $d$-cluster tilting subcategory.
		\item The object $P\in\rC_d^{(1/a)}(A)$ is a $d$-cluster tilting object.
	\end{enumerate}
	Therefore, the orbit functor restricts to cluster tilting subcategories, giving rise to the commutative diagram
	\[ \xymatrix@R=3mm{
		\Db(\mod A)\ar[r]\ar@{}[d]|-\vsubset&\rC_d^{(1/a)}(A)\ar@{}[d]|-\vsubset\\
		\rU\ar[r]&\add P. } \]
\end{Thm}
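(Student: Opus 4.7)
The plan is to reduce both assertions to the Amiot--Iyama theorem on cluster tilting for $\tau_d$-finite algebras, by exploiting the third axiom of a strict root pair to identify $\rU$ with an $\nu_d^{-1}$-orbit of $A$. Using the first axiom of \ref{strict}, the auto-equivalence $-\lotimes_AU^a$ of $\Db(\mod A)$ coincides with $\nu_d^{-1}$, and substituting $\add A=\add(\textoplus_{i=0}^{a-1}P\lotimes_AU^i)$ from the third axiom into the $\nu_d^{-1}$-orbit of $A$ yields
\[
\add\{\nu_d^{-k}A\mid k\in\Z\}=\add\{P\lotimes_AU^{i+ak}\mid 0\le i\le a-1,\ k\in\Z\}=\add\{P\lotimes_AU^j\mid j\in\Z\}=\rU.
\]
Part (1) then follows at once: for a $\tau_d$-finite algebra, the $\nu_d^{-1}$-orbit of $A$ is a $d$-cluster tilting subcategory of $\Db(\mod A)$ by Iyama's theorem.

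For part (2), write $\pi:\Db(\mod A)\to\rC_d^{(1/a)}(A)$ for the canonical functor to the orbit category composed with the inclusion into its triangulated hull. Since $\pi$ identifies $X$ with $X\lotimes_AU$ up to natural isomorphism, $\pi(P\lotimes_AU^i)\cong\pi(P)$ for all $i\in\Z$, whence $\pi(\rU)=\add P$; this simultaneously establishes the commutative diagram at the end of the statement. Rigidity of $P$ follows by unfolding the orbit Hom: for $1\le i\le d-1$,
\[
\Hom_{\rC_d^{(1/a)}(A)}(P,P[i])=\textoplus_{n\in\Z}\Hom_{\Db(\mod A)}(P,P\lotimes_AU^n[i])=0,
\]
each summand vanishing because $P$ and $P\lotimes_AU^n$ both lie in $\rU$, which is $d$-cluster tilting by part (1).

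It remains to verify the detection property: if $X\in\rC_d^{(1/a)}(A)$ satisfies $\Hom(P,X[i])=0$ for $1\le i\le d-1$, then $X\in\add P$. The natural strategy is to lift $X$ to some $\widetilde X\in\Db(\mod A)$ with $\pi(\widetilde X)\simeq X$, at which point unfolding the orbit Hom gives $\Hom_{\Db}(P\lotimes_AU^{-n},\widetilde X[i])=0$ for every $n\in\Z$ and $1\le i\le d-1$, so that $\widetilde X$ is right-orthogonal to $\rU[1],\ldots,\rU[d-1]$; part (1) then forces $\widetilde X\in\rU$, hence $X\in\pi(\rU)=\add P$. The main obstacle is precisely this lifting step for objects in the triangulated hull, which goes beyond the naive orbit category. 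Following Keller's DG model one would realize $\rC_d^{(1/a)}(A)$ inside a quotient $\per\Pi/\pvd\Pi$ of a Calabi--Yau completion $\Pi$ of the root pair, in which $\Db(\mod A)$ embeds as a generating subcategory; combining this with the rigidity established above and a standard tilting-approximation argument should produce the required lift $\widetilde X$ up to a summand already in $\add P$.
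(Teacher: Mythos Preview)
The paper does not prove this statement; it is quoted from \cite{Ha7} (listed as ``in preparation'') and no argument is supplied here. So there is nothing to compare your proposal against in this paper.

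Assessing your proposal on its own: your treatment of part (1) is correct and is exactly the expected reduction --- the third axiom of a strict root pair collapses $\rU$ to $\add\{\nu_d^{-k}A\mid k\in\Z\}$, and Iyama's theorem \cite{Iy11} for $\tau_d$-finite algebras does the rest. The rigidity half of part (2) is also fine, since morphisms in the triangulated hull between objects in the image of $\Db(\mod A)$ are computed by the orbit formula.

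The gap is precisely where you flag it: the detection property. Your sketch (``realize $\rC_d^{(1/a)}(A)$ inside $\per\Pi/\pvd\Pi$ \ldots\ a standard tilting-approximation argument should produce the required lift'') is not yet a proof. The difficulty is that an arbitrary $X$ in the triangulated hull need not lie in the image of $\Db(\mod A)$, and producing a lift $\widetilde X$ with the correct orthogonality is the substantive content of results of Amiot--Guo--Keller type. In the ordinary ($a=1$) case this is handled via the canonical $t$-structure on $\per\Pi$ and a fundamental-domain argument \cite{Am09,Ke11,IYa2}; for the folded version one needs the analogous machinery for the Calabi--Yau completion of the root pair developed in \cite{Ha7}. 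Without invoking or reproducing that argument, the detection step remains open, so your proof of (2) is incomplete as written.
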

%\comment{cyclic invariance}
We shall see that for certain (graded and ungraded) singularity categories of commutative rings with a canonical cluster tilting object, we get the isomorphic diagram as above, realizing the singularity category as its canonical form together with the information on cluster tilting objects.

\medskip
%Let us introduce a notion which distinguishes certain ``nicest form'' of roots of $\nu_d$ on derived categories, which enables us to track the action of $\nu_d^{1/a}$ on $\D^b(\md A)$.
In our examples, we are given an algebraic triangulated category $\T$ with an $a$-th root $F$ of a shifted Serre functor, and we have a tilting object $T$, so that there is an equivalence $\T\simeq\per A$ for $A=\End_\T(T)$. It is natural to ask whether $F$ forms a root pair on $A$ for some projective module. 
The following is a straightforward reformulation of \ref{strict} in terms of tilting objects. We say that an $a$-th root $F$ of $\nu_n$ in an algebraic triangulated category is {\it algebraic} if $F$ is given by a bimodule and $F^a\simeq\nu_n$ lifts to an isomorphism of bimodules (over a fixed enhancement).
\begin{Lem}\label{tilt}
	Let $\T$ be an algebraic triangulated category with a Serre functor $\nu$, let $n\in\Z$ and $a\in\Z_{>0}$. Suppose that there is an algebraic $a$-th root of $\nu_n$.
	If there is $T_0\in\T$ satisfying
	\begin{enumerate}
		\renewcommand\labelenumi{(\roman{enumi})}
		\item $T:=\bigoplus_{i=0}^{a-1}F^iT_0$ is a tilting object of $\T$, and 
		\item $\Hom_\T(F^jT_0,F^iT_0[l])=0$ for all $0\leq i<j\leq a-1$ and $l\in\Z$,
	\end{enumerate}
	then $A=\End_\T(T)$ has a strict root pair $(U,P)$ with $P=\Hom_\T(T,T_0)$ and $U=\RHom_\A(T,FT)$ for an enhancement $\A$ of $\T$.
\end{Lem}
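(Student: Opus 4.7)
The plan is to use the tilting equivalence $\Phi = \RHom_\A(T,-) \colon \T \xrightarrow{\sim} \per A$ to translate the hypotheses on $T_0$ directly into the three axioms of a strict root pair. Under $\Phi$, the tilting object $T$ goes to $A$, so its summand $T_0$ maps to $P = \Hom_\T(T,T_0)$, which is then a direct summand of $A$ and hence projective. The bimodule $U = \RHom_\A(T,FT) \in \rD(A^e)$ represents $F$ under $\Phi$ in the sense that $\Phi \circ F \simeq (-\lotimes_A U) \circ \Phi$; this is the standard correspondence between derived autoequivalences of $\per A$ and invertible bimodule complexes, pinned down by the fact that both sides send $A = \Phi(T)$ to $U = \Phi(FT)$. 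Iterating gives $\Phi(F^i T_0) \simeq P \lotimes_A U^i$ for every $i \geq 0$.

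With this dictionary the three axioms of \ref{strict} are verified in turn. Applying $\Phi$ to the decomposition $T = \bigoplus_{i=0}^{a-1} F^i T_0$ yields $A \simeq \bigoplus_{i=0}^{a-1} P \lotimes_A U^i$, which is the additive generation axiom. The Hom-vanishing axiom becomes, after transport,
\[ \RHom_A(P \lotimes_A U^j, P \lotimes_A U^i) \simeq \RHom_\T(F^j T_0, F^i T_0) = 0 \qquad (0 \leq i < j \leq a-1), \]
which is precisely hypothesis (ii). The remaining root identity $U^a \simeq \RHom_A(DA,A)[n]$ in $\rD(A^e)$ is obtained by applying $\Phi$ to the isomorphism between $F^a$ and the (inverse) shifted Serre functor on $\T$, using that the Serre functor on $\T$ is carried under $\Phi$ to $-\lotimes_A DA$ on $\per A$ by uniqueness of Serre functors.

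The main subtle point is this last step: the functor-level hypothesis that $F$ is an algebraic $a$-th root must be upgraded to an honest isomorphism of bimodule complexes in $\rD(A^e)$, not just a natural isomorphism of triangulated functors. This is exactly what the \emph{algebraic} condition on the root is designed to provide; after fixing compatible dg models for $F$ and the shifted Serre functor over the enhancement $\A$, the composite isomorphism is realised by a genuine quasi-isomorphism of dg bimodules, and the tilting dictionary delivers the desired identity for $U^a$. Once this bookkeeping is carried out, the remaining parts of the statement are formal consequences of full-faithfulness of $\Phi$ and the identifications set up in the first paragraph.
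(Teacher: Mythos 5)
Your proposal is correct and is essentially the argument the paper has in mind: the paper states \ref{tilt} without proof as a ``straightforward reformulation'' of \ref{strict}, and your write-up is exactly the intended translation along the dg Morita equivalence $\RHom_\A(T,-)\colon\T\xsimeq\per A$, with conditions (i) and (ii) giving the generation and orthogonality axioms and the \emph{algebraic} hypothesis on $F$ supplying the bimodule-level identity $U^a\simeq\RHom_A(DA,A)[n]$ --- which you rightly single out as the only non-formal point. (The slight mismatch between ``root of $\nu_n$'' and the inverse Serre bimodule is a sign-convention looseness already present in the paper, not a defect of your argument.)
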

We will therefore look for, and do construct, a tilting object of the form (i) above.
%\begin{Rem}
%\begin{enumerate}
%\item Strictness of $F$ depends on the algebra $A$, not just its derived equivalence class, thus the precise terminology should be ``strict for $A$''. %It might happen that for derived equivalent algebras $A$ and $B$, an $a$-th root $F$ is strict wrt $A$ but not wrt $B$.
%Given a root $F$ of $\nu_d$, it is therefore important to find, if any, an algebra for which $F$ is strict.
%\item It is known that any root of $\nu_1$ over hereditary algebras is strict up to derived equivalence \cite[6.4]{ha4}, giving a positive answer to the existence problem above.
%\end{enumerate}
%\end{Rem}

%construct $d$-represetation infinite algebras with strict roots of $\nu_d$ from Calabi-Yau algebras \cite{ha3}, which we recall from now on.
\section{Veronese subrings}
%The aim of this section is to give an understanding of this $\nu_{d-1}^{-1/d}$ of $\D^b(\md A)$, in the case $R$ is a Veronese subring, which we do by giving a new tilting object. %In fact, we show that $\C_{d-1}^{(1/d)}(A)$ arises as a ``standard form'' which we describe in the following subsection.
\subsection{Tilting theory}
The main result of this section is the existence of a tilting object whose endomorphism algebra is higher representation infinite, moreover with a strict root pair on it. %Recall that a finite dimensional algebra $A$ is {\it $d$-representation infinite} \cite{HIO} if $\gd A\leq d$ and $\nu_d^{-i}A\in\md A$ for all $i\geq0$ in $\D^b(\md A)$. Note that we understand ``$0$-representation infinite'' algebras as semisimple algebras.
\begin{Thm}\label{VER}
	Let $k$ be an arbitrary field, $a\geq1$ and $n\geq2$ be arbitrary integers, $d=an$, and $S=k[x_1,\ldots,x_d]$. Let $R=S^{(n)}$ be the $n$-th Veronese subalgebra. Give a grading on the $R$-module $S$ (in particular on the ring $R$) by requiring a monimial of degree $cn+j$ to have degree $c$ when $0\leq j\leq n-1$. Put
	\[ 	T=S\oplus \Om S(1)\oplus\cdots\oplus \Om^{a-1}S(a-1). \]
	Then we have the following.
	\begin{enumerate}
		\item $T\in\sg^\Z\! R$ is a tilting object.
		\item $A=\sEnd_R^\Z(T)$ is $(d-a-1)$-representation infinite with a strict $a$-th root pair for $\nu_{d-a-1}$.
		\item We have a commutative diagram of equivalences
		\[ \xymatrix@R=5mm{
			\sg^\Z\!R\ar[r]\ar[d]^-\rsimeq&\sg^{\Z/a\Z}\!R\ar[r]\ar[d]^-\rsimeq&\sg R\ar[d]^-\rsimeq\\
			\Db(\mod A)\ar[r]&\rC_{d-1}(A)\ar[r]&\rC_{d-1}^{(1/a)}(A). } \]
	\end{enumerate}
\end{Thm}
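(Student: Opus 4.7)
The plan is to treat the three parts in order, with part (1) as the technical heart and parts (2), (3) as essentially formal consequences of (1) combined with the general machinery developed in \cite{Ha7,HaI} and with Theorem \ref{-1RI}. Throughout I would exploit that under this grading, $R$ is a graded Gorenstein isolated singularity of dimension $d$ with Gorenstein parameter $a$, so that the Serre functor on $\sg^\Z R$ is given by the grading shift $(a)$ up to a cohomological shift; in particular the framework of diagram (1) of the introduction applies.

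For part (1) I would proceed in two steps. Generation: since $\Om$ becomes $[-1]$ in $\sg^\Z R$, the summand $\Om^i S(i)$ of $T$ is, up to a cohomological shift, the graded module $S(i)$, so $S(i) \in \thick T$ for $0 \le i \le a-1$. Using that the Serre functor acts by $(a)$ up to a shift, all further twists $S(i)$, $i \in \Z$, also lie in $\thick T$, and the subcategory generated by $\{ S(i) \mid i \in \Z \}$ is all of $\sg^\Z R$ by the graded version of Iyama's result that $S$ is an additive generator of $\sCM R$. Ext-vanishing: reducing via $\Om \simeq [-1]$, one must verify
\[ \Hom_{\sg^\Z R}(S(m), S[l]) = 0 \text{ for all } l \ne m, \ -(a-1) \le m \le a-1. \]
These spaces I would compute from a graded free resolution of $S$ over $R$ obtained by pushing the Koszul complex of $S$ down under the Veronese identification $R = S^{(n)}$; the window $|m| \le a-1$ is exactly where the graded pieces of the resolution line up to force vanishing.

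For part (2) I would describe $A = \sEnd^\Z_R(T)$ by a quiver and relations and then apply Theorem \ref{-1RI} iteratively. When $a=1$, $A$ is the truncated $d$-Beilinson algebra appearing in Example \ref{trunc} and is $(d-2)$-representation infinite. For general $a$ the algebra has $a$ natural blocks indexed by the summands of $T$, and one can peel off one block at a time via \ref{-1RI} by verifying the hypotheses of Assumption \ref{ass}: semisimplicity of $eAe$ and the triangularity $eA(1-e) = 0$ (or its dual) come from the explicit quiver presentation, while the injective-dimension bound on $\sA$ is where higher representation-infiniteness of the ambient algebra feeds in. The strict root pair $(U,P)$ with $U = \Hom^\Z_{\sg R}(T, \Om T(1))$ and $P = \Hom^\Z_{\sg R}(T,S)$ then follows from Lemma \ref{tilt} applied to $T_0 = S$ and the autoequivalence $F \colon M \mapsto \Om M(1)$ of $\sg^\Z R$, since $F^i S = \Om^i S(i)$ exhaust the summands of $T$ and the vanishing $\Hom_{\sg^\Z R}(F^j S, F^i S[l]) = 0$ for $0 \le i < j \le a-1$ and all $l \in \Z$ is exactly the Ext-vanishing already proved in part (1) (restricted to the relevant range of $m$); the identity $F^a \simeq \nu_{d-a-1}^{-1}$ records that the Serre functor of $\sg^\Z R$ acts by $(a)[d-1]$.

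Finally, part (3) is a direct application of the framework of diagram (1) in the introduction and the main result of \cite{HaI}: once the tilting equivalence $\sg^\Z R \simeq \Db(\mod A)$ from (1) is in hand and $A$ carries a strict $a$-th root pair as in (2), the orbit category construction produces the middle and rightmost vertical equivalences automatically and makes the diagram commute. The hard part will be part (1), specifically pinning down the Koszul-type computation of $\Hom_{\sg^\Z R}(S(m), S[l])$ and explaining why $0 \le i \le a-1$ is the correct window in the definition of $T$; the subsequent Assumption \ref{ass} verifications in part (2) are conceptually routine but require a firm grip on the quiver presentation of $A$ at each inductive step.
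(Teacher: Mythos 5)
The main gap is that the technical heart of part (1) is left open and your fallback route to part (2) fails for $a\geq2$. For the Ext-vanishing you only formulate the window $|m|\leq a-1$ needed for tilting and defer the computation to ``a graded free resolution of $S$ over $R$ obtained by pushing down the Koszul complex''; but the pushed-down Koszul complex is not an $R$-free resolution (its terms are sums of the non-free modules $M_i$), the minimal graded free resolution of $S$ over $R$ is infinite, and what is actually needed is the much stronger single-degree concentration statement that $\sHom_R(S,\Om^pS)$ is concentrated in degree $p$ for \emph{all} $p\geq0$ (Lemma \ref{conc}), which the paper proves by a double induction on the syzygies of the $(d-1)$-almost split sequences together with control of generation degrees (Lemmas \ref{pure}, \ref{ind}). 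This concentration is also what powers the $(d-a-1)$-representation infiniteness: one must check $\sHom_R^\Z(S,\Om^qS(q)[p])=0$ for all $q>-a$, an infinite range, not just the tilting window. Your alternative route to representation infiniteness --- iterating \ref{-1RI} over the $a$ blocks of $A$ --- does not work for $a\geq2$: Theorem \ref{-1RI} produces idempotent quotients of an algebra \emph{already known} to be higher representation infinite, so you would need an ambient such algebra having $A$ as an iterated idempotent quotient, and none is identified; the natural candidate, the Beilinson algebra of $\PP^{d-1}$, is ruled out because for $a\geq2$ the quiver of $A$ has $\binom{d}{2}$-fold arrows between consecutive blocks, which cannot occur in an idempotent quotient of an algebra having only $d$-fold arrows. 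Indeed the paper's remark after the proof of \ref{VER} points out that the \ref{-1RI} shortcut is available only when $a=1$ (Example \ref{trunc}); for general $a$ it proves representation infiniteness directly from the three vanishing conditions, using \ref{conc}, the $(d-1)$-cluster tilting property of $S$, and Serre duality.

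Your generation argument also has a hole: from $S(i)\in\thick T$ for $0\leq i\leq a-1$ you conclude that all twists $S(i)$, $i\in\Z$, lie in $\thick T$ ``using that the Serre functor acts by $(a)$ up to a shift,'' but a thick subcategory is not automatically stable under the Serre functor --- stability of $\thick T$ under $(a)$ is essentially equivalent to what you are trying to prove. (Also, $S$ is not an additive generator of $\sCM R$; it is a $(d-1)$-cluster tilting object, which is the actual reason $\add\{S(i)\mid i\in\Z\}$ generates $\sg^\Z\!R$.) The paper's Lemma \ref{seisei} closes this by induction along the $(d-1)$-almost split sequences: the sequence starting at $M_i$ exhibits $M_i(a)$ inside the thick subcategory generated by $T$ and the $M_j(a)$ with $j<i$, and the sequence ending at $M_i(a-1)$ handles negative twists. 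The remaining parts of your plan --- the strict root pair via \ref{tilt} with $T_0=S$ and $F=\Om(1)$, and part (3) via \cite{HaI} --- do agree with the paper.
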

Decompose $S=\bigoplus_{i=0}^{n-1}M_i$ as graded $R$-modules, where $M_i=\bigoplus_{a\geq0}S_{cn+i}$ is the graded $R$-module whose degree $c$ part consists of polynomials in $x_1,\ldots,x_d$ of degree $cn+i$.
\begin{Rem}
	\begin{enumerate}
		\item $a$ is the Gorenstein parameter of $R$.
		%\item When $a=1$, our $(d-2)$-representation infinite algebra is the same as \cite[5.5]{AIR}, see \ref{} below.
		\item When $k$ has characteristic prime to $n$ and has all $n$-th root of unity, our grading on $R$ and $M_i$'s can be reformulated using the McKay quiver as follows. Let $G$ be the cyclic subgroup of $\SL_d(k)$ generated by $\diag(\zeta,\ldots,\zeta)$ for a fixed primitive $n$-th root of unity $\zeta$. Then $G$ naturally acts on $S$, and the invariant subring is nothing but our Veronese subring $R$. Also we know that the skew group algebra $S\ast G$ is isomorphic to $\End_R(S)$ \cite{Au62}\cite[10.8]{Yo90}, and is presented by the following McKay quiver of $G$ (with $d$-fold arrows between the vertices) and commutativity relations \cite{BSW}.
		\[ \xymatrix@R=3mm{
			&0\ar@3[dr]&\\
			n-1\ar@3[ur]^-1&&1\ar@3[dd] &\\
			\\
			\cdots\ar@3[uu]&&\cdots} \]
		Now give a grading on this McKay quiver by setting the arrows from $n-1$ to $0$ to have degree $1$, and the remaining arrows to have degree $0$.
		If $e_i$ is the idempotent of $S\ast G$ corresponding to vertex $i$, then we have $R=e_0(S\ast G)e_0$ and $M_i=e_i(S\ast G)e_0$ as an $R$-module. Then the grading of $R$ and $M_i$ induced from that of $S\ast G$ is precisely the one we gave above.
		\item The $(d-a-1)$-representation infiniteness of $A$ can be rephrased that the summands of $T$ form a full strong exceptional sequence $(M_1,\ldots,M_{n-1},\Om M_1(1),\ldots,\Om^{a-1}M_{n-1}(a))$ in $\sg^\Z\!R$ which is {\it geometric} in the sense of \cite[Section 2]{BP}.
	\end{enumerate}
\end{Rem}
%distinguished in the sense that this has a nice homological property, and gives a better understanding of $n$-th root.

We define the syzygy $\Om$ as the kernel of the projective cover in $\md^\Z\!R$. Also we let $\Om^{-1}$ be the cosyzygy, the cokernel of the minimal left projective approximation in $\md^\Z\!R$. Let us start with the following simple observation.
\begin{Lem}\label{pure}
	Let $R=\bigoplus_{i\geq0}R_i$ be a positively graded algebra such that $R_0$ is semisimple. Suppose that
	\[ \xymatrix{ 0\ar[r]&X\ar[r]&Y\ar[r]&Z\ar[r]&0 } \]
	is an exact sequence of modules without projective summands such that each term is generated in degree $c\in\Z$. Then there is an exact sequence of syzygies
	\[ \xymatrix{ 0\ar[r]&\Om X\ar[r]&\Om Y\ar[r]&\Om Z\ar[r]&0 }. \]
\end{Lem}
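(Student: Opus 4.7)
The plan is to apply the horseshoe lemma to the minimal projective covers of $X$, $Y$, $Z$, the key point being that the semisimplicity of $R_0$ forces the induced short exact sequence of tops to split.

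First I would record the form of projective covers in this setup. Since $R_0$ is semisimple and $R$ is positively graded, the graded Jacobson radical of $R$ coincides with $R_{>0}$, so for any graded $R$-module $M$ generated in degree $c$ the top $M/R_{>0}M$ is concentrated in degree $c$ and equals $M_c$ as an $R_0$-module. The minimal projective cover is then the multiplication map $R\otimes_{R_0} M_c \twoheadrightarrow M$ (with the tensor placed so the generators sit in degree $c$), and $\Om M$ is its kernel.

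Next I would extract the degree-$c$ slice of the given sequence to obtain a short exact sequence $0\to X_c\to Y_c\to Z_c\to 0$ of $R_0$-modules. Since $R_0$ is semisimple this splits, giving a section $\sigma\colon Z_c\to Y_c$ with $Y_c\cong X_c\oplus\sigma(Z_c)$. Applying the exact functor $R\otimes_{R_0}-$ then yields a split short exact sequence $0\to P_X\to P_Y\to P_Z\to 0$ of projectives with $P_Y\cong P_X\oplus P_Z$. Using this section together with the projectivity of $P_Z$, I would lift $P_Z\to Z$ to a map $\tilde\sigma\colon P_Z\to Y$ and, combining with the projective cover $P_X\to X\hookrightarrow Y$, produce a map $\pi_Y\colon P_Y\to Y$ completing a commutative horseshoe above the original short exact sequence. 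Crucially, $\topp P_Y = X_c\oplus Z_c\cong Y_c = \topp Y$, so $\pi_Y$ is in fact the minimal projective cover of $Y$ and $\ker\pi_Y=\Om Y$. The snake lemma applied to this horseshoe then yields the desired $0\to\Om X\to\Om Y\to\Om Z\to 0$.

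The only real subtlety is identifying the middle term $P_X\oplus P_Z$ as the minimal projective cover of $Y$ rather than merely some projective presentation; this is exactly what the splitting of the top short exact sequence over $R_0$ secures. The no-projective-summand hypothesis does not enter this identification directly but is the standard framework in which $\Om$ is unambiguous.
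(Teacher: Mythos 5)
Your proof is correct and follows essentially the same route as the paper's: both rest on the observation that generation in degree $c$ together with semisimplicity of $R_0$ makes the natural map $M_c\otimes_{R_0}R\to M$ a projective cover, and that the degree-$c$ slice $0\to X_c\to Y_c\to Z_c\to 0$ is exact (indeed split) over $R_0$. The paper obtains the commutative diagram at once from the naturality of these cover maps, so the horseshoe lifting and the comparison of tops in your last step are not needed, though your verification that the constructed map $P_X\oplus P_Z\to Y$ is the minimal cover is sound.
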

\begin{proof}
	Note that a graded $R$-module $M$ is generated in degree $c$ if and only if the natural map $M_c\otimes_{R_0}R\to M$ is surjective. If $R_0$ is semisimple, then this map is a projective cover. Suppose that $0\to X\to Y\to Z\to 0$ is an exact sequence of graded modules generated in degree $c$. Then by the above remark we have an exact sequence of projective covers, which yield the following commutative diagram of exact sequences.
	\[ \xymatrix{
		0\ar[r]&X_c\otimes_{R_0}R\ar[r]\ar@{->>}[d]& Y_c\otimes_{R_0}R\ar[r]\ar@{->>}[d]&Z_c\otimes_{R_0}R\ar[r]\ar@{->>}[d]&0\\
		0\ar[r]&X\ar[r]&Y\ar[r]&Z\ar[r]&0 } \]
	Taking the kernels we obtain the desired exact sequence of syzygies. %Finally, the syzygies are concentrated in degree $\geq a+1$ by the semisimplicity of $R_0$.
\end{proof}

Let $V$ be the vector space with basis $\{x_1,\ldots,x_d\}$. Then the Koszul complex of $S=k[x_1,\ldots,x_d]$ is an exact sequence
\[ \xymatrix{ 0\ar[r]&S\otimes\bigwedge^dV\ar[r]&\cdots\ar[r]&S\otimes V\ar[r]&S\ar[r]&k\ar[r]&0 } \]
whose maps are $S$-linear maps such that
\[ S\otimes\Wedge^pV\to S\otimes\Wedge^{p-1}V, \quad 1\otimes(x_{i_1}\wedge\cdots\wedge x_{i_p})\mapsto \sum_{j=1}^p(-1)^jx_{i_j}\otimes(x_{i_1}\wedge\cdots\wedge\widehat{x_{i_j}}\wedge\cdots\wedge x_{i_p}). \]
Decomposing this complex of $S$-modules over $R$, we obtain the $(d-1)$-almost split sequences and $(d-1)$-fundamental sequence in $\add S\subset\CM R$ \cite{Iy07a,Iy07b}.
Taking our grading into account, the almost split sequence in $\add\{S(i)\mid i\in\Z\}\subset\CM^\Z\!R$ starting at $M_i$ is given as follows.
\[ \xymatrix@!C=4mm@!R=2mm{
	&M_{i+1}^\oplus\ar[rr]\ar[dr]&&\cdots\ar[rr]&&M_{n-1}^\oplus\ar[rr]\ar[dr]&&R(1)^\oplus\ar[rr]\ar[dr]&&M_1(1)^\oplus\ar[rr]&&\cdots\ar[rr]\ar[dr]&&M_{i-1}(a)^\oplus\ar[dr]&\\
	M_i\ar[ur]&&N_{i,i+1}\ar[ur]&& &&N_{i,n-1}\ar[ur]&&N_{i,n}(1)\ar[ur]&& && N_{i,i+d-2}(n)\ar[ur]&&M_i(a) } \]
Here we have defined $N_{i,l}$ so that there is a surjection $M_j(c)^{\oplus}\to N_{i,cn+j}(c)$ in $\md^\Z\!R$, where we will always write $l=cn+j$ with $0\leq j\leq n-1$.

Let us observe that these sequences show $T=S\oplus\Om S(a)\oplus\cdots\Om^{a-1}S(a-1)$ generates the graded singularity category.
\begin{Lem}\label{seisei}
	We have $\thick T=\sg^\Z\!R$.
\end{Lem}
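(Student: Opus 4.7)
My plan is to show that every $M_j(c)$ with $1 \le j \le n-1$ and $c \in \mathbb{Z}$ lies in $\thick T$. To reduce the lemma to this claim I would invoke the Veronese structure $R = S^{(n)}$: since $R$ is the invariant ring of a diagonal cyclic group action on $S$, Auslander's theorem identifies $\End_R(S)$ with a skew group algebra, whence the indecomposable graded MCM $R$-modules are, up to grading shift, the summands $M_0, \ldots, M_{n-1}$ of $S$. Thus $\CM^\Z R = \add\{M_j(c) \mid 0 \le j \le n-1,\, c \in \mathbb{Z}\}$, and discarding the projective $M_0 = R$ gives $\sg^\Z R = \thick\{M_j(c) \mid 1 \le j \le n-1,\, c \in \mathbb{Z}\}$.

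The base case $0 \le c \le a-1$ is immediate: since $\Om = [-1]$ in $\sg^\Z R$, the summands of $T$ are $\Om^c M_j(c) \simeq M_j(c)[-c]$ for $0 \le c \le a-1$ and $1 \le j \le n-1$, placing each $M_j(c)$ in $\thick T$ up to a shift. For arbitrary $c$ I would use the $(d-1)$-almost split sequences displayed above, suitably shifted. For each $c_0 \in \mathbb{Z}$ and $1 \le i \le n-1$, shifting the sequence from $M_i$ to $M_i(a)$ by $c_0 - a$ yields
\[ 0 \to M_i(c_0 - a) \to M_{i+1}(c_0 - a)^{\oplus} \to \cdots \to M_{i-1}(c_0)^{\oplus} \to M_i(c_0) \to 0 \]
whose intermediate terms, after killing the projectives $R(c) = 0$ in $\sg^\Z R$, consist of $M_j(c)$ with $c_0 - a \le c \le c_0$. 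Splicing into short exact sequences converts this into a chain of triangles in $\sg^\Z R$, so each of $M_i(c_0 - a)$ and $M_i(c_0)$ lies in the thick subcategory generated by the other together with the intermediates.

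A forward induction on $c_0 \ge a$, combined with an inner induction on $i = 1, 2, \ldots, n-1$, then handles all $c \ge a$: at step $i$ the only unknown intermediates are $M_{j'}(c_0)$ with $j' < i$, which are supplied by the earlier steps of the inner induction (the case $i = 1$ being vacuous since $M_0 = R$ vanishes in $\sg^\Z R$). The backward case $c_0 \le -1$ is symmetric, using the sequence from $M_i(c_0)$ to $M_i(c_0 + a)$ and running the inner induction on $i$ from $n-1$ down to $1$; here the first step $c_0 = -1$ works because the terms $M_{j'}(c_0 + a) = M_{j'}(a-1)$ are already covered by the base case. The main obstacle is the combinatorial bookkeeping in this double induction: one must verify at each step that the unknown middle terms of the AR sequence coincide exactly with the $M_{j'}(c_0)$ being extracted at that step, with every other term either already established by a prior step or projective (hence zero) in $\sg^\Z R$.
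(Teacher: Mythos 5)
The second half of your argument --- the base case via $\Om=[-1]$ on $\sg^\Z\!R$ and the double induction on the degree shift using the $(d-1)$-almost split sequences, splicing them into triangles and discarding the projective terms --- is essentially the paper's own induction, and the bookkeeping you describe does work. The problem is your reduction step. From Auslander's isomorphism $\End_R(S)\simeq S\ast G$ you infer that every indecomposable graded MCM $R$-module is, up to shift, a summand of $S$, i.e.\ that $\CM^\Z\!R=\add\{M_j(c)\mid 0\leq j\leq n-1,\ c\in\Z\}$. Auslander's theorem holds in any dimension, but this consequence is exactly the statement that $R$ has finite CM type, which is special to quotient \emph{surface} singularities ($d=2$); for the Veronese rings considered here, with $d=an\geq 3$, it is false --- indeed already the summands $\Om^pS(p)$ ($1\leq p\leq a-1$) of the tilting object $T$ are in general not in $\add\{S(c)\mid c\in\Z\}$, so the ``whence'' does not follow and the classification you rely on is not available.

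What is true, and what the paper uses, is the weaker but sufficient fact that $\add\{S(i)\mid i\in\Z\}$ is a $(d-1)$-cluster tilting subcategory of $\sg^\Z\!R$ (\cite{Iy07a}, and \cite{IT} for the graded setting). A cluster tilting subcategory generates the ambient triangulated category, since every object is built from it by finitely many extensions, so $\sg^\Z\!R=\thick\{S(i)\mid i\in\Z\}$ and it suffices to show $S(i)\in\thick T$ for all $i$. With the reduction justified this way, your induction goes through and coincides with the argument given in the paper.
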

\begin{proof}
	Since $\add\{S(i)\mid i\in\Z\}\subset\sg^\Z\!R$ is $(d-1)$-cluster tilting and thus generates $\sg^\Z\!R$, we only have to show $S(i)\in\thick T$ for all $i\in\Z$. The almost split sequence starting at $M_i$ above shows $M_i(a)\in\thick (T\oplus M_1(a)\oplus\cdots \oplus M_{i-1}(a))$ and similarly the one ending at $M_i(a-1)$ shows $M_i(-1)\in\thick(M_{i+1}(-1)\oplus\cdots\oplus M_{n-1}(-1)\oplus T)$. We deduce inductively $S(i)\in\thick T$ for all $i\in\Z$.
\end{proof}

We also need some information on the terms $M_i$ and $N_{i,l}$ as well as their syzygies.
\begin{Lem}\label{ind}
	We have the following for all $p\in\Z$.
	\begin{enumerate}
		\item\label{syz} The $p$-th syzygies of the terms in the almost split sequence and some projective modules form the almost split sequence
		\[ \xymatrix@!C=4mm@!R=2mm{
			&\Om^p M_{i+1}^\oplus\ar[rr]\ar[dr]&&\cdots\ar[rr]&&\Om^p M_{n-1}^\oplus\ar[rr]\ar[dr]&&R(-p+1)^\oplus\ar[rr]\ar[dr]&&\Om^p M_1(1)^\oplus\ar[rr]&&\cdots\ar[rr]\ar[dr]&&\Om^p M_{i-1}(a)^\oplus\ar[dr]&\\
			\Om^p M_i\ar[ur]&&\Om^pN_{i,i+1}\ar[ur]&& &&\Om^p N_{i,n-1}\ar[ur]&&\Om^p N_{i,n}(1)\ar[ur]&& && \Om^p N_{i,i+d-2}(a)\ar[ur]&&\Om^p M_i(a) } \]
		for all $1\leq i\leq n-1$.
		\item\label{gen} $\Om^pS$ is generated in degree $p$.
		\item\label{genN} For each $1\leq i\leq n-1$, and $i+1\leq l\leq i+d-2$, the module $\Om^pN_{i,l}$ is generated in degree $p$.
	\end{enumerate}
\end{Lem}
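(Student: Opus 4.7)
My plan is to prove (1), (2), (3) simultaneously by induction on $p \geq 0$. The base case $p=0$ is immediate: (1) is the given $(d-1)$-almost split sequence; (2) holds since $M_i$ is generated over $R$ by $(M_i)_0 = S_i$ (the $k$-span of monomials of usual $S$-degree $i$), so $S = \bigoplus_i M_i$ is generated in degree $0$; and (3) follows by un-shifting the displayed surjection $M_j(c)^\oplus \twoheadrightarrow N_{i,cn+j}(c)$ to $M_j^\oplus \twoheadrightarrow N_{i,l}$.

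For the induction step, assume (1)--(3) at level $p$. Decompose the level-$p$ almost split sequence into short exact sequences at each position $l$:
\[
0 \to \Om^p N_{i,l}(c) \to C_l \to \Om^p N_{i,l+1}(c') \to 0,
\]
with $c = \lfloor l/n\rfloor$, $c' = \lfloor(l+1)/n\rfloor$, and middle $C_l$ either $\Om^p M_{l \bmod n}(c)^\oplus$ (on ``flat'' segments where $c = c'$) or a shifted projective $R$-summand (at ``wrap-around'' positions where $c' = c+1$). On flat segments, inductive (2) and (3) ensure all three terms are generated in the common degree $p - c$, so Lemma 4.3 yields the corresponding short exact sequence of $(p+1)$-st syzygies, all generated in degree $p - c + 1$. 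At wrap-arounds, the middle $C_l$ is the projective cover of $\Om^p N_{i,l+1}(c')$ (both generated in degree $p - c'$), giving the identification $\Om^p N_{i,l}(c) \cong \Om^{p+1} N_{i,l+1}(c')$; combining this with the projective cover of $\Om^{p+1} N_{i,l+1}(c')$ by a further-shifted $R$-summand produces the level-$(p+1)$ short exact sequence there. Splicing these reconstructs the almost split sequence at level $p+1$, proving (1). Statements (2) and (3) at level $p+1$ follow since each $\Om^{p+1} M_j$ and $\Om^{p+1} N_{i,l}$ embeds in a projective cover generated in degree $p$, hence lies in degrees $\geq p+1$, and the new short exact sequences transmit generation in exactly degree $p+1$ between adjacent positions along the almost split sequence, starting from the endpoint term $\Om^{p+1} M_i$ whose generation degree is read off from the Koszul structure.

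The main obstacle is the wrap-around positions, where Lemma 4.3 fails because the left term has strictly greater generation degree than the other two. The resolution is the identification $\Om^p N_{i,l}(c) \cong \Om^{p+1} N_{i,l+1}(c')$ at each wrap-around, which lets the syzygy shift propagate cleanly through the induction and accounts for the ``shift by one'' of the projective middle at each inductive step.
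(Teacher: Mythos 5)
For $p\geq 0$ your argument is essentially the paper's: the same simultaneous induction on the three statements, the same splicing of the level-$p$ almost split sequence into short exact sequences, with Lemma \ref{pure} applied on the segments where all three terms share the generation degree $p-c$, and the projective-cover identification $\Om^pN_{i,cn-1}(c-1)\cong\Om^{p+1}N_{i,cn}(c)$ at the wrap-around positions (justified, as you note, by the kernel being generated in strictly higher degree, hence lying in the graded radical of the free middle term). One small point of vagueness: to get (2) and (3) at level $p+1$ you appeal to the endpoint $\Om^{p+1}M_i$ ``read off from the Koszul structure''; the clean source is rather the free middle term $R(-p-1+a)^\oplus$ surjecting onto $\Om^{p+1}M_1(a)$ in the new almost split sequence at $M_1$, after which the surjections $\Om^{p+1}M_{i-1}(a)^\oplus\to\Om^{p+1}M_i(a)$ and $\Om^{p+1}M_j(c)^\oplus\to\Om^{p+1}N_{i,cn+j}(c)$ propagate generation in degree $p+1$ by induction on $i$; higher syzygies over $R$ are not terms of the Koszul complex, so ``reading off'' their generation degree directly is not available.

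The genuine gap is that the lemma asserts the statements for all $p\in\Z$, and your induction only produces $p\geq 0$: cosyzygies are never addressed, and they do not follow formally from the upward induction. The paper treats $p<0$ by a separate duality argument on $\CM^\Z\!R$ using $(-)^\ast=\Hom_R(-,R)$: one has $M_i^\ast\simeq M_{n-i}(-1)$ (so $S^\ast\simeq S(-1)$) and, by uniqueness of almost split sequences, $N_{i,cn+j}^\ast\simeq N_{n-i,(a-c)n+(n-j-1)}(-1)$; since $\Om^pS$ (resp.\ $\Om^pN$) is generated in degree $p$ for $p\geq0$, the inclusion $\Om^{p+1}S\hookrightarrow R(-p)^\oplus$ is a left projective approximation (its cokernel is Cohen--Macaulay), and dualizing it gives a surjection $R(p)^\oplus\twoheadrightarrow\Om^{-p-1}S(-1)$, proving (2) (and similarly (3)) for negative exponents; part (1) for negative $p$ is likewise obtained by applying $(-)^\ast$ to the sequences already constructed. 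Without some such descent to negative syzygies, your proof only establishes the lemma for $p\geq0$.
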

\begin{proof}
	We give a proof along one induction. Consider the following statements for $p\geq0$.
	\begin{enumerate}
		\renewcommand{\labelenumi}{(\alph{enumi})$_{p}$}
		\renewcommand{\theenumi}{\alph{enumi}}
		\item\label{SS} The conclusion of (\ref{syz}) for $p$.
		\item\label{MM} $\Om^pM_i$ is generated in degree $p$ for all $1\leq i\leq n-1$.
		\item\label{NN} $\Om^pN_{i,l}$ is generated in degree $p$ for all $1\leq i\leq n-1$ and $i+1\leq l\leq i+d-2$.
	\end{enumerate}
	\newcommand{\refm}[2]{(\ref{#1})$_{#2}$}
	We have \refm{SS}{0} as the initial almost split sequence. We prove that \refm{SS}{p} gives \refm{MM}{p} and \refm{NN}{p}, and that \refm{SS}{p}, \refm{MM}{p} and \refm{NN}{p} imply \refm{SS}{p+1}, completing the induction.
	
	Suppose \refm{SS}{p}. By the last map $R(-p+a)^\oplus\to \Om^pM_1(a)$ of the almost split sequence at $M_1$, we see that $\Om^pM_1$ is generated in degree $p$. Similarly, by the last map $\Om^pM_{i-1}(a)^\oplus\to\Om^pM_i(a)$ of the almost split sequence at $M_i$, we see that $\Om^pM_i$ is generated in degree $p$ if $\Om^pM_{i-1}$ is, which inductively gives \refm{MM}{p}. Also there are surjections $R(-p+c)^\oplus\to \Om^p N_{i,cn}(c)$ for $1\leq c\leq a-1$ and $\Om^pM_j(c)^\oplus\to \Om^pN_{i,cn+j}(c)$ for $cn+j\in\{i+1,\ldots,i+d-2\}\setminus\{n,2n,\ldots,(a-1)n\}$, which together with \refm{MM}{p} implies \refm{NN}{p}.
	%$i+1\leq cn+j\leq i+d-2$ and $1\leq j\leq n-1$
	
	Let us next assume \refm{SS}{p}, \refm{MM}{p} and \refm{NN}{p}. Then the long exact sequence in \refm{SS}{p} is a splicing of short exact sequences of the form
	\begin{itemize}
		\item $0\to\Om^pN_{i,cn+j-1}(c)\to\Om^p M_j(c)^\oplus\to \Om^p N_{i,cn+j}(c)\to 0$ for some $1\leq j\leq n-1$ and $cn+j\in\{i+1,\ldots,i+d-2\}\setminus\{n,2n,\ldots,(a-1)n\}$, or
		\item $0\to \Om^pN_{i,cn-1}(c-1)\to R(-p+c)^\oplus\to \Om^pN_{i,cn}(c)\to0$ for some $1\leq c\leq a-1$.
	\end{itemize}
	By \refm{MM}{p} and \refm{NN}{p}, the terms in the exact sequence of the first type is generated in degree $p-c$. It follows from \ref{pure} that their syzygies form an exact sequence $0\to\Om^{p+1}N_{i,cn+j-1}(c)\to\Om^{p+1} M_j(c)^\oplus\to \Om^{p+1} N_{i,cn+j}(c)\to 0$. Also since $\Om^{p+1}N_{i,cn}(c)=\Om(\Om^{p}N_{i,cn}(c))=\Om^pN_{i,cn-1}(c-1)$ is generated in degree $p-c+1$ by \refm{NN}{p}, there is an exact sequence $0\to \Om^{p+1}N_{i,cn-1}(c-1)\to R(-p-1+c)^\oplus\to \Om^{p+1}N_{i,cn}(c)\to0$. Connecting these exact sequences of syzygies, we obtain the desired sequence in \refm{SS}{p+1}. This finishes the proof for $p\geq0$.
	
	Now we discuss negative syzygies by means of the duality $(-)^\ast=\Hom_R(-,R)$ on $\CM^\Z\!R$.\\
	(\ref{gen})  Note that we have $M_i^\ast\simeq M_{n-i}(-1)$ in $\md^\Z\!R$ for $1\leq i\leq n-1$, thus $S^\ast\simeq S(-1)$. Let $p\geq0$. If $\Om^pS$ is generated in degree $p$, then there is a monomorphism $\Om^{p+1}S\hookrightarrow R(-p)^\oplus$ which is a left projective approximation since its cokernel is $\Om^pS\in\CM^\Z\!R$. Applying $(-)^\ast$ gives a surjection $R(p)^\oplus\twoheadrightarrow \Om^{-p-1}S(-1)$, which shows that $\Om^{-p-1}S$ is generated in degree $-p-1$ for each $p\geq0$, as desired.\\
	(\ref{genN})  Similarly to above we compute $N_{i,l}^\ast$. By the uniquenss of almost split sequences, applying $(-)^\ast$ to the almost split sequence starting at $M_i$ gives the almost split sequence ending at $M_i^\ast=M_{n-i}(-1)$. It follows that we have $N_{i,cn+j}^\ast=N_{n-i,(a-c)n+(n-j-1)}(-1)$. Now let $p\geq0$. Then we have an injective left projective approximation $\Om^{p+1}N_{n-i,(a-c)n+(n-j-1)}\hookrightarrow R(-p)^\oplus$ since $\Om^pN_{n-i,(a-c)n+(n-j-1)}$ is generated in degree $p$. Applying $(-)^\ast$ gives a surjection $R(p)^\oplus\twoheadrightarrow \Om^{-p-1}N_{i,cn+j}(-1)$, which shows $\Om^{-p-1}N_{i,cn+j}$ is generated in degree $-p-1$ for all $p\geq0$.\\
	(\ref{syz})  This is shown using $(-)^\ast$. Since this assertion is not used later in this paper, we leave the details to the reader.
\end{proof}

Now we give the following computation which is crucial.
\begin{Lem}\label{conc}
	\begin{enumerate}
		\item\label{pS} $\sHom_R(S,\Om^pS)$ is concentrated in degree $p$ for each $p\geq0$.
		\item\label{pN} $\sHom_R(S,\Om^pN_{i,l})$ is concentrated in degree $p$ for each $p\geq0$ and $i,l$.
	\end{enumerate}
\end{Lem}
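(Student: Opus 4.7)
\emph{Plan.} I would prove (1) and (2) simultaneously by induction on $p \ge 0$, using the short exact sequences obtained by splicing the $(d-1)$-almost split sequence of \ref{ind}(\ref{syz}).

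\emph{Base case $p = 0$.} Since $M_0 = R$ is projective, $\sHom_R(R, -) = \sHom_R(-, R) = 0$, so statement (1) reduces to computing $\sHom_R(M_i, M_j)$ as a graded module for $1 \le i, j \le n - 1$. A direct analysis identifies $\Hom_R^\Z(M_i, M_j(c))$ with multiplication by polynomials in $S_{cn + j - i}$, which is zero for $c < 0$. For $c \ge 1$, any such polynomial $f$ factors as $f = f_1 f_2$ with $f_1 \in S_{n - i}$ and $f_2 \in S_{(c-1)n + j}$, giving a factorization
\[
M_i \xrightarrow{\cdot f_1} R(1) \xrightarrow{\cdot f_2} M_j(c)
\]
through a free module; hence $\sHom_R(M_i, M_j(c)) = 0$ for all $c \ne 0$. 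Statement (2) at $p = 0$ then follows by induction on $l$, using the two types of short exact sequences from splicings of \ref{ind}(\ref{syz}) at $p = 0$, with boundary $N_{i, i} = M_i$.

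\emph{Inductive step $p \to p + 1$.} At stage $p + 1$, the short exact sequence
\[
0 \to \Om^{p+1} N_{i, cn - 1}(c - 1) \to R(c - p - 1)^{\oplus} \to \Om^{p+1} N_{i, cn}(c) \to 0
\]
has projective middle term, so in $\sg^\Z R$ we obtain an isomorphism
\[
\Om^{p+1} N_{i, cn}(c) \simeq \Om^p N_{i, cn - 1}(c - 1),
\]
which transports concentration in degree $p$ (on the right, by the induction hypothesis) to concentration in degree $p + 1$ (on the left). Combined with the first-type short exact sequences $0 \to \Om^{p+1} N_{i, l-1}(c) \to \Om^{p+1} M_j(c)^{\oplus} \to \Om^{p+1} N_{i, l}(c) \to 0$, the associated long exact sequences in $\sHom_R(S, -)$, and a sub-induction on $l$ seeded at the boundary cases of the fundamental sequence, one concludes concentration in degree $p + 1$ for all $\Om^{p+1} M_j$ and $\Om^{p+1} N_{i, l}$.

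\emph{Main obstacle.} The principal subtlety is that the long exact sequences associated with the first-type short exact sequences involve connecting maps mixing $\sHom_R(S, \Om^{p+1}\,\cdot\,)$ with $\sHom_R(S, \Om^p\,\cdot\,)$, so one must verify that these connecting maps do not force concentration outside of the expected degree $p+1$. Since all relevant modules $\Om^p M_j, \Om^p N_{i, l}$ are generated in degree $p$ by \ref{ind}(\ref{gen}), (\ref{genN}), the candidate concentration degrees shift consistently under the grading twists $(c)$, and the combinatorics of the fundamental sequence constrain the sub-induction on $l$ enough to close the argument.
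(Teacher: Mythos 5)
Your plan is essentially the paper's own proof: a simultaneous induction on $p$ over the modules $\Om^pM_j$ and $\Om^pN_{i,l}$, run along the two kinds of short exact sequences spliced out of the syzygies of the almost split sequences (\ref{ind}(\ref{syz})), with the projective-middle-term sequences producing the degree shift $p\mapsto p+1$ and the remaining sequences handled via the long exact sequence for $\sHom_R(S,-)$. Two comments. First, your seeding is genuinely different and is a fine alternative: you verify the case $p=0$ directly by factoring any multiplication map $M_i\to M_j(c)$ with $c\geq 1$ through the free module $R(1)$, whereas the paper never computes $\sEnd_R(S)$ explicitly but instead starts one step earlier from the rigidity statement $\sHom_R(S,\Om^{-1}N_{i,l})=\Ext^1_R(S,N_{i,l})=0$ and lets the inductive machinery produce the degree-$0$ statement; your version is more concrete and self-contained. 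Second, the step you flag as the ``main obstacle'' is exactly the crux and should be stated rather than gestured at: for the first-type sequences the long exact sequence only traps $\sHom_R(S,\Om^{p+1}N_{i,l})$ between a term concentrated in degree $p+1$ and the connecting term $\sHom_R(S,\Om^{p}N_{i,l-1})$ concentrated in degree $p$, and the lower degree is killed precisely because $\Om^{p+1}N_{i,l}$ is generated in degree $p+1$ (\ref{ind}(\ref{genN})) while $S$ is generated in degree $0$, so $\Hom_R(S,\Om^{p+1}N_{i,l})$, hence its stable quotient, vanishes in degrees $\leq p$; ``consistent shifting of twists'' by itself does not exclude anything. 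Relatedly, make explicit how the level-$(p+1)$ statements for the $\Om^{p+1}M_j$ are obtained, since in your long exact sequences they occur only as connecting terms: as in the paper, one runs a sub-induction on the index $j$ using the last spliced sequence of each almost split sequence, seeded by the stable identification of $\Om^{p+1}M_1$ with a twist of $\Om^{p}N_{1,d-2}$, and only then feeds these into the sub-induction on $l$ for the $N$'s. With these two points written out, your argument coincides with the paper's.
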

\begin{proof}
	We proceed by induction on $p$. Consider the following statements for $p\geq0$.
	\begin{enumerate}
		\renewcommand{\labelenumi}{(\alph{enumi})$_{p}$}
		\renewcommand{\theenumi}{\alph{enumi}}
		\item\label{MMM} $\sHom_R(S,\Om^pM_i)$ is concentrated in degree $p$ for all $1\leq i\leq n-1$.
		\item\label{NNN} $\sHom_R(S,\Om^pN_{i,l})$ is concentrated in degree $p$ for all $1\leq i\leq n-1$ and $i+1\leq l\leq i+d-2$.
	\end{enumerate}
	\newcommand{\refm}[2]{(\ref{#1})$_{#2}$}
	%Since $\sEnd_R(S)$ is obtained by deleting the vertex $0$ in the McKay quiver, it is concentrated in degree $0$, so we have \refm{MMM}{0}. 
	We start with \refm{NNN}{-1}, which we understand as a true statement by $\sHom_R(S,\Om^{-1}N_{i,l})=\Ext^1_R(S,N_{i,l})=0$. We prove \refm{NNN}{p-1}+\refm{MMM}{p} $\Rightarrow$ \refm{NNN}{p} and \refm{NNN}{p} $\Rightarrow$ \refm{MMM}{p+1}, which completes the induction.
	
	Suppose \refm{NNN}{p-1} and \refm{MMM}{p}. Note that all the terms $\Om^pN_{i,l}$ appear either as
	\begin{itemize}
		\item $0\to \Om^pN_{i,cn+j-1}\to \Om^pM_j^{\oplus}\to \Om^pN_{i,cn+j}\to0$ where $1\leq i,j\leq n-1$, or
		\item $0\to \Om^pN_{i,cn-1}(-1)\to R(-p)^\oplus\to\Om^pN_{i,cn}\to 0$
	\end{itemize}
	in the exact sequence in \ref{ind}(\ref{syz}).
	Applying $\sHom_R(S,-)$ to the exact sequence of the second kind yields an isomorphism of graded $R$-modules $\sHom_R(S,\Om^pN_{i,cn})\xrightarrow{\simeq}\sHom_R(S,\Om^{p-1}N_{i,cn-1})(-1)$. Since the right-hand-side is concentrated in degree $p-1+1$ by \refm{NNN}{p-1} we see that so is the left-hand-side.
	We next apply $\sHom_R(S,-)$ to the first kind. It gives an exact sequence
	\[ \xymatrix{ \sHom_R(S,\Om^pM_j^\oplus)\ar[r]&\sHom_R(S,\Om^pN_{i,cn+j})\ar[r]&\sHom_R(S,\Om^{p-1}N_{i,cn+j-1}) }, \]
	in which the left term is concentrated in degree $p$ by assumption \refm{MMM}{p}, and the right term is concentrated in degree $p-1$ by \refm{NNN}{p-1}. It follows that the middle term is concentrated in degree $p-1$ and $p$. We claim that $\sHom^\Z_R(S,\Om^pN_{i,cn+j}(p-1))=0$, which will complete the induction step. Since $S$ is generated in degree $0$ and $\Om^pN_{i,cn+j}(p-1)$ in degree $1$ by \ref{ind}(\ref{genN}), the graded vector space $\Hom_R(S,\Om^pN_{i,cn+j}(p-1))$ concentrates in degree $\geq1$, hence $\sHom^\Z_R(S,\Om^pN_{i,cn+j}(p-1))=0$.
	
	Next we assume \refm{NNN}{p} and prove \refm{MMM}{p+1} by induction on $i$. By the last short exact sequence of the $p$-th syzygy of the almost split sequence at $M_1$, we see that $\Om^{p+1}M_1=\Om^pN_{1,d-2}(-1)$, thus $\sHom_R(S,\Om^{p+1}M_1)=\sHom_R(S,\Om^pN_{1,d-2})(-1)$ is concentrated in degree $p+1$ by \refm{NNN}{p}. Assume $2\leq i\leq n-1$ and consider the same last short exact sequence
	\[ \xymatrix{ 0\ar[r]& \Om^pN_{i,d-2}\ar[r]&\Om^{p}M_{i-1}^\oplus\ar[r]&\Om^{p}M_i\ar[r]&0 } \]
	in the almost split sequence at $\Om^pM_i$. Applying $\sHom_R(S,-)$ gives an exact sequence
	\[ \xymatrix{ \sHom_R(S,\Om^{p+1}M_{i-1}^\oplus)\ar[r]&\sHom_R(S,\Om^{p+1}M_{i})\ar[r]&\sHom_R(S,\Om^{p}N_{i,d-2}) }, \]
	in which the left term is concentrated in degree $p+1$ by induction hypothesis, and the right term is concentrated in degree $p$ by \refm{NNN}{p}. It remains to show $\sHom_R^\Z(S,\Om^{p+1}M_i(p))=0$. Similarly as above, the fact that $S$ is generated in degree $0$ and $\Om^{p+1}M_i(p)$ in degree $1$ shows $\Hom_R(S,\Om^{p+1}M_i(p))$ is concentrated in degree $\geq1$, so $\sHom^\Z_R(S,\Om^{p+1}M_i(p))=0$.
\end{proof}

Now we are ready to prove our main result \ref{VER} of this section.
\begin{proof}[Proof of \ref{VER}]
	We have seen in \ref{seisei} that $T=S\oplus \Om S(1)\oplus \cdots\oplus \Om^{a-1}S(a-1)$ generates $\sg R$. Also we known that $\sEnd^\Z_R(T)$ has finite global dimension since it is a triangular matrix algebra with diagonal entries $\sEnd_R(S)$ which has finite global dimension.
	To show that $T$ is a tilting object with $(d-a-1)$-representation infinite endomorphism ring, we have to prove
	\begin{itemize}
		\item $\sHom_R^\Z(T,T[p])=0$ for all $p\neq0$,
		\item $\sHom_R^\Z(T,\nu_{d-a-1}^{-q}T[p])=0$ for all $q\geq0$ and $p\neq0$, and
		\item $\sHom_R^\Z(\nu T,T[d-a-1+p])=0$ for all $p>0$ (so that $\gd\sEnd_R^\Z(T)\leq d-a-1$).
	\end{itemize}
	Computing the Serre functors shows that we have $\nu_{d-a-1}^{-1}=(a)[-a]$ on $\sg^\Z\!R$. In view of the formula $T=S\oplus \Om S(1)\oplus \cdots\oplus \Om^{a-1}S(a-1)$, it is enough to prove $\sHom_R^\Z(S,\Om^qS(q)[p])=0$ for all $q>-a$ and $p\neq0$. Note that this is the degree $q$ part of $\sHom_R(S,\Om^{q-p}S)$.
	
	If $p-q\leq0$, then we see $\sHom_R(S,\Om^{q-p}S)_q=0$ unless $p=0$ by \ref{conc}(\ref{pS}).
	
	If $1\leq p-q<d-1$, then $\sHom_R(S,\Om^{q-p}S)=\Ext_R^{p-q}(S,S)=0$ since $S\in\sg R$ is $(d-1)$-cluster tilting.
	
	If $d-1\leq p-q$, then by Serre duality we have $D\sHom_R^\Z(S,\Om^{q-p}S(q))=\sHom_R^\Z(\Om^{q-p}S(q),S(-a)[d-1])=\sHom_R(S,\Om^{p-q-d+1}S)_{-a-q}$. Since $p-q-d+1\geq0$ we see by \ref{conc}(\ref{pS}) that this is $0$ unless $p-q-d+1=-a-q$, or $p=d-a-1$. But it is impossible to have $q>-a$, $p-q\geq d-1$, and $p=d-a-1$ at the same time.
	
	Finally, since the $a$-th root of $\nu_{d-a-1}$ on $\Db(\mod A)$ is given by the automorphism corresponding to $\Om(1)=(1)[-1]$ on $\sg^\Z\!R$, its strictness of is a consequence of the fact that $T$ is obtained as an $\Om(1)$-orbit.
\end{proof}

\begin{Rem}
In the course of the proof of \ref{VER}, we have directly proved that the endomorphism ring $A$ is $(d-a-1)$-representation infinite. When $a=1$, we could have proved this using \ref{-1RI}, see \ref{trunc}.
\end{Rem}

As a consequence of $(d-a-1)$-represenation infiniteness of $A$, we deduce that we obtain (twisted) Calabi-Yau algebras from Veronese subrings. To state the result let us recall some relevant notions.
\begin{Def}[{cf. \cite{Gi,Ke11}}]
	A graded algebra $C=\bigoplus_{i\in\Z}C_i$ is {\it twisted $m$-Calabi-Yau} (CY) and {\it has Gorenstein parameter $l$} if
	\begin{itemize}
		\item $C$ is smooth, that is, $C\in\per^\Z\!C^e$,
		\item there is an isomorphism $\RHom_{C^e}(C,C^e)[m]\simeq {}_1C_\a(l)$	in $\D(\Mod^\Z\!C^e)$ for some homogeneous automorphism $\a$ of $C$.
	\end{itemize}
	%We refer to this automorphism $\a$ as the {\it Nakayama automorphism}, which is uniquely determined up to inner automorphism.
	%We say that the Nakayama automorphism $\a$ is {\it almost trivial} if the twist functor $(-)_\a$ is functorially isomorphic to the identity functor on the {\it graded} module category $\Md^\Z\!C$.
\end{Def}
When $C$ is positively graded and each $C_i$ is finite dimensional, the above CY property is equivalent to the following Artin-Schelter type condition by \cite[5.2, 5.15]{RR}, where $S=C/J_C$ is the quotient of $S$ by its graded Jacobson radical $J_C$;
\begin{itemize}
	\item $S$ is separable over $k$,
	\item $\gd(\Mod^\Z\!C)\leq m$,
	\item $\RHom_C(S,C)[m]\simeq S(l)$ in $\rD(\Mod^\Z\!C^\op)$.
\end{itemize}

Let us note one observation on (twisted) CY properties of a graded algebra and of its covering. Let $C=\bigoplus_{i\in\Z}C_i$ be a graded algebra and $a>0$. Then its $a$-th {\it covering} (or the {\it quasi-Veronese algebra}) is the graded algebra
\[ C^{[a]}=\bigoplus_{i\in\Z}C^{[a]}_i,\qquad C^{[a]}_i=\begin{pmatrix}C_{ia}&C_{ia-1}&\cdots&C_{ia-a+1}\\C_{ia+1}&C_{ia}&\cdots&C_{ia-a+2}\\ \vdots&\vdots&\ddots&\vdots\\ C_{ia+a-1}&C_{ia+a-2}&\cdots&C_{ia}\end{pmatrix} \]
with the natural multiplication. Then we have an equivalence of categories with compatible degree shifts
\[ \xymatrix{ \Mod^\Z\!C\ar[r]^-\simeq\ar@(dl,dr)[]_-{(a)}&\Mod^\Z\!C^{[a]}\ar@(dl,dr)[]_-{(1)}, & M=\disoplus_{i\in\Z}M_i\ar@{|->}[r]&\disoplus_{i\in\Z}(M_{ia},M_{ia-1},\ldots,M_{ia-a+1})}, \]
and similarly for $\Mod^\Z\!C^\op\xsimeq\Mod^\Z(C^{[a]})^\op$.
\begin{Lem}\label{qv}
	Let $C=\bigoplus_{i\geq0}C_i$ be a positively graded algebra such that each $C_i$ is finite dimensional, and let $a>0$ be an integer. Then $C$ is twisted $m$-CY of Gorenstein parameter $al$ if and only if $C^{[a]}$ is twisted $m$-CY of Gorenstein parameter $l$.
\end{Lem}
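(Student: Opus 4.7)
The plan is to reduce the twisted Calabi-Yau conditions on $C$ and on $C^{[a]}$ to their Artin-Schelter reformulations recalled just before the statement, and then transport these reformulations through the graded Morita equivalence $F\colon \Mod^\Z\! C \simeq \Mod^\Z\! C^{[a]}$ described in the excerpt.

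First I would observe that $F$ is a graded Morita equivalence: it is induced by the projective generator $P = C \oplus C(1) \oplus \cdots \oplus C(a-1)$ of $\Mod^\Z\! C$ whose graded endomorphism ring is $C^{[a]}$, so that $F \simeq \Hom_C^\Z(P, -)$. A parallel equivalence $F^\op$ relates $\Mod^\Z\! C^\op$ with $\Mod^\Z\! (C^{[a]})^\op$. Both equivalences intertwine the degree shifts via $F\circ(a)\simeq(1)\circ F$ and $F^\op\circ(a)\simeq(1)\circ F^\op$, as already noted in the excerpt. In particular the graded Jacobson radicals correspond, so the semisimple quotients $S = C/J_C$ and $S' = C^{[a]}/J_{C^{[a]}}$ are identified, separability of $S$ over $k$ is equivalent to separability of $S'$ over $k$, and the global dimensions $\gldim(\Mod^\Z\! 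C)$ and $\gldim(\Mod^\Z\! C^{[a]})$ agree.

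The crux is transporting the Artin-Schelter isomorphism. Because $F$ is a Morita equivalence coming from a projective generator, it extends to a triangle equivalence of graded derived categories and there is a natural isomorphism $\RHom_{C^{[a]}}(FM, C^{[a]}) \simeq F^\op\RHom_C(M, C)$ for $M \in \Db(\Mod^\Z\! C)$. Taking $M = S$ and using $FS = S'$ together with the shift compatibility $F\circ(al) \simeq (l)\circ F$, the condition $\RHom_C(S, C)[m] \simeq S(al)$ in $\rD(\Mod^\Z\! C^\op)$ becomes precisely $\RHom_{C^{[a]}}(S', C^{[a]})[m] \simeq S'(l)$ in $\rD(\Mod^\Z\! (C^{[a]})^\op)$. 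The same argument applied to $F^{-1}$ yields the converse, completing the proof via the Artin-Schelter reformulation.

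The main obstacle I anticipate is verifying cleanly that $F$ is truly a graded Morita equivalence of the asserted form: one must confirm that the graded endomorphism ring of $P$ with the correct grading convention really is $C^{[a]}$ as defined in the excerpt, and that the intertwining $F \circ (a) \simeq (1) \circ F$ is strict enough that the shift of $S$ by $al$ is carried to the shift of $S'$ by exactly $l$. Once these compatibilities are pinned down, the Artin-Schelter side of the argument is essentially formal.
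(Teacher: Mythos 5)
Your overall route is the same as the paper's: both reduce the twisted CY property to the Artin--Schelter characterization and transport it through the covering equivalence $\Mod^\Z\!C\simeq\Mod^\Z\!C^{[a]}$; the paper just does the transport concretely via the idempotents $e_i$ and the identifications $S(i)\mapsto e_iT$, $C(i)\mapsto e_iC^{[a]}$, where you phrase it as a functorial compatibility of $\RHom(-,C)$ with the equivalence. One step of your write-up is, however, wrong as stated: $FS\neq S'$ when $a>1$. The equivalence sends $S$ to $e_0T$, where $T=C^{[a]}/J_{C^{[a]}}$ is a product of $a$ copies of $S$, and it is $\bigoplus_{i=0}^{a-1}S(i)$ that is sent to $T$. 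Consequently, taking $M=S$ only yields the ``$e_0$-component'' $\RHom_{C^{[a]}}(e_0T,C^{[a]})[m]\simeq (Te_0)(l)$, not the full AS isomorphism $\RHom_{C^{[a]}}(T,C^{[a]})[m]\simeq T(l)$. The gap is easily closed: apply your transported isomorphism to every shift $S(i)$, $0\leq i\leq a-1$ (using that the AS condition for $C$ is invariant under degree shift), and for the converse multiply the AS isomorphism for $C^{[a]}$ by $e_0$ on both sides --- this is precisely how the paper's proof assembles, respectively extracts, the condition.

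A smaller point of the same nature, which you partly anticipated: $P=C\oplus C(1)\oplus\cdots\oplus C(a-1)$ is not a projective generator of $\Mod^\Z\!C$ in the graded sense (one needs all shifts $C(j)$, $j\in\Z$), and $F$ is not $\Hom_C^\Z(P,-)$ but the covering functor $M\mapsto\bigoplus_{i\in\Z}\Hom_C^\Z(P,M(ia))$, with a differently indexed analogue $F^{\op}$ on left modules. With these conventions pinned down, your key isomorphism $\RHom_{C^{[a]}}(FM,C^{[a]})\simeq F^{\op}\RHom_C(M,C)$ does hold (it is checked on the compact generators $C(j)$, where both sides give $C^{[a]}e_t(-q)$ for $j=qa+t$), so the argument goes through once the $S$ versus $\bigoplus_i S(i)$ issue above is repaired.
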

\begin{proof}
	We show the equivalence of the Artin-Schelter conditions. Since $T:=C^{[a]}/J_{C^{[a]}}$ is some product of $S:=C/J_C$, the separability is clearly equivalent. Secondly, by the equivalence $\Mod^\Z\!C\xsimeq\Mod^\Z\!C^{[a]}$, the finiteness of global dimension is also equivalent. Finally we consider the extensions. For $0\leq i\leq a-1$, let $e_i$ be the idempotent of $C^{[a]}$ corresponding to the $i$-th factor. Then for each $0\leq i\leq a-1$ the equivalence $\Mod^\Z\!C\xsimeq\Mod^\Z\!C^{[a]}$ takes $S(i)$ to $e_iT$ and $C(i)$ to $e_iC^{[a]}$, so we have $\RHom_C^\Z(S,C(i))\xsimeq\RHom_{C^{[a]}}^\Z(e_0T,e_iC^{[a]})$.
	Therefore $\RHom_{C^{[a]}}(T,C^{[a]})[m]\simeq T(l)$ implies, by multiplying $e_0$ from both sides, $\RHom_C(S,C)[m]\simeq S(al)$. The converse is similar; if we have $\RHom_C(S,C)[m]\simeq S(al)$ then $\RHom_C(\bigoplus_{i=0}^{a-1}S(i),\bigoplus_{i=0}^{a-1}C(i))[m]\simeq\End_C^\Z(\bigoplus_{i=0}^{a-1}S(i))(al)$, thus the equivalence of graded module categories gives $\RHom_{C^{[a]}}(T,C^{[a]})\simeq T(l)$.
\end{proof}

An important category associated to a twisted CY algebra is
\[ \qper^\Z\!C:=\per^\Z\!C/\Db(\fl^\Z\!C), \]
the Verdier quotient of the graded prefect derived category by the derived category of finite length graded modules. When $C$ is Noetherian (or more generally graded coherent), this is exactly the derived category of Serre quotient $\mod^\Z\!C/\fl^\Z\!C$, the non-commutative projective scheme in the sense of \cite{AZ}.
%Now we can state the following result providing $d$-representation infinite algebras with strict roots of $\nu_d$.
%\begin{Prop}[{\cite[4.12]{MM}\cite[4.9, 4.10]{ha3}}]
%	Let $S=\bigoplus_{i\in\Z}S_i$ be a positively graded algebra such that each $S_i$ is finite dimensional. Suppose that $S$ is twisted $(d+1)$-CY algebra of Gorenstein parameter $a$ and has almost trivial Nakayama automorphism.
%	\begin{enumerate}
	%		\item $T=S\oplus S(1)\oplus\cdots\oplus S(a-1)\in\qper^\Z\!S$ is tilting.
	%		\item $A=\End_{\qper^\Z\!S}(T)=\End_S^\Z(T)$ is $d$-representation infinite.
	%		\item We have $U^{\lotimes_Aa}\simeq\RHom_A(DA,A)[d]$ for $U=\RHom_{\qper^\Z\!S}(T,T(1))=\Hom_S^\Z(T,T(1))$.
	%	\end{enumerate}
%	Therefore $-\lotimes_AU$ gives a strict $a$-th root of $\nu_d^{-1}$ on $\D^b(\md A)$. %$\C(S^\dg)=\C_{d+a}^{(1/a)}(A)$.
%\end{Prop}

%A strict $a$-th root of $\nu_d$ over a $d$-representation infinite algebra yields a nice cluster category.
%\begin{Prop}
%Let $S^\dg$ be the negative dg algebra defined by $(S^\dg)^{-i}=S_{i}$ and with trivial differentials.
%\begin{enumerate}
%	\item\cite[5.2]{ha3} $S^\dg$ is twisted $(d+a+1)$-CY as a dg algebra.
%	\item\cite[6.2]{ha3} There exists a triangle eqiuvalence $\per S^\dg/\D^b(S^\dg)\simeq\C_{d+a}^{(1/a)}(A)$.
%\end{enumerate}
%\end{Prop}

\begin{Cor}\label{qgr}
	Let $a\geq1$ and $n\geq2$ be integers, $d=na$, and $R=k[x_1,\ldots,x_d]^{(n)}$ the Veronese subring. Put
	\[ \G=\bigoplus_{i\geq0}\sHom_R(S,\Om^iS). \]
	\begin{enumerate}
		\item $\G$ is twisted $(d-a)$-CY algebra of Gorenstein parameter $a$.
		%	\item $\G$ is quadratic.
		\item There exists an equivalence
		\[ \xymatrix{ \qper^\Z\!\G \ar@{-}[r]^-\simeq&\sg^\Z\!R } \]
		restricting to their canonical $(d-1)$-cluster tilting subcategories
		\[ \xymatrix{ \add\{\G(i)[i]\mid i\in\Z\}\ar@{-}[r]^-\simeq& \add\{S(i)\mid i\in\Z\} }. \]
	\end{enumerate}	
\end{Cor}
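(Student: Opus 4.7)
The plan is to identify $\Ga$ with the Calabi-Yau completion associated to the strict $a$-th root pair $(U,P)$ provided by Theorem \ref{VER}(2), and then obtain both assertions from the general theory of such completions \cite{Ke11,Ha7}.

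First I would use the tilting equivalence $\sg^\Z\!R \xsimeq \per A$ of Theorem \ref{VER}(3) to transport $S \mapsto P=\Hom_{\sg R}^\Z(T,S)$ and the autoequivalence $\Om(1)$ to $-\lotimes_A U$. By Lemma \ref{conc}(1), the graded vector space $\sHom_R(S,\Om^i S)$ concentrates in degree $i$, so each homogeneous piece rewrites as
\[
  \sHom_R(S,\Om^i S) \;=\; \sHom_R^\Z(S,\Om^i S(i)) \;\cong\; \Hom_{\per A}(P, P\lotimes_A U^i).
\]
Summing over $i$ produces an isomorphism of graded algebras $\Ga \cong \bigoplus_{i\geq 0}\Hom_{\per A}(P, P\lotimes_A U^i)$, which I would identify with the Calabi-Yau completion along the strict root pair $(U,P)$ in the sense of \cite{Ha7}. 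The subtle point is to check that the multiplication on $\Ga$, coming from composition $S\to\Om^i S\to\Om^{i+j}S$ in $\sg R$, agrees with the tensor concatenation $U^i\lotimes_A U^j\to U^{i+j}$ on the completion side; this is the main obstacle of the proof, and once granted everything below is an application of cited results.

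Given this identification, part (1) follows from Keller's theorem on Calabi-Yau completions of smooth algebras \cite{Ke11}, in the refinement for strict root pairs developed in \cite{Ha7}: since $A$ is $(d{-}a{-}1)$-representation infinite it is smooth, and completing along an $a$-th root of $\nu_{d-a-1}^{-1}$ yields a twisted $(d-a)$-CY algebra. The reason the Gorenstein parameter is $a$ rather than $1$ is that we complete along the root $U$ itself, not along $U^a=\nu_{d-a-1}^{-1}$; this is exactly the behaviour recorded by the quasi-Veronese correspondence of Lemma \ref{qv}.

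For the equivalence in (2), I would invoke the theorem of Minamoto-Mori \cite{MM} (in the form used in \cite{Ha3}), which gives a triangle equivalence $\qper^\Z\!\Ga \simeq \Db(\mod A)$ for a Calabi-Yau completion of a smooth algebra of the above type; chaining with $\Db(\mod A)\simeq\sg^\Z\!R$ from Theorem \ref{VER}(3) produces the desired equivalence. Finally, the cluster tilting correspondence is obtained by tracing canonical objects: the subcategory $\add\{S(i)\mid i\in\Z\}\subset\sg^\Z\!R$ corresponds to $\add\{P\lotimes_A U^i[i]\mid i\in\Z\}\subset\per A$ by Theorem \ref{iVER}(4), and by \cite{Ha3} the latter matches $\add\{\Ga(i)[i]\mid i\in\Z\}\subset\qper^\Z\!\Ga$, because the degree shift on the $\Ga$-side is intertwined with the $U$-action on the $A$-side under the equivalence.
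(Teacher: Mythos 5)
Your part (2) follows the paper's proof essentially verbatim: the equivalence $\qper^\Z\!\Ga\simeq\Db(\mod A)$ via Minamoto--Mori \cite{MM} (in the form of \cite{Ha3}), composition with $\Db(\mod A)\simeq\sg^\Z\!R$ from \ref{VER}, and the restriction to cluster tilting subcategories by matching the degree shift on the $\Ga$-side with the $U$-action, using \cite{Iy07a} and \cite{Ha3} for the cluster tilting statements. For part (1), however, your route differs from the paper's, and the difference matters. You identify $\Ga\cong\bigoplus_{i\geq0}\Hom_{\per A}(P,P\lotimes_AU^i)$ via \ref{conc}(1) and the tilting equivalence, call this the Calabi--Yau completion of the strict root pair, and then quote \cite{Ha7} for the statement that such a completion is twisted $(d-a)$-CY of Gorenstein parameter $a$. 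The paper instead avoids the root-pair completion machinery entirely here: it uses the published fact (\cite[4.8]{Ke11}, \cite[4.36]{HIO}, \cite[3.3]{AIR}) that the $(d-a)$-preprojective algebra $\Pi=\bigoplus_{l\geq0}\Hom_A(A,\nu_{d-a-1}^{-l}A)$ of the $(d-a-1)$-representation infinite algebra $A$ is $(d-a)$-CY of Gorenstein parameter $1$, observes via $T=S\oplus\Om S(1)\oplus\cdots\oplus\Om^{a-1}S(a-1)$ that $\Pi=\bigoplus_{i\geq0}\sHom_R^\Z(T,\Om^{ia}T(ia))$ is exactly the $a$-th covering (quasi-Veronese) $\Ga^{[a]}$, and then transfers the twisted CY property with parameter $a$ back to $\Ga$ by Lemma \ref{qv}. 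So in the paper Lemma \ref{qv} carries the actual logical weight, whereas in your argument it appears only as an explanatory remark and the weight rests on \cite{Ha7}, which is unpublished ("in preparation") and whose precise statement is not available in this paper.

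This is also where your admitted obstacle sits: you grant, without proof, that the composition product on $\bigoplus_i\sHom_R(S,\Om^iS)$ agrees with the tensor-algebra structure of the completion. The paper's argument is arranged precisely so that only composition products are ever compared across the equivalence (the orbit-algebra descriptions of $\Pi$ and of $\Ga^{[a]}$ both carry composition multiplication, and the comparison of the orbit algebra with Keller's tensor-algebra completion is exactly the content of the cited results for higher representation infinite algebras). If you want a proof that works within the present paper, you should replace the appeal to \cite{Ha7} by this covering argument: show $\Pi\cong\Ga^{[a]}$ as graded algebras (using \ref{conc} and the compatibility of $\nu_{d-a-1}^{-1}$ with $(a)[-a]$), and then invoke Lemma \ref{qv}. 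As written, your part (1) is plausible but defers its key step to an unavailable reference together with an unverified identification.
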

\begin{proof}
	(1)  Now first that by \ref{conc}(\ref{pS}) we can write $\G=\bigoplus_{i\geq0}\sHom_R^\Z(S,\Om^iS(i))$ as the Ext-algebra in the graded stable category $\smod^\Z\!R$. Since $A=\sEnd^\Z_R(T)$ is $(d-a-1)$-representation infinite, its $(d-a)$-preprojective algebra given by $\Pi=\bigoplus_{l\geq0}\Hom_A(A,\nu_{d-a-1}^{-l}A)$ is $(d-a)$-CY of Gorenstein parameter $1$ (\cite[4.8]{Ke11}, see also \cite[4.36]{HIO}\cite[3.3]{AIR}). By the compatibility of the autoequivalences $\nu_{d-a-1}^{-1}$ and $(a)[-a]$ under the equivalence $\Db(\mod A)\simeq\sg^\Z\!R$, we have $\Pi=\bigoplus_{i\geq0}\sHom_R^\Z(T,\Om^{ia}T(ia))$ as graded algebras. In view of the formula $T=S\oplus \Om S(1)\oplus\cdots\oplus\Om^{a-1}S(a-1)$, we see that $\Pi$ is the $a$-th covering of $\G$. We conclude by \ref{qv} that $\G$ is twisted $d$-CY of Gorenstein parameter $a$.\\
	(2)  The subcategories are indeed cluster tilting by \cite[2.5]{Iy07a} for $\sg^\Z\!R$, and by \cite[4.6]{Ha3} for $\qper^\Z\!\G$. By \cite[4.12]{MM} (see also \cite[4.9]{Ha3}) we have a triangle equivalence $\qper^\Z\!\G\simeq\Db(\mod A)$ taking $\G\oplus\G(1)\oplus\cdots\oplus\G(a-1)$ to $A$. Composing with $\Db(\mod A)\simeq\sg^\Z\!R$ given in \ref{VER} we obtain $\qper^\Z\!\G\simeq\sg^\Z\!R$ with compatible autoequivalences $(1)\leftrightarrow(1)[-1]$, which takes $\G$ to $S$. We deduce that the equivalence restricts to the $(d-1)$-cluster tilting subcategories.
\end{proof}

\subsection{Quivers and relations}
Let us further describe the algebra $\G$ and $A$ by quivers with relations.
We first observe that $\G$ is generated in degree $1$. Recall that the last few terms of the almost split sequence are given, for example, by
\[ \xymatrix@R=2mm@!C=10mm{
	%	&& M_{i-2}\otimes\Wedge^2V\ar[dr]\ar[rr]&& M_{i-1}\otimes V\ar[dr]&\\
	%	& && N_{i,d-2}\ar[ur]&& M_i &\text{ for }1< i\leq n-1, \\
	\Wedge^3V\otimes M_{n-2}(-1)\ar[rr]\ar[dr]&&\Wedge^2V\otimes M_{n-1}(-1)\ar[rr]\ar[dr]&&V \otimes R\ar[dr]&\\
	&N_{1,d-3}(-1)\ar[ur]&&\Om M_1\ar[ur]&& M_1
} \]
where $V$ is the vector space with basis $\{x_1,\ldots,x_d\}$. The similar sequences exist for the syzygies (\ref{ind}(\ref{syz})).
\begin{Lem}\label{deg1}
There exist following exact sequences in $\mod R$. When $n>2$,
	\begin{enumerate}
		\item\label{for i>2} $\xymatrix{ \sHom_R(S,\Wedge^2V\otimes\Om^pM_{i-2})\ar[r]&\sHom_R(S,V\otimes\Om^pM_{i-1})\ar[r]&\sHom_R(S,\Om^pM_i) }$ for $i>2$. Moreover, the last map is surjective if $p>0$ and has a simple cokernel if $p=0$.
		\item\label{for i=1} $\xymatrix{ \sHom_R(S,\Wedge^3V\otimes\Om^pM_{n-2})\ar[r]&\sHom_R(S,\Wedge^2V\otimes\Om^pM_{n-1})\ar[r]&\sHom_R(S,\Om^{p+1}M_1)\ar[r]&0}$,
		\item\label{for i=2} $\xymatrix{ \sHom_R(S,\Wedge^3V\otimes\Om^{p}M_{n-1})\ar[r]&\sHom_R(S,V\otimes\Om^{p+1}M_{1})\ar[r]&\sHom_R(S,\Om^{p+1}M_2)\ar[r]&0 }$.
		\suspend{enumerate}
		When $n=2$,
		\resume{enumerate}
		\item\label{for n=2} $\xymatrix@C=6mm{ \sHom_R(M,\Wedge^4V\otimes\Om^{p}M)\ar[r]&\sHom_R(M,\Wedge^2V\otimes\Om^{p+1}M)\ar[r]&\sHom_R(M,\Om^{p+2}M)\ar[r]&0 }$, where $M=M_1$.
	\end{enumerate}
Consequently, $\G$ is generated in degree $1$ over its degree $0$ part.
\end{Lem}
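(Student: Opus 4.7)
The plan is to apply $\sHom_R(S,-)$ to short Koszul-type segments of the $p$-th syzygy almost split sequence in \ref{ind}(\ref{syz}), which is the $p$-fold shift of the decomposition of the Koszul resolution of $S$ over $R$, with multiplicity spaces $\Wedge^k V$ in Koszul degree $k$. The concentration results \ref{conc}(\ref{pS}), (\ref{pN}) should prevent the resulting long exact sequences from spilling into unwanted degrees, and the $(d-1)$-cluster tilting property of $S\in\sg^\Z R$ will annihilate the intermediate stable $\mathrm{Ext}$-terms.

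For each of (i)--(iv) I would first isolate the relevant three- or four-term piece. For (i) with $i>2$, the appropriate piece
\[ \Wedge^2 V\otimes\Om^p M_{i-2}\longrightarrow V\otimes\Om^p M_{i-1}\longrightarrow \Om^p M_i\longrightarrow 0 \]
lives inside a single $n$-block. For (ii) and (iii), the Koszul segment crosses the middle term $V\otimes R(-p+1)^\oplus$; passing to the stable category (where this free term is zero) bridges two consecutive $n$-blocks and produces the sequences ending in $\Om^{p+1}M_1$ or $\Om^{p+1}M_2$. For (iv) with $n=2$, the single non-free summand $M=M_1$ forces the Koszul complex to jump directly from $\Wedge^4V\otimes\Om^pM$ to $\Wedge^2V\otimes\Om^{p+1}M$. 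Splitting each segment into short exact sequences of MCM modules and applying $\Hom_{\sCM^\Z R}(S,-)$ yields long exact sequences from which the exactness in (i)--(iv) drops out. The surjectivity on the right (for $p>0$ in (i), unconditionally in (ii)--(iv)) follows from \ref{conc}(\ref{pN}): the obstructing $\sHom_R(S,\Om^{p-1}N_{i,\cdot})$ is concentrated in degree $p-1$, whereas the graded strand in play sits in degree $p$, so there is no contribution. For $p=0$ in (i) the cokernel is spanned by $\mathrm{id}_{M_i}\in\sHom_R(S,M_i)_0$, giving the simple top of the projective $A$-module at vertex $i$.

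For the final assertion, note that by \ref{conc}(\ref{pS}) one has $\G_p=\sHom_R(S,\Om^p S)$ in full. The surjections in (i)--(iv) then express every component $\sHom_R(S,\Om^{p+1}M_i)$ as the image of $V\otimes \sHom_R(S,\Om^p M_{j})$ (or, near the block boundary, of $\Wedge^2 V\otimes \sHom_R(S,\Om^p M_j)$, which already factors through $V\otimes V\otimes\sHom_R(S,\Om^p M_j)$) for a suitable $j$. Since the connecting map is induced by the Koszul differential, i.e.\ multiplication by $x_1,\dots,x_d\in\G_1$, an induction on $p$ gives $\G_{\geq 1}=\G_1\cdot\G_{\geq 0}$.

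The hard part, I expect, is the bookkeeping around the edge cases (ii)--(iv): confirming that crossing a shifted copy of $R$ in the Koszul resolution really produces the claimed extra syzygy on the right-hand term, and keeping the multiplicities $\Wedge^k V$ straight throughout. Identifying the Koszul differential with multiplication in $\G$, so that the surjectivities genuinely translate into generation by $\G_1$, is the conceptual point that must be verified once, after which the four sub-cases look mechanical.
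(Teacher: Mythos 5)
Your proposal is correct and follows the paper's own route: apply $\sHom_R(S,-)$ to the short exact sequence pieces of the $p$-th syzygies of the Koszul-derived almost split sequences from \ref{ind}, use the degree concentration of \ref{conc} to force the connecting morphisms (landing in $\sHom_R(S,\Om^{p\pm1}N_{i,\cdot})$, concentrated in the wrong degree) to vanish, thereby getting exactness and the right-hand surjectivity, and deduce the generation statement by induction on $p$ from those surjections; the $p=0$ cokernel in (i) being the simple spanned by $\mathrm{id}_{M_i}$ is likewise the intended reading. One small correction to your final paragraph: the linear maps $x_s\colon M_{i-1}\to M_i$ are degree-zero elements of $\G$ (arrows of $\G_0=\sEnd_R(S)$), not elements of $\G_1$; the degree-one elements are the $\Wedge^2V$-indexed maps $M_{n-1}\to\Om M_1$ occurring in (ii)--(iv), and these do not factor through $V\otimes V\otimes-$ in the stable category, since the intermediate Koszul term is free and hence vanishes there. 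This slip does not affect the substance: the surjections in (ii)--(iv) are precisely what exhibit each strand of $\G_{p+1}$ as $\G_1\cdot\G_p$, with (i) supplying the remaining $\G_0$-action.
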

%
%\begin{Lem}\label{deg1}
%There exist following exact sequences in $\mod R$.
%\[ 
%\xymatrix@R=2mm{
%(1)& \sHom_R(S,\Wedge^2V\otimes\Om^pM_{i-2})\ar[r]&\sHom_R(S,V\otimes\Om^pM_{i-1})\ar[r]&\sHom_R(S,\Om^pM_i)\ar[r]&0 \\
%&\sHom_R(S,\Wedge^2V\otimes M_{i-2})\ar[r]&\sHom_R(S,V\otimes M_{i-1})\ar[r]&\sHom_R(S,M_i)\ar[r]&T_i\ar[r]&0\\
%(2)& \sHom_R(S,\Wedge^3V\otimes\Om^pM_{n-2})\ar[r]&\sHom_R(S,\Wedge^2V\otimes\Om^pM_{n-1})\ar[r]&\sHom_R(S,\Om^{p+1}M_1)\ar[r]&0\\
%(3)& \sHom_R(S,\Wedge^3V\otimes\Om^{p}M_{n-1})\ar[r]&\sHom_R(S,V\otimes\Om^{p+1}M_{1})\ar[r]&\sHom_R(S,\Om^{p+1}M_2)\ar[r]&0\\
%(4)& \sHom_R(M,\Wedge^4V\otimes\Om^{p}M)\ar[r]&\sHom_R(M,\Wedge^2V\otimes\Om^{p+1}M)\ar[r]&\sHom_R(M,\Om^{p+2}M)\ar[r]&0}
%\]
%\end{Lem}
\begin{proof}
	The last assertion follows inductively from surjectivity of right maps. We only prove (\ref{for i=1}) using the almost split sequence at $M_1$, the others are shown similarly.
	
	We first perform a computation in $\md^\Z\!R$. Applying $\sHom_R(S,-)$ to the almost split sequence ending at $M_1$ yields an exact sequence
	\[ \xymatrix@R=2mm{
		&&\sHom_R(S,\Om^{p+2}M_1(1))\ar[r]&\\
		\sHom_R(S,\Om^pN_{1,d-3})\ar[r]&\sHom_R(S,\Wedge^2V\otimes\Om^pM_{n-1})\ar[r]&\sHom_R(S,\Om^{p+1}M_1(1))\ar[r]&\\
		\sHom_R(S,\Om^{p-1}N_{1,d-3}) } \]
	in $\md^\Z\!R$. Since its first term is concentrated in degree $p+1$, next three terms in degree $p$, and the last term in degree $p-1$ by \ref{conc}, the connecting morphisms are $0$, so the middle terms give a short exact sequence. Also we see similarly that the map $\sHom_R(S,\Wedge^3V\otimes\Om^pM_{n-2})\to\sHom_R(S,\Om^pN_{1,d-3})$ is surjective, which gives the desired exact sequence.
\end{proof}

The sequences above lead to the following description of $\G$.
\begin{Thm}\label{Gamma}
	The algebra $\G=\bigoplus_{i\geq0}\sHom_R(S,\Om^iS)$ is presented by the quiver $Q$ with
	\begin{enumerate}
		\renewcommand{\labelenumi}{(\alph{enumi})}
		\item vertices $\{M_i\mid 1\leq i\leq n-1\}\simeq\{1,2,\ldots,n-1\}$,
		\item two types of arrows:
		\begin{itemize}
			\item $d$ arrows $x_s=x_s^{i}\colon M_i\to M_{i+1}$, $1\leq s\leq d$ for $1\leq i<n-1$,
			\item $\Binom{d}{2}$ arrows $x_{st}\colon M_{n-1}\to M_1$, $1\leq s<t\leq d$,
		\end{itemize}
		\item relations:
		\begin{itemize}
			\item $\Binom{d}{2}$ relations $x^{i+1}_sx^{i}_t=x^{i+1}_tx^{i}_s$, $1\leq s<t\leq d$ for each $1\leq i<n-1$,
			\item $\Binom{d}{3}$ relations $x^{1}_sx_{tu}-x^{1}_tx_{su}+x^{1}_ux_{st}=0$, $1\leq s<t<u\leq d$, and similarly $x_{st}x^{n-2}_u-x_{su}x^{n-2}_t+x_{tu}x^{n-2}_s=0$, $1\leq s<t<u\leq d$,
			\item if $n=2$, $\Binom{d}{4}$ relations $x_{uv}x_{st}-x_{tv}x_{su}+x_{tu}x_{sv}+x_{sv}x_{tu}-x_{su}x_{tv}+x_{st}x_{uv}=0$ for $1\leq s<t<u<v\leq d$.
		\end{itemize}
	\end{enumerate}
\end{Thm}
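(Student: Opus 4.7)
The plan is to construct a graded algebra map $\phi: B := kQ/I \to \Gamma$ sending each arrow to the corresponding multiplication map, and to prove it is an isomorphism by computing both sides degree by degree using Lemma~\ref{deg1}.

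First, I would identify $\Gamma_0$ directly. Since $\Hom_R(M_i, R(c))_0 = 0$ for $c \leq 0$ (the module $\Hom_R(M_i, R)$ starts in our-grading $\geq 1$) and $\Hom_R(R(c), M_j)_0 = (M_j)_{-c} = 0$ for $c \geq 1$, no nonzero degree-zero map $M_i \to M_j$ factors through a projective $R$-shift, so $\sHom_R^\Z(M_i, M_j)_0 = \Hom_R^\Z(M_i, M_j)_0$. A direct computation of the latter as multiplications by elements of $S$ yields
\[
\sHom_R^\Z(M_i, M_j)_0 = \begin{cases} S_{j-i} & \text{if } j \geq i, \\ 0 & \text{if } j < i. \end{cases}
\]
Thus $\Gamma_0$ is precisely the path algebra of the linear quiver $M_1 \to M_2 \to \cdots \to M_{n-1}$ with $d$-fold arrows $x_s^i$ between consecutive vertices, modulo the commutativity relations $x_s^{i+1}x_t^i = x_t^{i+1}x_s^i$. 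This matches the degree-zero part of the presentation $Q/I$ in the theorem.

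Second, for the degree-one generators $x_{st}$: the Koszul complex of $S$, restricted to the appropriate $G$-isotypic component, yields the partial resolution $\wedge^2 V \otimes M_{n-1}(-1) \to V \otimes R \to M_1 \to 0$, which identifies $\sHom_R^\Z(M_{n-1}, \Omega M_1)_1 \cong \wedge^2 V$ and produces the $\binom{d}{2}$ generators $x_{st}$. By Lemma~\ref{deg1}, $\Gamma$ is generated in degree one over $\Gamma_0$, so this defines a surjection $\phi: kQ \twoheadrightarrow \Gamma$. Next I would verify the remaining relations: the alternating relations $x_s^1 x_{tu} - x_t^1 x_{su} + x_u^1 x_{st} = 0$ come from applying $\sHom_R(S,-)$ to Lemma~\ref{deg1}(\ref{for i=2}) at $p = 0$, whose leftmost map is the Koszul differential sending $e_s \wedge e_t \wedge e_u$ to $e_s \otimes (e_t \wedge e_u) - e_t \otimes (e_s \wedge e_u) + e_u \otimes (e_s \wedge e_t)$; the relations $x_{st}x_u^{n-2} - x_{su}x_t^{n-2} + x_{tu}x_s^{n-2} = 0$ arise analogously from (\ref{for i=1}), and the $n = 2$ quartic relation from (\ref{for n=2}) via the differential $\wedge^4 V \to \wedge^2 V \otimes \wedge^2 V$. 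Hence $\phi$ factors through $\bar\phi: B \twoheadrightarrow \Gamma$.

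For injectivity I would induct on degree using Lemma~\ref{deg1}: each column $\sHom_R^\Z(S, \Omega^{p+1} M_i)_{p+1}$ equals the cokernel of a Koszul-type map $\wedge^2 V \otimes \sHom_R^\Z(S, \Omega^p M_{i-2}) \to V \otimes \sHom_R^\Z(S, \Omega^p M_{i-1})$ (with analogues at $i = 1, 2$ and for $n = 2$). On the $B$-side, the listed quadratic relations precisely encode the kernel of multiplication by the degree-one arrows, so the same cokernels compute $B_{p+1} \cdot e_{M_i}$; thus $B_p \cong \Gamma_p$ inductively implies $B_{p+1} \cong \Gamma_{p+1}$. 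The main obstacle is this final step: verifying across all splice points $i \in \{1, 2, n-1\}$, all grading twists $(c)$ from Lemma~\ref{ind}(\ref{syz}), and the special case $n = 2$, that the quadratic Koszul-type relations alone determine the kernels uniformly --- essentially, that $B$ inherits the Koszul-type structure of the Koszul complex of $S$.
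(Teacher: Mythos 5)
Your proposal follows essentially the same route as the paper: Lemma \ref{deg1} supplies generation in degree one, the Koszul differentials in its exact sequences produce the arrows $x_{st}$ and the listed relations, and degree zero is handled by the presentation of $\G_0$ (which the paper simply quotes as the known presentation of $\sEnd_R(S)$, while you compute it directly as $\bigoplus_{j\geq i}S_{j-i}$). The one point you flag as the main obstacle is not a genuine one: no Koszul-type exactness is needed on the $B=kQ/I$ side. For any quotient of a path algebra by the ideal generated by the relation spaces $W_i$, the sequence $W_i\otimes_{kQ_0}B\to A_i\otimes_{kQ_0}B\to Be_i$ (with $A_i$ the span of arrows into $i$) automatically has the image of the first map equal to the kernel of the second --- and in fact only the vanishing of the composite is used; comparing these with the corresponding sequences for $\G$, which are exact in every syzygy degree $p$, including the splice cases $i=1,2$ and the case $n=2$, by Lemma \ref{deg1}, a five-lemma chase with lexicographic induction on (degree, vertex) gives $B\xrightarrow{\sim}\G$. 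The only small elision is the step from ``$\G$ is generated in degree one'' to the surjection $kQ\twoheadrightarrow\G$: you need that every degree-one arrow starts at $M_{n-1}$ and ends at $M_1$, which follows, as in the paper, from the surjectivity statements in Lemma \ref{deg1} showing that any map $S\to\Om M_i$ with $i>1$ factors through $V\otimes \Om M_{i-1}$ (and dually for sources).
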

\begin{proof}
	Since $\G$ is generated in degree $1$ by \ref{deg1}, the quiver of $\G$ is obtained from the quiver of $\G_0$ by adding some degree $1$ arrows. We know that the degree $0$ part $\sEnd_R(S)$ is presented by the quiver with vertives $\{M_1,\ldots,M_{n-1}\}$, $d$ arrows $\{x_1,\ldots,x_d\}$ from $M_{i-1}$ to $M_{i}$ for each $1<i\leq n-1$.
	\[ \xymatrix{ M_1\ar@2[r]&M_2\ar@2[r]&\cdots\ar@2[r]& M_{n-1} } \]
	
	We next consider degree $1$ arrows. By \ref{deg1}(\ref{for i>2})(\ref{for i=2}), any morphism $S\to \Om M_i$ factors through the map $\Om M_{i-1}\otimes V\to \Om M_i$ whenever $i>1$, so any arrow of degree $1$ ends at $M_1$. Similarly, using the first several terms of almost split sequences we see that any degree $1$ arrows starts at $M_{n-1}$. Now \ref{deg1}(\ref{for i>2})(\ref{for n=2}) yields an isomorphism $\sHom_R(M_{n-1},\Wedge^2V\otimes M_{n-1})\xsimeq\sHom_R(M_{n-1},\Om M_1)$, which shows the number of arrows from $M_{n-1}$ to $M_1$ is $\dim\Wedge^2V=\Binom{d}{2}$. We label them by $\{x_{st}\mid 1\leq s<t\leq d\}$.
	\[ \xymatrix{ \cdots\ar@2[r]^-d&M_{n-1}\ar@3[r]^-{\binom{d}{2}}&M_1\ar@2[r]^-d&M_2\ar@2[r]^-d&\cdots\ar@2[r]^-d& M_{n-1}\ar@3[r]^-{\binom{d}{2}}&M_1\ar@2[r]^-d&\cdots } \]
	
	We go on to determine the relations. %The quiver of $\G$ above and the exact sequences in \ref{deg1} show that the relations are quadratic.
	First assume $n>2$.	The exact sequence in \ref{deg1}(\ref{for i>2}) shows that all relations ending at $M_i$ start at $M_{i-2}$, and that they are reflected as the following composite being zero, see \cite[3.3]{BIRSm}.
	\[ \xymatrix{\sHom_R(M_{i-2},\Wedge^2V\otimes M_{i-2})\ar[r]&\sHom_R(M_{i-2},V\otimes M_{i-1})\ar[r]&\sHom_R(M_{i-2},M_i) } \]
	An element $x_s\wedge x_t\in\Wedge^2V\simeq\sHom_R(M_{i-2},M_{i-2}\otimes\Wedge^2V)$ is mapped to $x_t\otimes x_s-x_s\otimes x_t$ in $\sHom_R(M_{i-2},V\otimes M_{i-1})\simeq V\otimes V$, which shows that we have commutativity relation from $M_{i-2}$ to $M_i$.
	
	Similarly consider the sequence in \ref{deg1}(\ref{for i=1}) which yields the relations from $M_{n-1}$ to $M_1$. In the sequence
	\[ \xymatrix@R=2mm{
		\sHom_R(M_{n-2},\Wedge^3V\otimes M_{n-2})\ar@{=}[d]\ar[r]&\sHom_R(M_{n-2},\Wedge^2V\otimes M_{n-1})\ar@{=}[d]\ar[r]&\sHom_R(M_{n-2},\Om M_1) \\
		\Wedge^3V& \Wedge^2V\otimes V&&, } \]
	the first map is given by $x_s\wedge x_t\wedge x_u\mapsto (x_t\wedge x_u)\otimes x_s-(x_s\wedge x_u)\otimes x_t+(x_s\wedge x_t)\otimes x_u$ for $1\leq s<t<u\leq d$. In view of our identification of the arrows $M_{n-1}\to M_1$ with the basis of $\sHom_R(M_{n-1},\Om M_1)\simeq\Wedge^2 V$, we see that the relations from $M_{n-2}$ to $M_1$ are given by $x_{tu}x_s-s_{su}x_t+s_{st}x_u=0$. The same holds for the relations from $M_{n-1}$ to $M_2$.
	
	Next assume $n=2$. In this case the exact sequence in \ref{deg1}(\ref{for n=2}) again shows the relations are quadratic, and gives an exact sequence below.
	\[ \xymatrix@R=2mm{
		\sHom_R(M,\Wedge^4V\otimes M)\ar[r]\ar@{=}[d]&\sHom_R(M,\Wedge^2V\otimes\Om M)\ar[r]\ar@{=}[d]&\sHom_R(M,\Om^{2}M) \\
		\Wedge^4V&\Wedge^2V\otimes\Wedge^2V } \]
	Let us compute the first map in terms of the second row. Recall that the above exact sequence is obtained from the almost split sequence
	\[ \xymatrix@R=3mm@!C=7mm{
		\Wedge^4V\otimes M\ar[rr]\ar[dr]_-f&&\Wedge^3V\otimes R\ar[rr]\ar[dr]&&\Wedge^2V\otimes M\ar[dr]\ar[rr]&&V\otimes R\ar[dr]&\\
		&\Om N\ar[ur]&&N\ar[ur]_-g&&\Om M\ar[ur]&& M }\]
	as the composite
	\[ \xymatrix{ \sHom_R(M,\Wedge^4V\otimes M)\ar[r]^-{f\cdot}&\sHom_R(M,\Om N)\ar[r]^-{\Om g\cdot}&\sHom_R(M,\Wedge^2V\otimes\Om M) }, \]
	and the map $\Om g$ is computed by the following commutative diagram
	\[ \xymatrix{
		0\ar[r]&\Om N\ar[r]\ar[d]_-{\Om g}&\Wedge^3V\otimes R\ar[r]\ar[d]& N\ar[r]\ar[d]^-g& 0\\
		0\ar[r]&\Wedge^2V\otimes\Om M\ar[r]&\Wedge^2V\otimes V\otimes R\ar[r]&\Wedge^2V\otimes M\ar[r]& 0 } \]
	whose middle map is the one induced by $\Wedge^3V\to \Wedge^2V\otimes V$, $x_s\wedge x_t\wedge x_u\mapsto(x_t\wedge x_u)\otimes x_s-(x_s\wedge x_u)\otimes x_t+(x_s\wedge x_t)\otimes x_u$. Therefore the composite $\Wedge^4V\otimes M\xrightarrow{f}\Om N\xrightarrow{\Om g}\Wedge^2V\otimes\Om M\hookrightarrow\Wedge^2V\otimes V\otimes R$ which equals $\Wedge^4V\otimes M\to\Wedge^3V\otimes R \to\Wedge^2V\otimes V\otimes R$ is described by
	\begin{equation*}
		\begin{aligned}
			(x_s\wedge x_t\wedge x_u\wedge x_v)\otimes m
			\mapsto\,\,&(x_t\wedge x_u\wedge x_v)\otimes x_sm-(x_s\wedge x_u\wedge x_v)\otimes x_tm\\
			+&(x_s\wedge x_t\wedge x_v)\otimes x_um-(x_s\wedge x_t\wedge x_u)\otimes x_vm\\
			\mapsto\,\, & (x_u\wedge x_v)\otimes x_t\otimes x_sm -(x_t\wedge x_v)\otimes x_u\otimes x_sm+(x_t\wedge x_u)\otimes x_v\otimes x_sm\\
			-&(x_u\wedge x_v)\otimes x_s\otimes x_tm +(x_s\wedge x_v)\otimes x_u\otimes x_tm -(x_s\wedge x_u)\otimes x_v\otimes x_tm\\
			+&(x_t\wedge x_v)\otimes x_s\otimes x_um-(x_s\wedge x_v)\otimes x_t\otimes x_um +(x_s\wedge x_t)\otimes x_v\otimes x_um\\
			-&(x_t\wedge x_u)\otimes x_s\otimes x_vm+(x_s\wedge x_u)\otimes x_t\otimes x_vm-(x_s\wedge x_t)\otimes x_u\otimes x_vm\\
			=\,\, & (x_u\wedge x_v)\otimes(x_t\otimes x_sm-x_s\otimes x_tm)-(x_t\wedge x_v)\otimes (x_u\otimes x_sm-x_s\otimes x_um)\\
			+&(x_t\wedge x_u)\otimes (x_v\otimes x_sm-x_s\otimes x_vm)+(x_s\wedge x_v)\otimes(x_u\otimes x_tm-x_t\otimes x_um)\\
			-&(x_s\wedge x_u)\otimes (x_v\otimes x_tm-x_t\otimes x_vm)+(x_s\wedge x_t)\otimes (x_v\otimes x_um-x_u\otimes x_vm).
		\end{aligned}
	\end{equation*}	
	Recalling that the isomorphism $\Wedge^2V\simeq\sHom_R(M,\Om M)$ is given by $x_s\wedge x_t\mapsto (m\mapsto x_t\otimes x_sm-x_s\otimes x_tm)$ under the inclusion $\Om M\subset V\otimes R$, we see that the map in question $\Wedge^4V\to\Wedge^2V\otimes\Wedge^2V$ can be written as
	\begin{equation*}
		\begin{aligned}
			x_s\wedge x_t\wedge x_u\wedge x_v
			\mapsto\,\, &(x_u\wedge x_v)\otimes (x_s\wedge x_t)-(x_t\wedge x_v)\otimes (x_s\wedge x_u)+(x_t\wedge x_u)\otimes (x_s\wedge x_v)\\
			+&(x_s\wedge x_v)\otimes (x_t\wedge x_u)-(x_s\wedge x_u)\otimes (x_t\wedge x_v)+(x_s\wedge x_t)\otimes (x_u\wedge x_v).
		\end{aligned}
	\end{equation*}
	We therefore conclude that the relations are as asserted.
\end{proof}

\begin{Cor}
	The endomorphism algebra $A=\sEnd_R^\Z(T)$ given in \ref{VER} is presented by the quiver consisting of
	\begin{enumerate}
		\renewcommand{\labelenumi}{(\alph{enumi})}
		\item vertices: $\{\Om^pM_i\mid 1\leq i\leq n-1, 0\leq p\leq a-1\}\simeq\{1,\ldots,n-1\}\times\{0,1,\ldots,a-1\}$,
		\item arrows: 
		\begin{itemize}
			\item $d$-arrows $\Om^pM_i\to\Om^pM_{i+1}$ for each $1\leq i<n-1$ and $0\leq p\leq a-1$,
			\item $\Binom{d}{2}$ arrows $\Om^pM_{n-1}\to\Om^{p+1}M_1$ for each $0\leq p<a-1$,
		\end{itemize}
	\end{enumerate}
	with the induced relations.
\end{Cor}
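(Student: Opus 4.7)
The plan is to read off the presentation of $A$ from that of $\Gamma$ in Theorem \ref{Gamma} by regarding $A$ as the ``unrolling'' of $\Gamma$ over the $a$ levels indexed by the summands $\Om^p S(p)$ of $T$.

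First I identify the vertices. Writing $S=R\oplus M_1\oplus\cdots\oplus M_{n-1}$ and applying $\Om^p$, the summand $\Om^p S$ of $T$ decomposes in $\sg^\Z R$ as $\bigoplus_{i=1}^{n-1}\Om^p M_i$, and each $\Om^p M_i$ is indecomposable since $\Om$ is an autoequivalence of the stable category. Thus the indecomposable summands of $T$ are exactly $\Om^p M_i(p)$ for $1\leq i\leq n-1$ and $0\leq p\leq a-1$, matching the vertex set of the claimed quiver.

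Next I compute the hom spaces. Using the stable-category identity $\sHom_R(\Om^pX,\Om^qY)\simeq\sHom_R(X,\Om^{q-p}Y)$ together with the graded shifts, the space $\sHom_R^\Z(\Om^pM_i(p),\Om^qM_j(q))$ equals the degree-$(q{-}p)$ component of $\sHom_R(M_i,\Om^{q-p}M_j)$. For $q\geq p$, Lemma \ref{conc} shows this hom is concentrated precisely in degree $q-p$, so the whole of $\Gamma_{q-p}(i,j):=\sHom_R(M_i,\Om^{q-p}M_j)$ contributes. For $q<p$, the relevant group becomes $\Ext_R^{p-q}(M_i,M_j)$; since $d=an\geq 2a$ and $0\leq p,q\leq a-1$, we have $1\leq p-q\leq a-1\leq d-2$, so this Ext vanishes by the $(d{-}1)$-cluster tilting property of $S$ in $\sg R$. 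Consequently $A$ identifies, as a graded algebra with multiplication induced from $\Gamma$, with the block-triangular piece $\bigoplus_{0\leq p\leq q\leq a-1}\Gamma_{q-p}$.

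Finally I transfer the presentation. Since $\Gamma$ is generated in degree $1$ by Lemma \ref{deg1}, so is $A$ over its diagonal part: the degree-$0$ generators $M_i\to M_{i+1}$ of $\Gamma$ yield the $d$ intra-level arrows $\Om^pM_i(p)\to\Om^pM_{i+1}(p)$ for each $p$, while the $\binom{d}{2}$ degree-$1$ generators $M_{n-1}\to M_1$ of $\Gamma$ yield the $\binom{d}{2}$ arrows $\Om^pM_{n-1}(p)\to\Om^{p+1}M_1(p+1)$ between consecutive levels, which are exactly the arrows listed. The relations of $\Gamma$ (commutativity relations and the Koszul-type relations wrapping around the cycle, plus the quartic relation when $n=2$) split level-by-level or across two, respectively three, consecutive levels, giving exactly the induced relations on $A$. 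The main subtlety is the vanishing of backward arrows, handled cleanly by the cluster-tilting argument above, so no additional work is required.
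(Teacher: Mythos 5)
Your proposal is correct and follows essentially the same route as the paper, whose entire proof is the observation that $A=\End_\G^\Z(\G\oplus\G(1)\oplus\cdots\oplus\G(a-1))$; your computation via \ref{conc} (degree concentration of forward homs) and the $(d-1)$-cluster tilting property of $S$ (vanishing of backward homs) is exactly the justification of that identity, after which the presentation is read off from \ref{Gamma} just as you do. The extra detail you supply is a fleshed-out version of what the paper leaves implicit, not a different argument.
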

\begin{proof}
	This is because $A=\End_\G^\Z(\G\oplus\cdots\oplus\G(a-1))$.
\end{proof}

Let us give some examples of the algebra $\G$ and $A$.
\begin{Ex}
	The first example is $a=1$, that is, $R=k[x_1,\ldots,x_d]^{(d)}$. The algebra $A$ in this case is presented by the linear quiver below with $d$-fold arrows and commutativity relations.
	\[ \xymatrix{ M_1\ar@2[r]^-d&M_2\ar@2[r]^-d&\cdots\ar@2[r]^-d&M_{n-1} } \]
	This appears also in \cite[5.5]{AIR}, which turns out by \ref{VER}(2) to be $(d-2)$-representation infinite. In this case the algebra $\G$ is nothing but the $(d-1)$-preprojective algebra of $A$, which by \ref{Gamma} is presented by the cyclic quiver
	\[ \xymatrix{ \cdots\ar@2[r]&M_{n-1}\ar@3[r]&M_1\ar@2[r]&M_2\ar@2[r]&\cdots\ar@2[r]&M_{n-1}\ar@3[r]&M_1\ar@2[r]&\cdots } \]
	with suitable relations. We list some specific examples for small $d$, where the dotted arrows for $A$ show the relations, and the figures show the number of arrows or relations.
	\[
	\begin{tabular}{c|ccc}
		$d$ & $A$ & $\G=\Pi_{d-1}(A)$ &relations \\
		\hline
		$2$ & $\circ$ & $\xymatrix@R=3mm{{}\\ \circ\ar@(ul,ur)[]\ar@{}[u]}$ \\
		\hline
		$3$ & $\xymatrix{M_1\ar[r]^3&M_2}$ & $\xymatrix{M_1\ar@/^3pt/[r]^-3&M_2\ar@/^3pt/[l]^-3}$ & $\begin{aligned} &x_{23}x_1-x_{13}x_2+x_{12}x_3=0 \\&x_1x_{23}-x_2x_{13}+x_3x_{12}=0\end{aligned}$ \\
		\hline
		$4$ & $\xymatrix@R=3mm@C=3mm{&M_2\ar[dr]^-{4}\\ M_1\ar[ur]^-{4}&&M_3\ar@{-->}[ll]^-{6} }$  &$\xymatrix@R=3mm@C=3mm{&M_2\ar[dr]^-{4}\\ M_1\ar[ur]^-{4}&&M_3\ar[ll]^-{6} }$& \\%$\begin{aligned} &x_sx_t=x_tx_s\\& x_{23}x_1-x_{13}x_2+x_{12}x_3=0\end{aligned}$
		\hline
		$5$ & $\xymatrix{M_2\ar[r]^-5&M_3\ar[d]^-5\ar@{-->}[dl]^-{10}\\M_1\ar[u]^-5&M_4\ar@{-->}[ul]^-{10}}$&$\xymatrix{M_2\ar[r]^-5&M_3\ar[d]^-5\\M_1\ar[u]^-5&M_4\ar[l]_-{10}}$
	\end{tabular}
	\]
\end{Ex}

\begin{Ex}
	Let us look at the case $n=2$, thus $d=2a$ and $R=k[x_1,\ldots,x_d]^{(2)}$. We obtain a family of $(d/2-1)$-representation infinite algebras $A$ and twisted $d/2$-CY algebra $\G$ below of Gorenstein parameter $d/2$ with $\G_0=k$, generated by $\Binom{d}{2}$ elements and $\Binom{d}{4}$ relations.
	\[ \G=\frac{k\left\langle x_{st}\mid 1\leq s<t\leq d\right\rangle}{ (x_{uv}x_{st}-x_{tv}x_{su}+x_{tu}x_{sv}+x_{sv}x_{tu}-x_{su}x_{tv}+x_{st}x_{uv}=0\mid 1\leq s<t<u<v\leq d)} \]
	Again we give some details for small $d$.
	\[
	\begin{tabular}{c|cc}
		$d$ & $\G$ & $A$\\
		\hline
		$2$ & $k[x]$ & $\circ$ \\
		\hline
		$4$ & $\Frac{k\left\langle x_1,x_2,x_3,x_4,x_5,x_6\right\rangle}{(x_6x_1-x_5x_2+x_4x_3+x_4x_3-x_2x_5+x_1x_6)}$ & $\xymatrix{\circ\ar[r]^6&\circ}$ \\
		\hline
		$6$ && $\xymatrix@R=3mm@C=3mm{&\circ\ar[dr]^-{15}\\ \circ\ar[ur]^-{15}&&\circ\ar@{-->}[ll]_-{15} }$ \\
		\hline
		$8$&& $\xymatrix@R=4mm@C=4mm{&\circ\ar[r]^-{28}&\circ\ar[dr]^-{28}\ar@{-->}[dll]^-{70}&\\ \circ\ar[ur]^-{28}&&& \circ\ar@{-->}[ull]^-{70}}$
	\end{tabular}
	\]
\end{Ex}

\begin{Ex}
	Let us include the case $n=3$ on the left, and $n=4$ on the right.
	\[
	\begin{tabular}{c|cc}
		$d$ & $\G$ & $A$\\
		\hline
		$3$ & $\xymatrix{M_1\ar@/^3pt/[r]^-3&M_2\ar@/^3pt/[l]^-3}$ & $\xymatrix{M_1\ar[r]^-3&M_2}$ \\
		\hline
		$6$ & $\xymatrix{M_1\ar@/^3pt/[r]^-6&M_2\ar@/^3pt/[l]^-{15}}$ & $\xymatrix{M_1\ar[r]^-6&M_2\ar[dl]_-{15}\\ \Om M_1\ar@{-->}[u]^-{20}\ar[r]^-6&\Om M_2\ar@{-->}[u]_-{20} }$ \\
		\hline
		$9$ &$\xymatrix{M_1\ar@/^3pt/[r]^-9&M_2\ar@/^3pt/[l]^-{36}}$ & $\xymatrix{M_1\ar[r]^-9&M_2\ar[dl]_-{36}\\ \Om M_1\ar[r]^-9\ar@{-->}[u]^-{84}&\Om M_2\ar[dl]_-{36}\ar@{-->}[u]_-{84}\\ \Om^2M_1\ar[r]^-9\ar@{-->}[u]^-{84}&\Om^2M_2\ar@{-->}[u]_-{84} }$
	\end{tabular}
	\qquad
	\begin{tabular}{c|cc}
		$d$ & $\G$ & $A$\\
		\hline
		$4$ & $\xymatrix{M_1\ar[r]^-4&M_2\ar[r]^-4&M_3\ar@/^10pt/[ll]^-6}$ & $\xymatrix{M_1\ar[r]^-4&M_2\ar[r]^-4&M_3}$ \\
		\hline
		$8$ & $\xymatrix{M_1\ar[r]^-8&M_2\ar[r]^-8&M_3\ar@/^10pt/[ll]^-{28}}$ & $\xymatrix{M_1\ar[r]^-8&M_2\ar[r]^-8&M_3\ar[dll]_-{28}\\ \Om M_1\ar@{-->}[ur]^-{56}\ar[r]^-8&\Om M_2\ar[r]^-8\ar@{-->}[ur]_-{56}&\Om M_3 }$ \\
		\hline
		$12$ &$\xymatrix{M_1\ar[r]^-{12}&M_2\ar[r]^-{12}&M_3\ar@/^10pt/[ll]^-{66}}$& $\xymatrix{M_1\ar[r]^-{12}&M_2\ar[r]^-{12}&M_3\ar[dll]_-{66}\\ \Om M_1\ar@{-->}[ur]^-{220}\ar[r]^-{12}&\Om M_2\ar[r]^-{12}\ar@{-->}[ur]_-{220}&\Om M_3\ar[dll]_-{66}\\ \Om^2 M_1\ar@{-->}[ur]^-{220}\ar[r]^-{12}&\Om^2 M_2\ar[r]^-{12}\ar@{-->}[ur]_-{220}&\Om^2 M_3 }$
	\end{tabular}
	\]
\end{Ex}

\section{Homogeneous coordinate ring of a product of projective lines}
Recall that the {\it Segre product} of graded $k$-algebras $S=\bigoplus_{i\in\Z}S_i$ and $T=\bigoplus_{i\in\Z}T_i$ is the graded algebra
\[ S\seg T:=\bigoplus_{i\in\Z}S_i\otimes_k T_i. \]
If $S$ and $T$ are positively graded commutative algebras generated in degree $1$, then $S\seg T$ is the homogeneous coordinate ring of the product $X\times Y$ with respect to the polarization $\mathcal{O}(1,1)$, where $X=\Proj S$ and $Y=\Proj T$.
Similarly, if $M\in\mod^\Z\!S$ and $N\in\mod^\Z\!T$ are graded modules, then their Segre product is
\[ M\seg N:=\bigoplus_{i\in\Z}M_i\otimes_k N_i, \]
which is a graded module over $S\seg T$. If $\widetilde{M}\in\coh X$ and $\widetilde{N}\in\coh Y$ are the corresponding coherent sheaves, then $\widetilde{M\seg N}=\widetilde{M}\boxtimes\widetilde{N}$ in $\coh (X\times Y)$.

Let $n\geq1$ be an integer and consider the Segre product
\[ R=k[x_1,y_1]\seg k[x_2,y_2]\seg\cdots\seg k[x_n,y_n] \]
of $n$ copies of the polynomial ring in $2$ variables, where we give the standard $\Z$-grading to each of them. This is a homogeneous coordinate ring of the $n$-fold product $X=\PP^1\times\cdots\times\PP^1$ of the projective line $\PP^1$, and $R$ is an $(n+1)$-dimensional Gorenstein isolated singularity of $a$-invariant $-2$.

This graded ring $R$ can also be described as an invariant ring in the following way. Let
\[ S=k[x_{i},y_{i}\mid 1\leq i\leq n], \]
which we view as a $\Z^n$-graded ring by $\deg x_{i}=\deg y_{i}=e_i$, where $e_i=(0,\ldots,1,\ldots,0)$ is the $i$-th unit vector. Then $R$ is a Veronese subring of $S$:
\[ R=S^{(1,1,\ldots,1)}, \]
that is, $R=\bigoplus_{i\in\Z}S_{(i,\ldots,i)}$ as graded rings.

Now we go on to fix further notations. For $v=(v_1,\ldots,v_n)\in\Z^n$ we write $v\geq0$ if $v_i\geq0$ for every $i$. Clearly, this makes $\Z^n$ into an ordered group by setting $v\geq v^\prime$ if $v-v^\prime\geq0$. We also write $0=(0,\ldots,0)$, $1=(1,\ldots,1)$, and $-1=(-1,\ldots,-1)$.
For a vector $v\in\Z^n$, we put $M_v:=\bigoplus_{i\in\Z}S_{v+(i,\ldots,i)}$ which is a graded $R$-module.
%We will simply write $M_i$ for $M_{e_i}=M_{(0,\ldots,1,\ldots,0)}$.

We show that this $R$ admits a tilting object with $(n-2)$-representation infinite endomorphism ring.
\begin{Thm}\label{P^1}
Let $R=k[x_1,y_1]\seg k[x_2,y_2]\seg\cdots\seg k[x_n,y_n]$ with the standard grading.
\begin{enumerate}
\item $T=\bigoplus_{0<v<1}M_v\in\sg^\Z\!R$ is a tilting object
\item $A:=\End_{\sg R}^\Z(T)$ is $(n-2)$-representation infinite with a root of $\nu_{n-2}$.
\end{enumerate}
Therefore, there is a commutative diagram of equivalences
\[ \xymatrix@R=5mm{
	\sg^\Z\!R\ar[r]\ar[d]^-\rsimeq&\sg^{\Z/2\Z}\!R\ar[r]\ar[d]^-\rsimeq&\sg R\ar[d]^-\rsimeq\\
	\Db(\mod A)\ar[r]&\rC_n(A)\ar[r]&\rC_{n}^{(1/2)}(A). } \]
\end{Thm}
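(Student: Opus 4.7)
The plan is to relate $\sg^\Z R$ to the derived category of $X := (\PP^1)^n$ via Orlov's semi-orthogonal decomposition and to exploit the Beilinson--Bondal tilting bundle on $X$. Since $R$ is Gorenstein of dimension $n+1$ with $\omega_R \simeq R(-2)$ (Gorenstein parameter $2$), Orlov's theorem gives a decomposition of the form $\Db(\coh X) = \langle \O_X,\, \O_X(1,\ldots,1),\, \sg^\Z R\rangle$, i.e., $|a\text{-invariant}|=2$ extras beyond $\sg^\Z R$. The Beilinson--Bondal tilting bundle $T_X = \bigoplus_{v \in \{0,1\}^n}\O_X(v)$ has endomorphism algebra $B$ isomorphic to the tensor product of $n$ Kronecker algebras (quiver: the $n$-hypercube with each edge doubled, commutativity relations); in particular $B$ is $n$-representation infinite by \cite[2.15]{HIO}. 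Among the $2^n$ summands of $T_X$, the two ``corners'' $\O_X$ and $\O_X(1,\ldots,1)$ correspond under Orlov's comparison to $M_0 = R$ and $M_1 = R(1)$, which vanish in $\sg^\Z R$; the remaining $2^n-2$ summands are exactly $T$.

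With this setup, the tilting claim (1) follows from the SOD: $T$ classically generates $\sg^\Z R$ and satisfies $\Ext^i_{\sg^\Z R}(T,T) = 0$ for $i \ne 0$ because $T_X$ does so on $X$ and $\langle \O_X, \O_X(1,\ldots,1)\rangle$ is exactly the orthogonal complement. The endomorphism algebra is then identified as $A \simeq B/(e_0 + e_1)$, where $e_0, e_1$ are the idempotents at the corner vertices $v = 0$ and $v = (1,\ldots,1)$. For claim (2), note that $v=0$ is a source and $v = (1,\ldots,1)$ is a sink of the hypercube quiver, so $(1-e_0)Be_0 = 0$ and $e_1B(1-e_1) = 0$, while $e_iBe_i = k$ is semisimple. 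Applying \ref{-1RI*} (to the source $e_0$) and then \ref{-1RI} (to the sink $e_1$), after verifying the required injective/projective dimension bounds from the explicit quiver structure of $B$, yields that $A$ is $(n-2)$-representation infinite.

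For the existence of a root of $\nu_{n-2}$: the Serre functor on $\sg^\Z R$ is $(-2)[n]$, so $\nu_{n-2}^{-1} = (2)[-2]$ on $\sg^\Z R \simeq \Db(\mod A)$, and the degree shift $(1)[-1]$ (equivalently, $\Om(1)$) is an algebraic square root, which transfers along the tilting equivalence to the desired root on $\Db(\mod A)$. The final commutative diagram of equivalences with $\rC_n(A)$ and $\rC_n^{(1/2)}(A)$ then follows from the general framework of \cite{HaI} recalled around diagram (\ref{diagram}). The main obstacle is the precise Orlov-based identification $A \simeq B/(e_0+e_1)$ and the verification of the hypotheses of \ref{-1RI}, in particular the injective dimension bound on $B/(e)$; an alternative, more concrete route is to execute everything at the level of graded $R$-modules via Koszul-type sequences, analogous to the Veronese case of Section 4, at the cost of substantially heavier computation.
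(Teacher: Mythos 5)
Your proposal follows essentially the same route as the paper: identify $\Db(\qmod^\Z\!R)\simeq\Db(\coh X)$, use the (sandwiched) Orlov decomposition of \ref{ssodd} with the Beilinson--Bondal bundle whose endomorphism algebra $B$ is the $n$-fold tensor Kronecker algebra, conclude $T$ is tilting with $A\simeq B/(e_0,e_1)$, reduce via the idempotent theorems of Section \ref{red}, and get the root $(1)[-1]$ of $\nu_{n-2}^{-1}$ and the diagram from \cite{HaI}. The only minor discrepancies are that, with the paper's conventions, \ref{-1RI} is applied to the source idempotent $e_0$ and \ref{-1RI*} to the sink idempotent $e_1$ (you have them swapped), and the paper settles your ``to be verified'' dimension hypotheses by noting that the simple $B$-module at vertex $v$ has injective dimension $n-\sum_i v_i$, and checks $T$ lands in the fundamental domain via \ref{generator} and \ref{M_v in sg}.
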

We do not know in general if the above root of $\nu_{n-2}$ can form a (strict) root pair for some tilting object.

For $n=3$ we have a positive answer, in fact, the above tilting object given above yields a strict root pair for $\nu_1$.
Note also that in this case we have that $A$ is hereditary, which therefore gives a new example of a Gorenstein ring of hereditary representation type \cite{Ha6}. In the statement below we write $M_{abc}$ for $M_{(a,b,c)}$.
\begin{Thm}\label{n=3}
Let $n=3$ so that $R=k[x_1,y_1]\seg k[x_2,y_2]\seg k[x_3,y_3]$, and let $T=\bigoplus_{0<v<1}M_v$ be the tilting object in \ref{P^1}. %We write, for example, $M_{100}$ for $M_{(1,0,0)}$.
\begin{enumerate}
\item The functor $\Om(1)=(1)[-1]$ on $\sg^\Z\!R$ is a square root of $\nu_1^{-1}$.
\item We have $\Om M_{100}(1)=M_{011}$, $\Om M_{010}(1)=M_{101}$, and $\Om M_{001}(1)=M_{110}$. Therefore, $T=T_0\oplus \Om T_0(1)$ for $T_0=M_{100}\oplus M_{010}\oplus M_{001}$.
\item The endomorphism ring $\End_{\sg R}^\Z(T)$ is hereditary and has a strict root pair $(U,P)$ of $\nu_1^{-1}=\tau^{-1}$ for $U=\sHom_R^\Z(T,\Om T(1))$ and $P=\sHom_R^\Z(T,T_0)$.
\item The algebra $\End_{\sg R}^\Z(T)$ is presented by the following alternating $\widetilde{A_5}$ quiver $Q$ with double arrows between the vertices.
%	\[ \xymatrix{
%		M_{100}\ar@2[dr]\ar@2[drr]&M_{010}\ar@2[dl]\ar@2[dr]&M_{001}\ar@2[dl]\ar@2[dll]\\
%		M_{011}&M_{101}&M_{110} } \]
	\[ \xymatrix{
		M_{100}\ar@2[d]\ar@2[dr]&M_{010}\ar@2[dl]\ar@2[dr]&M_{001}\ar@2[d]\ar@2[dl]\\
		M_{110}&M_{101}&M_{011} } \]
\item There exists a commutative diagram of equivalences
	\[ \xymatrix@R=5mm{
	\sg^\Z\!R\ar[r]\ar[d]^-\rsimeq&\sg^{\Z/2\Z}\!R\ar[r]\ar[d]^-\rsimeq&\sg R\ar[d]^-\rsimeq\\
	\Db(\mod kQ)\ar[r]&\rC_3(kQ)\ar[r]&\rC_{3}^{(1/2)}(kQ). } \]
Therefore, $R$ is of hereditary representation type.
\item The above diagram restricts to equivalences
\[ \xymatrix@R=5mm{
	\add\{T_0(i)\mid i\in\Z\}\ar[d]^-\rsimeq\ar[r]&\add\{T_0(i)\mid i\in\Z/2\Z\}\ar[r]\ar[d]^-\rsimeq&\add T_0\ar[d]^-\rsimeq\\
	\add\{P\lotimes_{kQ}U^i[i]\mid i\in\Z\}\ar[r]&\add kQ\ar[r]&\add P } \]
which are $3$-cluster tilting subcategories.
\end{enumerate}
%Then there exists triangle equivalence $\sg R\simeq\rC_3^{(1/2)}(kQ)$. Therefore, $R$ is of hereditary representation type.
\end{Thm}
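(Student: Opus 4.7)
The plan is to establish (1)–(2) by direct singularity-category computations, then apply Lemma \ref{tilt} for (3), read off the quiver in (4) from the resulting Hom-structure, and invoke the machinery of Section \ref{cluster} for (5)–(6).

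For (1), since $R$ has dimension $d=4$ and Gorenstein parameter $a=2$ (its $a$-invariant being $-2$), the Serre functor on $\sg^\Z R$ is $\nu=(-2)[3]$, giving $\nu_1^{-1}=(2)[-2]$. Combined with $\Om=[-1]$ on the stable category, this yields $(\Om(1))^2=(2)[-2]=\nu_1^{-1}$. For (2), I would compute $\Om M_{100}$ from its projective cover $R^{\oplus 2}\twoheadrightarrow M_{100}$ sending $(r,s)\mapsto rx_1+sy_1$: since $S$ is a UFD with $x_1,y_1$ coprime, the kernel is parametrized by $t\in S$ via $(y_1t,-x_1t)$, and the membership constraint $r,s\in R$ pins $t$ down to $M_{(-1,0,0)}$. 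Using the identity $M_{v+(1,1,1)}=M_v(1)$ then gives $\Om M_{100}=M_{011}(-1)$, hence $\Om M_{100}(1)=M_{011}$; the other two identifications follow by symmetry among the three $\PP^1$-factors.

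For (3), I would apply Lemma \ref{tilt} with $F=\Om(1)$ and $T_0=M_{100}\oplus M_{010}\oplus M_{001}$. Condition (i) is \ref{P^1}(1); condition (ii) requires $\Hom_{\sg R}^\Z(FT_0,T_0[l])=0$ for all $l\in\Z$, which follows from the tilting property when $l\neq 0$ and reduces for $l=0$ to the vanishing $\Hom_{\sg R}^\Z(M_v,M_u)=0$ whenever $u$ is a unit vector and $v$ its complement in $\{0,1\}^3$. This comes from the reflexive identification $\Hom_R^\Z(M_v,M_u)_0\cong S_{u-v}$, which vanishes as $u-v$ has a negative entry, together with a routine check that no degree-zero map $M_v\to M_u$ can factor through a projective $R(k)$ (which would require the incompatible bounds $k\geq 1$ and $k\leq 0$). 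The hereditary property of $A$ is the case $n-2=1$ of \ref{P^1}(2).

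For (4), the quiver of the hereditary algebra $A$ is read off from the same identification $\Hom_{\sg R}^\Z(M_u,M_v)_0\cong S_{v-u}$ across the six summands: the dimension equals $2$ precisely when $v-u\in\{e_1,e_2,e_3\}$ and $0$ otherwise (apart from identities), producing the doubled alternating $\widetilde{A_5}$ quiver as claimed. For (5)–(6), the tilting object yields $\sg^\Z R\simeq\Db(\mod kQ)$, and the strict root pair from (3) plugged into the theorem at the end of Section \ref{cluster} produces the remaining equivalences $\sg^{\Z/2\Z}R\simeq\rC_3(kQ)$ and $\sg R\simeq\rC_3^{(1/2)}(kQ)$, together with the correspondence of cluster tilting subcategories under $T_0\leftrightarrow P$ and $(1)\leftrightarrow -\lotimes_{kQ}U[1]$. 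The hereditary representation type of $R$ then follows from the explicit description $\sg R\simeq\rC_3^{(1/2)}(kQ)$.

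I expect the main obstacle to be the UFD syzygy computation in (2) together with the careful Hom-vanishing in (3) (both the $R$-linear vanishing and the absence of projective factorizations); parts (4)–(6) are then essentially formal consequences of the general framework already in place.
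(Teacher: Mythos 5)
Your proposal is correct and follows the paper's overall strategy for (1), (2), (5), (6): the Serre functor computation via the $a$-invariant $-2$, the syzygy identity via the Koszul sequence $0\to M_{-100}\to R^{\oplus2}\to M_{100}\to0$ (you just write out the kernel by hand instead of Segre-ing the Koszul complex), and the transfer of cluster tilting subcategories via the root-pair theorem of Section \ref{cluster}. Two points of genuine difference. First, for (3)--(4) the paper is terser and takes a different computational route: it asserts the strict root pair via \ref{tilt} and obtains \emph{both} heredity and the quiver at once by recalling from the proof of \ref{P^1} that $\End_{\sg R}^\Z(T)$ is the quotient of the cube algebra $\End_{\PP^1}(\O\oplus\O(1))^{\otimes3}$ by the sink and source idempotents, then deleting those two vertices; you instead verify the semiorthogonality $\Hom_{\sg R}^\Z(FT_0,T_0[l])=0$ explicitly and compute the quiver directly from $\Hom_R(M_u,M_v)\cong M_{v-u}$ together with the no-factorization bound $k\geq1$, $k\leq0$, getting heredity from \ref{P^1}(2) with $n=3$. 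Both work; your route has the advantage of making the orthogonality condition of \ref{tilt} visible (the paper leaves it implicit), while the paper's route gets the quiver with no Hom computation. Two small corrections to your write-up: in (3) the required vanishing is for \emph{all} nine pairs (each summand of $FT_0$ against each summand of $T_0$), not just complementary ones --- your negative-entry argument does cover all of them, so state it that way; and the projective-factorization check is superfluous in (3) (the ambient Hom is already zero) but is exactly what is needed in (4) to see that the stable Homs along the arrows stay $2$-dimensional. Finally, in (5) the outer equivalences with $\sg^{\Z/2\Z}R$ and $\sg R$ come from \ref{P^1} (i.e.\ from \cite[0.1]{HaI}) rather than from the theorem at the end of Section \ref{cluster}, which only supplies the cluster-tilting correspondences you use in (6); this is an attribution slip, not a gap.
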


Our proof is based on Orlov's theorem (\cite{Or09}) and the reduction of higher hereditary algebras (Section \ref{red}).
\subsection{Derived Gorenstein parameters and Orlov's theorem}
Let $\La=\bigoplus_{i\geq0}\La_i$ be a positively graded Iwanaga-Gorenstein algebra such that $\La_0$ is a finite dimensional algebra of finite global dimension. By the Iwanaga-Gorenstein property, we have a duality
\[ (-)^\ast:=\RHom_\La(-,\La)\colon\Db(\mod\La)\leftrightarrow\Db(\mod\La^\op). \]
For a subset $I\subset\Z$ we denote by $\mod^I\!\La$ the full subcategory of $\mod^\Z\!\La$ formed by modules $X$ such that $X_i=0$ unless $i\in I$. When $I=\{i\in\Z\mid i\geq0\}$ we write $\mod^{\geq0}\!\La$ for $\mod^I\!\La$, with obvious variations.
We say that $\La$ has {\it derived Gorenstein parameter $a$} \cite{IYa2},see also \cite{Ha7}, if the above duality restricts to a one between
\[ \Db(\mod^0\!\La)\leftrightarrow\Db(\mod^{a}\!\La). \]
Similarly, we denote by $\proj^I\!\La$ the category of projective modules generated in degree $i\in I$, thus $\proj^I\!\La=\add\{\La(-i)\mid i\in I\}$.
Now consider the Serre quotient
\[ \qmod^\Z\!\La:=\mod^\Z\!\La/\fl^\Z\!\La \]
of the category of finitely generated graded modules by the category of finite length graded modules. When $\La$ is commutative and generated in degree $1$, this is nothing but the category of coherent sheaves over $\Proj\La$. For non-commutative $\La$, it is called a {non-commutative projective scheme} \cite{AZ}.
\begin{Thm}[\cite{Or09}]\label{Orlov}
Suppose that $\La$ has derived Gorenstein parameter $-p\in\Z$.
\begin{enumerate}
\item\label{qmod} The composite $\Db(\mod^{\geq0}\!\La)\cap\Db(\mod^{>-p}\!\La^\op)^\ast\subset\Db(\mod^\Z\!\La)\to\Db(\qmod^\Z\!\La)$ is an equivalence.
\item\label{sg} The composite $\Db(\mod^{\geq0}\!\La)\cap\Db(\mod^{>0}\!\La^\op)^\ast\subset\Db(\mod^\Z\!\La)\to\sg^\Z\!\La$ is an equivalence.
\item\label{sod} If $-p\leq0$, then there there exists a semi-orthogonal decomposition
\[ \Db(\qmod^\Z\!\La)=\sg^\Z\!\La\perp\Kb(\proj^{[0,p-1]}\!\La), \]
where we identify each category as a subcategory of $\Db(\mod^\Z\!\La)$.
\end{enumerate}
\end{Thm}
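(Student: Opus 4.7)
My plan is to prove (2), (1), and (3) in that order; parts (1) and (2) share a common template of realizing a Verdier quotient of $\Db(\mod^\Z\La)$ as equivalent to an explicitly described full subcategory, and (3) follows by comparing the two.

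For part (2), the target $\sg^\Z\La$ is the quotient of $\Db(\mod^\Z\La)$ by $\per^\Z\La=\thick\{\La(j)\mid j\in\Z\}$. Essential surjectivity I would handle by a two-sided truncation: starting with $X\in\Db(\mod^\Z\La)$, first modify $X$ by a perfect complex built from summands $\La(-j)$ with $j<0$ so that the result lies in $\Db(\mod^{\geq0}\La)$, and then apply the dual procedure via $(-)^\ast$ to force $X^\ast\in\Db(\mod^{>0}\La^\op)$. Both truncations live in $\per^\Z\La$ and so become invisible in $\sg^\Z\La$. Such truncations are available because $\La_0$ has finite global dimension, allowing finite projective resolutions strand by strand. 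Fully faithfulness reduces to a Hom-vanishing: for $X,Y$ in the intersection and $P\in\per^\Z\La$, every factorization $X\to P\to Y$ must vanish because the graded pieces $\Hom^\Z_\La(X,\La(-j))$ and $\Hom^\Z_\La(\La(-j),Y)$ are supported in incompatible ranges of $j$ due to the two-sided degree constraints on $X$ and $Y$.

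Part (1) follows the same template with $\per^\Z\La$ replaced by $\Db(\fl^\Z\La)$, the thick subcategory generated by simples of all degrees. Since this is strictly larger than $\per^\Z\La$, the corresponding intersection we must land in is strictly larger as well: we only need $X^\ast\in\Db(\mod^{>-p}\La^\op)$ rather than $\mod^{>0}\La^\op$. The precise width $p$ is dictated by the derived Gorenstein parameter hypothesis: the duality $(-)^\ast$ sends $\Db(\mod^0\La)$ into $\Db(\mod^{-p}\La^\op)$, so finite-length modules near degree $0$ contribute dual truncations in degrees down to $-p$, exactly widening the allowable range on the dual side.

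For part (3), the hypothesis $-p\leq 0$ means $p\geq 0$, and the intersection from (1) strictly contains that from (2) by exactly the objects $\La(-j)$ with $0\leq j\leq p-1$: indeed $\La(-j)\in\mod^{\geq 0}\La$ and $\La(-j)^\ast=\La(j)$ sits in $\mod^{\geq -j}\La^\op\subset\mod^{>-p}\La^\op$, whereas it is not in $\mod^{>0}\La^\op$ since $-j\leq 0$. This is what produces the semi-orthogonal decomposition, and the semi-orthogonality $\Hom(\sg^\Z\La,\Kb(\proj^{[0,p-1]}\La))=0$ reduces once more to a graded degree computation, namely that for $X$ with $X^\ast\in\mod^{>0}\La^\op$ and $0\leq j\leq p-1$ the relevant $\Hom^\Z$-group vanishes by support. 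The principal obstacle throughout is ensuring that the two-sided truncations and the subsequent Hom-vanishing arguments coordinate the module-side and dual-side degree conditions through the derived Gorenstein parameter correctly.
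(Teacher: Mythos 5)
The paper does not actually prove this statement: it is quoted from Orlov \cite{Or09} (in the derived-Gorenstein-parameter formulation of \cite{IYa2}), so your sketch has to be measured against Orlov's argument. Your outline has the right general shape (two-sided truncation into a fundamental domain plus degree-support Hom-vanishing), but it contains genuine gaps. The central one is your claim that fully faithfulness ``reduces to'' the vanishing of every factorization $X\to P\to Y$ with $P\in\per^\Z\!\La$. That vanishing (which does hold, by splitting $P$ into its $\geq0$-generated and $<0$-generated parts) only gives \emph{faithfulness}: a map is killed by the quotient functor iff it factors through the kernel. Fullness is a different statement: a morphism in $\sg^\Z\!\La$ is a roof $X\xleftarrow{s}Z\to Y$ with $\cone(s)\in\per^\Z\!\La$, and to lift it to an honest map $X\to Y$ you need orthogonality against the cones that actually occur. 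Since the fundamental domain is orthogonal only to the half $\thick\{\La(-j)\mid j\geq0\}$ on one side and to the other half on the other side, no one-line support argument applies; one must replace arbitrary roofs by adapted ones, and this is exactly what Orlov's chain of semiorthogonal decompositions of $\Db(\mod^\Z\!\La)$ (equivalently, the abstract quotient machinery of \cite{IYa2}) is for. The same machinery is what makes essential surjectivity work: after the first truncation puts $X$ into $\Db(\mod^{\geq0}\!\La)$, the dual-side truncation must be performed \emph{without leaving} $\Db(\mod^{\geq0}\!\La)$, and its existence and boundedness use the Iwanaga--Gorenstein property of $\La$ (finiteness of $\gldim\La_0$ alone is not enough). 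You explicitly name this coordination as ``the principal obstacle'' but leave it unresolved; it is the actual content of the theorem.

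Two further points. In (1), the widening of the dual window from $>0$ to $>-p$ is justified only heuristically; what is needed is the precise input that the duality $(-)^\ast$ carries $\Db(\mod^0\!\La)$ to $\Db(\mod^{-p}\!\La^\op)$, fed through the same decomposition argument that identifies the kernel $\Db(\fl^\Z\!\La)$ with the thick subcategory generated by the degree-shifted simples. In (3), besides the (correct) vanishing $\RHom^\Z_\La(X,\La(-j))=0$ for $j\geq0$ when $X^\ast\in\Db(\mod^{>0}\!\La^\op)$, you must show that the objects $\La(-j)$, $0\leq j\leq p-1$, together with the small fundamental domain actually \emph{generate} the large one, i.e.\ that every object of the $\Db(\qmod^\Z\!\La)$-domain admits a decomposition triangle with pieces in $\sg^\Z\!\La$ and $\Kb(\proj^{[0,p-1]}\!\La)$; observing that these generators lie in the larger intersection is not yet a semiorthogonal decomposition. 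Compare the paper's own Proposition \ref{ssodd}, which obtains its refinement by intersecting already-established semiorthogonal decompositions with the two fundamental domains of \ref{Orlov}; your sketch would need to construct those decompositions from scratch, and that construction is where the Gorenstein hypotheses do their work.
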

%\begin{proof}
%	We record the proof of (3) for the convenience of the reader. There is a semi-orthogonal decomposition $\Db(\mod^{>-p}\!\La)=\rK^{-,\rb}(\proj^{>-p}\!\La)=\Kb(\proj^{[-p+1,0]}\!\La)\perp\rK^{-,\rb}(\proj^{>0}\!\La)=\Kb(\proj^{[-p+1,0]}\!\La)\perp\Db(\mod^{>0}\!\La)$ since $\La_0$ has finite global dimension. Applying this to $\La^\op$ and using the duality $(-)^\ast$, we get $\Db(\mod^{>-p}\!\La^\op)^\ast=\Db(\mod^{>0}\!\La^\op)^\ast\perp\Kb(\proj^{[0,p-1]}\!\La)$. Now, intersecting with $\Db(\mod^{\geq0}\!\La)$ we get the desired decomposition by $\Kb(\proj^{[0,p-1]}\!\La)\subset\Db(\mod^{\geq0}\!\La)$.
%\end{proof}

For our purpose, the following variation of \ref{Orlov}(\ref{sod}) will be suitable.
\begin{Prop}\label{ssodd}
Suppose that $\La$ has derived Gorenstein parameter $-p\leq0$. Then for each $0\leq q\leq p$, there is a semi-orthogonal decomposition
%\[ \Db(\qmod^\Z\!\La)=\Kb(\proj^{[0,q-1]}\!\La)\perp\sg^\Z\!\La\perp\Kb(\proj^{[q,p-1]}\!\La), \]
\[ \Db(\qmod^\Z\!\La)=\Kb(\proj^{[-q,-1]}\!\La)\perp\sg^\Z\!\La\perp\Kb(\proj^{[0,p-q-1]}\!\La), \]
where we identify $\Db(\mod^{\geq0}\!\La)\cap\Db(\mod^{>-p}\!\La^\op)^\ast\subset\Db(\mod^\Z\!\La)\to\Db(\qmod^\Z\!\La)$ and $\Db(\mod^{\geq 0}\!\La)\cap\Db(\mod^{>0}\!\La^\op)^\ast\xsimeq\sg^\Z\!\La$.
\end{Prop}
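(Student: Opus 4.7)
The approach is to derive the three-piece semi-orthogonal decomposition by combining Orlov's theorem (\ref{Orlov}) applied to a shifted window with a mutation argument.

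First, applying \ref{Orlov}(\ref{sod}) to the shifted window $[-q,\,p-q-1]$---which is obtained from the standard case via the grading-shift autoequivalence $(q)$ on $\Db(\mod^\Z\La)$---yields an SOD
\[ \Db(\qmod^\Z\La) = \mathrm{W}^{\mathrm{sh}}_{-q} \perp \Kb(\proj^{[-q,\,p-q-1]}\La), \]
where $\mathrm{W}^{\mathrm{sh}}_{-q} := \Db(\mod^{\geq -q}\La) \cap \Db(\mod^{>q}\La^\op)^\ast$ is the shifted realization of $\sg^\Z\La$. Using positivity of $\La$ (so that $\Hom^\Z_\La(\La(-i),\La(-j)) = \La_{i-j} = 0$ for $i<j$), the perfect piece splits as
\[ \Kb(\proj^{[-q,\,p-q-1]}\La) = \Kb(\proj^{[-q,-1]}\La) \perp \Kb(\proj^{[0,\,p-q-1]}\La), \]
yielding the refined SOD $\Db(\qmod^\Z\La) = \mathrm{W}^{\mathrm{sh}}_{-q} \perp \Kb(\proj^{[-q,-1]}\La) \perp \Kb(\proj^{[0,\,p-q-1]}\La)$.

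The remaining step is to rearrange the first two pieces to match the proposition's form. I claim that
\[ \mathrm{W}^{\mathrm{sh}}_{-q} \perp \Kb(\proj^{[-q,-1]}\La) = \Kb(\proj^{[-q,-1]}\La) \perp \mathrm{W}_0 \]
as two SODs of the same admissible subcategory ${}^\perp\Kb(\proj^{[0,\,p-q-1]}\La) \subset \Db(\qmod^\Z\La)$, where $\mathrm{W}_0 := \Db(\mod^{\geq 0}\La) \cap \Db(\mod^{>0}\La^\op)^\ast$ is the standard realization of $\sg^\Z\La$ from \ref{Orlov}(\ref{sg}). The semi-orthogonality $\Hom(\Kb(\proj^{[-q,-1]}\La),\mathrm{W}_0) = 0$ follows from the elementary computation $\RHom^\Z_\La(\La(j), X) = X_{-j} = 0$ for $j \in [1,q]$ and $X \in \Db(\mod^{\geq 0}\La)$, which descends to $\Db(\qmod^\Z\La)$ through the identification \ref{Orlov}(\ref{qmod}). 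For the generation, I construct, for each $X \in \mathrm{W}^{\mathrm{sh}}_{-q}$, a triangle $v \to X \to X' \to v[1]$ with $v \in \Kb(\proj^{[-q,-1]}\La)$ via smart truncation of $X$ at degree $0$: the piece $v := X_{[-q,-1]}$ is a finite-dimensional graded module, hence perfect in $\Kb(\proj^{[-q,-1]}\La)$ by the finite global dimension of $\La_0$, and the complement is $X' := X_{\geq 0}$.

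The main obstacle is verifying that $X_{\geq 0}$ belongs to $\mathrm{W}_0$ inside $\Db(\qmod^\Z\La)$: the dual degree condition $(X_{\geq 0})^\ast \in \Db(\mod^{>0}\La^\op)$ must be extracted by dualizing the truncation triangle, and this step essentially uses the derived Gorenstein parameter $-p$ of $\La$, which governs how the duality $(-)^\ast$ shifts cohomological degrees of modules concentrated in a single grading degree.
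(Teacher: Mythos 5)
Your first two steps are correct: shifting \ref{Orlov}(\ref{sod}) by the degree-shift autoequivalence $(q)$ does give $\Db(\qmod^\Z\!\La)=\mathrm{W}^{\mathrm{sh}}_{-q}\perp\Kb(\proj^{[-q,p-q-1]}\!\La)$, the perfect part splits as you say, and the semiorthogonality $\Hom(\Kb(\proj^{[-q,-1]}\!\La),\mathrm{W}_0)=0$ is fine once it is computed inside the \emph{shifted} window $\Db(\mod^{\geq-q}\!\La)\cap\Db(\mod^{>-p+q}\!\La^\op)^\ast$, which contains both pieces because $q\leq p$ (the unshifted window of \ref{Orlov}(\ref{qmod}) does not contain $\La(j)$ for $j\geq1$). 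The genuine gap is the generation step of your mutation claim. First, the graded truncation $X_{[-q,-1]}$ is in general \emph{not} an object of $\Kb(\proj^{[-q,-1]}\!\La)=\thick\{\La(j)\mid 1\leq j\leq q\}$: finite global dimension of $\La_0$ only resolves it by projectives generated in degrees $\geq -q$, and projectives generated in degrees $\geq0$ do occur (already for $\La=k[x]$ the module $k(1)$, concentrated in degree $-1$, is not in $\thick\{\La(j)\mid 1\leq j\leq q\}$, as one sees by applying $\RHom^\Z_\La(-,k)$). Second, and decisively, $X_{[-q,-1]}$ has finite length, hence is zero in $\Db(\qmod^\Z\!\La)$; so your triangle (which moreover runs $X_{\geq0}\to X\to X_{[-q,-1]}$, not in the order you wrote) only says that $X$ and $X_{\geq0}$ have the same image, and your generation claim collapses to the assertion $X_{\geq0}\in\mathrm{W}_0$, i.e.\ $(X_{\geq0})^\ast\in\Db(\mod^{>0}\!\La^\op)$.

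That assertion --- precisely the step you flag as ``the main obstacle'' --- is false in general, not merely unverified. Since the derived Gorenstein parameter is $-p$, a module concentrated in internal degree $j$ dualizes into internal degree $-p-j$, so $(X_{[-q,-1]})^\ast$ lies in degrees $[-p+1,-p+q]\subseteq(-\infty,0]$, while $X^\ast$ lies in degrees $>q$. Dualizing the truncation triangle and taking internal degrees $\leq 0$ therefore gives $(X_{\geq0})^\ast\simeq(X_{[-q,-1]})^\ast[1]$ in those degrees, which is nonzero whenever $X_{[-q,-1]}\neq0$ because $(-)^\ast$ is a duality; hence $X_{\geq0}\in\mathrm{W}_0$ only for those $X$ already lying in $\Db(\mod^{\geq0}\!\La)$, and the shifted window is not exhausted by such objects. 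The repair is to replace the finite-length truncation by honest approximations in $\add\{\La(j)\mid 1\leq j\leq q\}$: intersect the shifted window with the semiorthogonal decomposition $\Db(\mod^{\geq-q}\!\La)=\Kb(\proj^{[-q,-1]}\!\La)\perp\Db(\mod^{\geq0}\!\La)$ (this is where $\gldim\La_0<\infty$ enters), and then remove the remaining $\Kb(\proj^{[0,p-q-1]}\!\La)$-component by the analogous decomposition on the $\La^\op$-side transported through $(-)^\ast$. This two-sided intersection is exactly the paper's argument (its displays \eqref{sono1}--\eqref{sono3}) and cannot be bypassed by the single degreewise truncation you propose.
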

\begin{proof}
	This is an adaptation of the proof of \ref{Orlov}(\ref{sod}).
	Shifting \ref{Orlov}(\ref{qmod}) we have an equivalence
	\begin{equation}\label{sono1}
	\Db(\mod^{\geq -q}\!\La)\cap\Db(\mod^{>-p+q}\!\La^\op)^\ast\xsimeq\Db(\qmod^\Z\!\La).
	\end{equation}
	Consider the following categories with a semi-orthogonal decomposition $\Db(\mod^{\geq-q}\!\La)=\rK^{-,\rb}(\proj^{\geq-q}\!\La)=\Kb(\proj^{[-q,-1]}\!\La)\perp\rK^{-,\rb}(\proj^{\geq 0}\!\La)=\Kb(\proj^{[-q,-1]}\!\La)\perp\Db(\mod^{\geq 0}\!\La)$ which exists since $\La_0$ has finite global dimension. Noting that $\Kb(\proj^{[-q,-1]}\!\La)\subset\Db(\mod^{>-p+q}\!\La^\op)^\ast$, intersecting with $\Db(\mod^{>-p+q}\!\La^\op)^\ast$ yields a semi-orthogonal decomposition
	\begin{equation}\label{sono2}
	\Db(\mod^{\geq-q}\!\La)\cap\Db(\mod^{>-p+q}\!\La^\op)^\ast=\Kb(\proj^{[-q,-1]}\!\La)\perp(\Db(\mod^{\geq 0}\!\La)\cap\Db(\mod^{>-p+q}\!\La^\op)^\ast).
	\end{equation}
	Similarly, we have a semi-orthogonal decomposition $\Db(\mod^{>-p+q}\!\La)=\Kb(\proj^{[-p+q+1,0]}\!\La)\perp\Db(\mod^{>0}\!\La)$. Applying this to $\La^\op$ and using the duality $(-)^\ast$, it becomes $\Db(\mod^{>-p+q}\!\La^\op)^\ast=\Db(\mod^{>0}\!\La^\op)^\ast\perp\Kb(\proj^{[0,p-q-1]}\!\La)$. Then, intersecting with $\Db(\mod^{\geq 0}\!\La)$ yields
	\begin{equation}\label{sono3}
		\Db(\mod^{\geq 0}\!\La)\cap\Db(\mod^{>-p+q}\!\La^\op)^\ast=(\Db(\mod^{\geq 0}\!\La)\cap\Db(\mod^{>0}\!\La^\op)^\ast)\perp\Kb(\proj^{[0,p-q-1]}\!\La)
	\end{equation}
	by $\Kb(\proj^{[0,p-q-1]}\!\La)\subset\Db(\mod^{\geq 0}\!\La)$.
	We now obtain the desired assertion by \eqref{sono1}, \eqref{sono2}, and \eqref{sono3}.
\end{proof}

Applying \ref{Orlov} and \ref{ssodd} to our commutative ring $R$, we get the following consequence.
\begin{Prop}
Let $R=k[x_1,y_1]\seg\cdots\seg k[x_n,y_n]$ be the Segre product of $n$ copies of $k[x,y]$ with the standard grading on each of them.%and $X=\PP^1\times\cdots\times\PP^1$, the $n$-fold product of $\PP^1$.
\begin{enumerate}
\item There are triangle equivalences
\[
\begin{aligned}
	\Db(\mod^{\geq0}\!\La)\cap\Db(\mod^{>0}\!\La^\op)^\ast&\xsimeq&&\sg^\Z\!\La, \quad \text{and}\\
	\Db(\mod^{\geq-1}\!\La)\cap\Db(\mod^{>-1}\!\La^\op)^\ast&\xsimeq&&\Db(\qmod^\Z\!R).
\end{aligned}
\]
\item Under the above identifications, we have a semi-orthogonal decomposition
\[ \Db(\qmod^\Z\!R)=\Kb(\proj^{-1}\!R)\perp\sg^\Z\!R\perp\Kb(\proj^{0}\!R). \]
\end{enumerate}
\end{Prop}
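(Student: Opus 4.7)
The plan is to verify the hypotheses of \ref{Orlov} and \ref{ssodd} for $\La=R$ and then quote them directly. The ring $R$ is positively graded Noetherian with $R_0=k$ of finite global dimension, and it is Gorenstein of dimension $n+1$ (as recorded at the start of this section), so the general setup applies; the crucial task is to identify the derived Gorenstein parameter of $R$.

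Since $R$ is commutative Gorenstein of dimension $n+1$ with $a$-invariant $-2$, its graded canonical module is $\omega_R\simeq R(-2)$. Gorenstein duality then gives $\RHom_R(k,R)\simeq k(2)[-(n+1)]$, which is concentrated in internal degree $-2$. Because $R_0=k$ is a field, every object of $\Db(\mod^0\!R)$ is a direct sum of shifts of $k$, so $(-)^\ast$ restricts to an equivalence $\Db(\mod^0\!R)\xsimeq\Db(\mod^{-2}\!R)$. Hence $R$ has derived Gorenstein parameter $-p=-2$, and $p=2$ satisfies the hypothesis $-p\leq 0$ of \ref{Orlov}(\ref{sod}) and \ref{ssodd}.

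With $p=2$ in hand, the first equivalence of (1) is literally \ref{Orlov}(\ref{sg}). The second equivalence is \ref{Orlov}(\ref{qmod}) shifted by the degree-shift autoequivalence $(1)$: using $\RHom_R(N(-1),R)\simeq\RHom_R(N,R)(1)$, the functor $(1)$ transports $\Db(\mod^{>-p}\!R^\op)^\ast$ to $\Db(\mod^{>-p+1}\!R^\op)^\ast$, and it commutes with the Serre quotient to $\qmod$, so applying $(1)$ to $\Db(\mod^{\geq 0}\!R)\cap\Db(\mod^{>-2}\!R^\op)^\ast\xsimeq\Db(\qmod^\Z\!R)$ yields $\Db(\mod^{\geq -1}\!R)\cap\Db(\mod^{>-1}\!R^\op)^\ast\xsimeq\Db(\qmod^\Z\!R)$. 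Finally, part (2) is \ref{ssodd} applied with $q=1$ and $p=2$:
\[ \Db(\qmod^\Z\!R)=\Kb(\proj^{[-1,-1]}\!R)\perp\sg^\Z\!R\perp\Kb(\proj^{[0,0]}\!R)=\Kb(\proj^{-1}\!R)\perp\sg^\Z\!R\perp\Kb(\proj^{0}\!R), \]
and under the identifications of (1) this is the claimed decomposition.

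The only real content lies in the computation of the derived Gorenstein parameter; once this is fixed, everything reduces to a direct application of the theorems recalled just above, so there is no substantive obstacle beyond keeping track of the degree-shift bookkeeping in passing from the $q=0$ form of Orlov's equivalence to the symmetric $q=1$ form relevant to (2).
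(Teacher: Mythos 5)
Your proposal is correct and follows essentially the same route as the paper: the paper obtains this Proposition by simply applying \ref{Orlov} and \ref{ssodd} to $R$, with the derived Gorenstein parameter $-p=-2$ coming from the $a$-invariant $-2$ (equivalently $\RHom_R(k,R)\simeq k(2)[-(n+1)]$ together with semisimplicity of $\mod^0\!R$), and with $q=1$ in \ref{ssodd}. Your degree-shift bookkeeping for the second equivalence in (1) is exactly the shift already performed in the proof of \ref{ssodd}, so there is nothing further to add.
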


\subsection{Reduction of $n$-representation infinite algebras and proof of \ref{P^1}}
To prove our result \ref{P^1}, we have to track the equivalences in \ref{Orlov}. 
Recall that for $v\in\Z^n$ we denote by $M_v=\bigoplus_{i\in\Z}S_{v+(i,\ldots,i)}$. Also if $v=(v_1,\ldots,v_n)$, we put $\mathcal{O}(v)=\mathcal{O}(v_1)\boxtimes\cdots\boxtimes\mathcal{O}(v_n)\in\coh X$ for $X=\PP^1\times\cdots\times\PP^1$.

We first prepare the following.
\begin{Lem}\label{generator}
Put $m=\min\{v_i\mid 1\leq i\leq n\}$. Then $M_v$ is generated in degree $-m$.
\end{Lem}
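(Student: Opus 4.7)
The plan is to verify two things: (i) $M_v$ vanishes in degrees $<-m$, so $-m$ is the bottom degree where $M_v$ could possibly be generated, and (ii) for every $j>-m$, the multiplication map $R_1\otimes_k M_{v,j-1}\to M_{v,j}$ is surjective. Since $R$ is generated in degree $1$ over $R_0$, these two facts together will give that $M_v$ is generated in degree $-m$ as a graded $R$-module.

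For (i), note that by definition $M_{v,j}=S_{v+(j,\ldots,j)}$, and this multigraded component of the polynomial ring $S=k[x_i,y_i\mid 1\leq i\leq n]$ is nonzero if and only if $v_i+j\geq0$ for every $i$, which is exactly the condition $j\geq -m$. So the support of $M_v$ is precisely $\{j\in\Z\mid j\geq -m\}$.

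For (ii), I would work with monomial bases. Recall $R_1=S_{(1,\ldots,1)}$ is spanned by the $2^n$ monomials $\prod_{i=1}^n z_i$ with $z_i\in\{x_i,y_i\}$, and $M_{v,j}=S_{v+(j,\ldots,j)}$ has a basis of products $\mu=\prod_{i=1}^n \mu_i$ with $\mu_i\in k[x_i,y_i]$ homogeneous of degree $v_i+j$. For $j>-m$ we have $j\geq -m+1$, hence $v_i+j\geq v_i-m+1\geq 1$ for every $i$ (using $v_i\geq m$). Thus each factor $\mu_i$ has positive degree, so we may pick $z_i\in\{x_i,y_i\}$ dividing $\mu_i$ and write $\mu_i=z_i\mu_i'$. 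Setting $z=\prod_i z_i\in R_1$ and $\mu'=\prod_i\mu_i'\in M_{v,j-1}$ gives $\mu=z\cdot\mu'$, which is the required surjectivity.

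There is no real obstacle here; the content is simply that the key inequality $v_i+j\geq 1$ holds uniformly in $i$ as soon as $j>-m=-\min_i v_i$, which is exactly what allows one variable to be pulled out of each tensor factor simultaneously. Everything beyond this is bookkeeping on monomial bases.
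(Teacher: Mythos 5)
Your proof is correct. It takes a more elementary, degreewise route than the paper's: you pin down the support of $M_v$ as $\{j\in\Z\mid j\geq -m\}$ and then show $R_1\cdot M_{v,j-1}=M_{v,j}$ for every $j>-m$ by pulling one variable out of each tensor factor of a monomial (the key inequality $v_i+j\geq 1$ for all $i$), which gives generation in degree $-m$ by induction. The paper instead normalizes to the case $\min_i v_i=0$ (say $v_n=0$ and $v_i\geq 0$ otherwise), writes $M_v=T(v_1)\seg\cdots\seg T(v_{n-1})\seg T$ with $T=k[x,y]$, forms the Segre product of the canonical presentations $T^{\oplus(v_i+1)}\to T(v_i)\to C_i\to 0$, and uses exactness of the Segre product together with $C_n=0$ to obtain a surjection onto $M_v$ from a free module generated in degree $0$. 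Both arguments rest on the same combinatorial fact—once every factor has positive degree one can split off an element of multidegree $(1,\ldots,1)$—but yours avoids the degree normalization and the bookkeeping of cokernels under Segre products, at the cost of writing out monomial bases; it also makes explicit that $-m$ is precisely the bottom degree of $M_v$, rather than only exhibiting generators in that degree.
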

\begin{proof}
	Certainly we may assume $v_1,\ldots,v_{n-1}\geq0$ and $v_n=0$. Writing $T=k[x,y]$ with $\deg x=\deg y=1$, we have $M_v=T(v_1)\seg\cdots\seg T(v_{n-1})\seg T$. Consider the maps $T^{\oplus(v_i+1)}\to T(v_i)\to C_i\to 0$ for $1\leq i\leq n$, where the first map is the canonical one and $C_i$ the cokernel. Since $v_n=0$ we have $C_n=0$. Noting that taking the Segre product is exact, we get a map $R^\oplus \to M_v$ whose cokernel is $C_1\seg\cdots\seg C_{n-1}\seg C_n=0$ since $C_n=0$. Therefore $M_v$ is generated in degree $0$.
\end{proof}

\begin{Lem}\label{M_v in sg}
If $0<v<1$, then $M_v\in\Db(\mod^{\geq0}\!R)\cap\Db(\mod^{>0}\!R)^\ast$, the fundamental domain for $\sg^\Z\!R$.
\end{Lem}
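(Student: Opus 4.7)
The plan is to verify the two containments separately. The first, $M_v\in\Db(\mod^{\geq 0}R)$, is immediate: since $0<v<1$ forces $v_i\in\{0,1\}$ with at least one $v_i=0$ (because $v\neq1$), we have $\min_i v_i=0$, so Lemma \ref{generator} (or direct inspection of $(M_v)_j=S_{v+j(1,\ldots,1)}$, which vanishes when any $v_i+j<0$) shows that $M_v$ is generated in degree $0$, and in particular lies in $\mod^{\geq0}R$.

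For the harder containment $M_v\in\Db(\mod^{>0}R)^\ast$, i.e.\ $\RHom_R(M_v,R)\in\Db(\mod^{>0}R)$, I would proceed in two steps. First, identify the sheafification $\widetilde{M_v}$ with the line bundle $\O(v_1)\boxtimes\cdots\boxtimes\O(v_n)$ on $X=(\PP^1)^n$; then the intermediate local cohomology of $M_v$ is computed by Serre's theorem from the sheaf cohomology of $\O(v+j(1,\ldots,1))$ for $j\in\Z$. By K\"unneth, a line bundle $\O(a_1,\ldots,a_n)$ on $X$ has nonzero cohomology only in the degree equal to the number of $a_i\leq-2$, and only if no $a_i=-1$. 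For $v_i\in\{0,1\}$ the vector $v+j(1,\ldots,1)$ has entries in $\{j,j+1\}$: for $j\geq 0$ all entries are $\geq 0$, for $j\leq -2$ all entries are $\leq -1$ (with $-1$ appearing only when $v_i=1$, $j=-2$, so cohomology vanishes), and for $j=-1$ every entry is $0$ or $-1$. In every case the intermediate cohomology vanishes, so $M_v$ is MCM and $\RHom_R(M_v,R)=\Hom_R(M_v,R)$.

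The second step identifies $\Hom_R(M_v,R)$ with $M_{1-v}(-1)$. Since $\widetilde{M_v}$ is a line bundle, $M_v$ has generic rank $1$ as an $R$-module; together with torsion-freeness and reflexivity (which follow from the MCM property), this implies that any graded $R$-linear map $M_v\to R(j)$ is multiplication by an element of $\operatorname{Frac}(S)$. The $\Z^n$-degree condition forces this element to have $\Z^n$-degree $j(1,\ldots,1)-v$, hence it lies in $S_{-v+j(1,\ldots,1)}$. We therefore obtain $(M_v^\ast)_j=S_{-v+j(1,\ldots,1)}=(M_{1-v}(-1))_j$, using the identity $M_{w+k(1,\ldots,1)}=M_w(k)$. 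Finally, since $0<1-v<1$ as well, the first paragraph applied to $1-v$ shows that $M_{1-v}$ is generated in degree $0$, so $M_v^\ast\cong M_{1-v}(-1)$ is concentrated in degrees $\geq 1$, giving $M_v^\ast\in\mod^{>0}R$ as required. The main obstacle I expect is Step 2: rigorously justifying that every graded $R$-linear map $M_v\to R$ is multiplication by an element of $S$, which is why the MCM input from Step 1 (ensuring reflexivity) is essential.
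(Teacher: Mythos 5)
Your proof is correct and follows essentially the same route as the paper: membership in $\mod^{\geq0}\!R$ via \ref{generator}, the (M)CM property of $M_v$ so that $\RHom_R(M_v,R)=\Hom_R(M_v,R)$, the identification of this dual with $M_{-v}=M_{1-v}(-1)$, and concentration in degrees $\geq1$ via generation degrees again. The only difference is that the paper cites \cite{GW1,St,VdB91} for the CM property and states $\Hom_R(M_v,R)=M_{-v}$ outright, whereas you supply proofs of these standard inputs — the K\"unneth vanishing on $(\PP^1)^n$ (where, to get depth $\geq 2$ and hence MCM-ness, you also need the saturation $(M_v)_j=H^0(X,\O(v+j(1,\ldots,1)))$, which follows from the same K\"unneth computation in degree $0$) and the rank-one multiplication argument for the dual.
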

\begin{proof}
	If $v<1$, then $\min\{v_i\mid 1\leq i\leq n\}\leq0$, so \ref{generator} shows that $M_v\in\mod^{\geq0}\!R$. Next, note that $M_v$ is CM if $0\leq v\leq 1$ (see e.g.\! \cite{GW1,St,VdB91}), thus $\RHom_R(M_v,R)=\Hom_R(M_v,R)=M_{-v}$. By \ref{generator} again, this is concentrated in degree $\geq1$. We obtain the desired assertion from these two claims.
\end{proof}

%Now we turn to the computation for $\Db(\coh X)$. As the tensor product of a tilting bundle $\O\oplus\O(1)\in\coh\PP^1$, we have a ``standard'' tilting bundle $\bigoplus_{0\leq v\leq1}\O(v)\in\coh X$. We give a different version which is more suited to Orlov's semi-orthogonal decomposition \ref{Orlov}(\ref{sod}).
%\begin{Prop}\label{Utilt}
%The object $U=\O(-1)\oplus\O\oplus\bigoplus_{0<v<1}\O(v)\in\coh X$ is a tilting bundle with $n$-representation infinite endomorphism algebra.
%\end{Prop}
%\begin{proof}
%	We claim that $U$ is obtained from the standard tilting object $U_0=\bigoplus_{0\leq v\leq 1}\O(v)$ by successive mutations.
%	Let $T=k[x,y]$ with $\deg x=\deg y=1$ and consider the Koszul complex $T(-1)\to T^{\oplus2}\to T(1)$, which we view as a (vertical) morphism
%	\[ \xymatrix{
%		T(-1)\ar[d]\\
%		T^{\oplus2}\ar[r]&T(1) } \]
%	of complexes whose mapping cone is $k(1)$. Note that this is the truncation of the complex $T^{\oplus2}\to T(1)$ with respect to the stable $t$-structure 
%\end{proof}

Now we are ready to prove the main result of this section.
\begin{proof}[Proof of \ref{P^1}]
	We have a standard tilting object $\bigoplus_{0\leq v\leq 1}\mathcal{O}(v)\in\Db(\coh X)$ obtained as the tensor product of the ones $\mathcal{O}\oplus\mathcal{O}(1)\in\Db(\coh\PP^1)$. Note that this is the image of $U:=\bigoplus_{0\leq v\leq1}M_v\in\qmod^\Z\!R$ under the equivalence $\qmod^\Z\!R\xsimeq\coh X$,
	Now consider the semi-orthogonal decomposition
	\[ \Db(\qmod^\Z\!R)=\Kb(\proj^{-1}\!R)\perp\sg^\Z\!R\perp\Kb(\proj^{0}\!R) \]
	from \ref{ssodd}. We have seen in \ref{M_v in sg} that the summand $T=\bigoplus_{0<v<1}M_v$ of $U$ lies in $\sg^\Z\!R$, and clearly $R\in\Kb(\proj^0\!R)$ and $R(1)\in\Kb(\proj^{-1}\!R)$ for the remaining summands. We therefore see that $T\in\sg^\Z\!R$ is a tilting object.
	
	Now we show that the endomorphism ring $\End_{\sg R}^\Z(T)$ being $(n-2)$-representation infinite.
	The endomorphism ring of $U$ is $B:=\End_X(\bigoplus_{0\leq v\leq 1}\mathcal{O}(v))=\End_{\PP^1}(\mathcal{O}\oplus\mathcal{O}(1))^{\otimes n}$, the $n$-fold tensor product of the path algebra of the Kronecker quiver $\xymatrix{\circ\ar@2[r]&\circ}$, thus it is $n$-representation infinite by \cite[2.10]{HIO}.
	Notice that $\End_{\sg R}^\Z(T)$ is the quotient of $B$ by the idempotents at $2$ vertices, the one $e_1$ at the sink (corresponding to $M_1=R(1)$) and the one $e_0$ at the source (corresponding to $M_0=R$). It is easy to see that the injective dimension of the simple $B$-module corresponding to the summand $M_v$ is $n-\sum_{i=1}^nv_i$. Therefore, applying \ref{-1RI} to $B$ and the idempotent $e_0$ shows that $B/(e_0)$ is $(n-1)$-representation infinite. Next, we can similarly apply \ref{-1RI*} to $B/(e_0)$ and $e_1$ to deduce that $\End_{\sg R}^\Z(T)=B/(e_0,e_1)$ is $(n-2)$-representation infinite.
	
	The existence of the last commutative diagram follows from \cite[0.1]{HaI}.
\end{proof}

\subsection{Proof of \ref{n=3}}
%This subsection is devoted to the computation needed for proof of \ref{n=3}.
%Let us start with the computation of cluster tilting object for $R=k[x_1,y_1]\seg \cdots\seg k[x_n,y_n]$. Consider the region
%\[ C:=\{v=(v_1,\ldots,v_n)\in\Z^n\mid 0\leq v_{i+1}-v_{i}\leq1 \text{ for all } 1\leq i< n\} \]
%and its subset $C_m:=\{v\in C\mid v_1=m\}$ for each $m\in\Z$.
%\begin{Lem}
%Consider the subcategory $\C:=\add\{M_v\mid v\in C\}\subset\mod^\Z\!R$.
%\begin{enumerate}
%\item For $m=-1,0$, we have $\C\subset\CM^\Z\!R$.
%\item 
%\end{enumerate}
%the object $\bigoplus_{v\in C_m}M_v\in\CM R$ is an $n$-cluster tilting object.
%\end{Lem}

\begin{proof}[Proof of \ref{n=3}]
	(1)  This is valid in any dimension. Since $R=k[x_1,y_1]\seg\cdots\seg k[x_n,y_n]$ is the Segre product of rings of the common $a$-invariant $-2$, $R$ also has $a$-invariant $-2$. Noting that $\dim R=n+1$, the graded Auslander-Reiten duality (see e.g.\! \cite[3.5]{IT}\cite[3.19]{HaI}) implies that the category $\sg^\Z\!R$ has a Serre functor $\nu=(-2)[n]$. Therefore, $\nu_{n-2}^{-1}=(2)[-2]=\Om^2(2)$ has a square root $(1)[-1]=\Om(1)$.
	
	(2)  We only prove $\Om M_{100}(1)=M_{011}$. Let $S_i=k[x_i,y_i]$ for $i=1,2,3$ with $\deg x_i=\deg y_i=1$. Consider the Koszul complex $0\to S_1(-1)\to S^{\oplus2}\to S(1)\to k(1)\to 0$. Taking the Segre product with $S_2\seg S_3$, we get an exact sequence
	\[ \xymatrix{ 0\ar[r]&M_{-100}\ar[r]& R^{\oplus2}\ar[r]&M_{100}\ar[r]& 0 }. \]
	Noting that $M_{-100}(1)=M_{011}$, this sequence shows $\Om M_{100}(1)=M_{011}$.
	
	(3)(4)  The first assertion of (3) follows from \ref{tilt}. Indeed, the pair $(U,P)$ with $P=\Hom_{\sg R}^\Z(T,T_0)$ and $U=\Hom_{\sg R}^\Z(T,\Om T(1))$ is a strict root pair.
	We next verify the second assertion of (3). By the proof of \ref{P^1}, the algebra $\End_{\sg R}^\Z(T)$ is the quotient of $\End_{\PP^1}(\mathcal{O}\oplus\mathcal{O}(1))^{\otimes 3}$, which is presented by the cube below, by the idempotents corresponding to the sink and the source.
%	\[ \xymatrix{
%		&M_{000}\ar@2[dr]\ar@2[d]\ar@2[dl]&\\
%		M_{100}\ar@2[dr]\ar@2[d]&M_{010}\ar@2[dl]\ar@2[dr]&M_{001}\ar@2[dl]\ar@2[d]\\
%		M_{110}\ar@2[dr]&M_{101}\ar@2[d]&M_{011}\ar@2[dl]\\
%		&M_{111}& } \]
	\[ \xymatrix{
		&M_{100}\ar@2[r]\ar@2[dr]&M_{110}\ar@2[dr]&\\
		M_{000}\ar@2[ur]\ar@2[r]\ar@2[dr]&M_{010}\ar@2[dr]\ar@2[ur]&M_{101}\ar@2[r]&M_{111}\\
		&M_{001}\ar@2[r]\ar@2[ur]&M_{011}\ar@2[ur]& } \]
	Then we obtain the quiver $Q$ by deleting these vertices, which proves (4).
	
	(5)  This is a consequece of \ref{P^1}.
	
	(6)  We start from the subcategories in the second row. By \cite[5.2]{Ha7} these are $3$-cluster tilting subcategories of respective categories in (5). Noting that the functor $-\lotimes_{kQ}U=\tau^{-1/2}$ on $\Db(\mod kQ)$ corresponds to $\Om(1)=(1)[-1]$ on $\sg^\Z\!R$, we immediately get the corresponding subcategories on the first row. In particuar, they are $3$-cluster tilting subcategories.
\end{proof}
	
%\section{Strict tilting}
%Let $\T$ be a $k$-linear, $\Hom$-finite triangulated category with a Serre functor $\nu$ and $n, a\in\Z$. We assume that $\T$ is endowed with a triangle automorphism $(1)$ such that $\nu_n^{-1}=(a)$.
%\begin{Lem}
%	Suppose that an $n$-rigid object $X\in\T$ satisfies that $\Hom_{\T/(1)}(X,X[-p])$ is concentrated in degree $p$ for all $p\geq0$. Then the endomorphism ring of $T:=\bigoplus_{i=0}^{a-1}X(i)[-i]$ is $(n-a)$-representation infinite.
%\end{Lem}
%\begin{proof}
%	We prove the following statements.
%	\begin{enumerate}
%		\item $T\in\T$ is a pretilting object, that is, $\Hom_\T(T,T[i])=0$ for all $i\neq0$.
%		\item $\End_\T(T)$ is $(n-a)$-representation infinite.
%	\end{enumerate}
%	Under the assumption
%	
%	We have to show that $\Hom_\T(T,\nu_{n-a}^{-q}T[p])=0$ for all $q\geq0$ and $p\neq0$. In view of the formulas $T=\bigoplus_{i=0}^{a-1}X(i)[-i]$ and $\nu_{n-a}^{-1}=(a)[-a]$, this amounts to saying that $\Hom_\T(X,X(q)[-q][p])=0$ for all $q>-a$ and $p\neq0$.
%	
%	If $p-q\leq0$, then $\Hom_\T(X,X(q)[-q][p])$ is the degree $q$ part of $\Hom_{\T/(1)}(X,X[p-q])$, thus it is $0$ unless $q-p=q$, or $p=0$.
%	
%	If $0<p-q<n$, then $\Hom_\T(X,X[p-q])=0$ by $n$-rigidity of $X$.
%	
%	If $n\leq p-q$, we have $\Hom_\T(X,X(q)[-q][p])=D\Hom_\T(X(q)[p-q],\nu X)=D\Hom_\T(X,X(-a-q)[n-p+q])$ by Serre duality. It is easy to verify that this is $0$ by assumption.
%\end{proof}

\thebibliography{99}
\bibitem{Am09} C. Amiot, {Cluster categories for algebras of global dimension 2 and quivers with potentional}, Ann. Inst. Fourier, Grenoble 59, no.6 (2009) 2525-2590.
\bibitem{AIR} C. Amiot, O. Iyama, and I. Reiten, {Stable categories of Cohen-Macaulay modules and cluster categories}, Amer. J. Math, 137 (2015) no.3, 813-857.
\bibitem{AS} M. Artin and W. Schelter, {Graded algebras of global dimension 3}, Adv. Math. 66 (1987), no. 2, 171-216.
%\bibitem{ATV} M. Artin, J. Tate, and M. Van den Bergh, {Some algebras associated to automorphisms of elliptic curves}, in: The Grothendieck Festschrift, Vol. I, 33-85, Progr. Math. 86, Birkhäuser Boston, Boston, MA, 1990.
\bibitem{AZ} M. Artin and J. J. Zhang, {Noncommutative projective schemes}, Adv. Math. 109 (1994) 228-287.
\bibitem{Au62} M. Auslander, {On the purity of the branch locus}, Amer. J. Math. 84 (1962), 116–125.
\bibitem{Au78} M. Auslander, {Functors and morphisms determined by objects}, in: Representation Theory of Algebras, Lecture Notes in Pure and Applied Mathematics 37, Marcel Dekker, New York, 1978, 1-244.
%\bibitem{Au86} M. Auslander, {Rational singularities and almost split sequences}, Trans. Amer. Math. Soc. 293 (2) (1986), 511-531.
\bibitem{BSW} R. Bocklandt, T. Schedler, and M. Wemyss, {Superpotentials and higher order derivations}, J. Pure Appl. Algebra 214 (2010), no. 9, 1501-1522.
\bibitem{BP} A. Bondal and A. Polishchuk, {Homological properties of associative algebras: the method of helices}, (Russian) Izv. Ross. Akad. Nauk Ser. Mat. 57 (1993), no. 2, 3–50; translation in Russian Acad. Sci. Izv. Math. 42 (1994), no. 2, 219–260.
\bibitem{BIRSm} A. B. Buan, O. Iyama, I. Reiten, and D. Smith, {Mutation of cluster-tilting objects and potentials}, Amer. J. Math. 133 (2011), no. 4, 835–887.
\bibitem{BMRRT} A. B. Buan, R. Marsh, M. Reineke, I. Reiten, and G. Todorov, {Tilting theory and cluster combinatorics}, Adv. Math. 204 (2006) 572-618.
\bibitem{Bu} R. O. Buchweitz, {Maximal Cohen-Macaulay modules and Tate cohomology}, Mathematical Surveys and Monographs, 262. American Mathematical Society, Providence, RI, [2021], ©2021. xii+175 pp.
\bibitem{CA1} S. Fomin and A. Zelevinsky, {Cluster algebras. I. Foundations}, J. Amer. Math. Soc. 15 (2002), no. 2, 497-529.
\bibitem{Gi} V. Ginzburg, {Calabi-Yau algebras}, arXiv:0612139.
\bibitem{GW1} S. Goto and K. Watanabe, {On graded rings I}, J. Math. Soc. Japan 30 (1978), no. 2, 179–213.
\bibitem{Guo} L. Guo, {Cluster tilting objects in generalized higher cluster categories}, J. Pure Appl. Algebra 215 (2011), no. 9, 2055–2071.
\bibitem{Ha3} N. Hanihara, {Cluster categories of formal DG algebras and singularity categories}, Forum of Mathematics, Sigma (2022), Vol. 10:e35 1–50.
\bibitem{Ha4} N. Hanihara, {Morita theorem for hereditary Calabi-Yau categories}, Adv. Math. 395 (2022) 108092.
\bibitem{Ha6} N. Hanihara, {Non-commutative resolutions for Segre products and Cohen-Macaulay rings of hereditary representation type}, Trans. Amer. Math. Soc. 378 (2025), pp. 2429-2475.
\bibitem{Ha7} N. Hanihara, {Calabi-Yau completions for roots of dualizing dg bimodules}, arXiv:2412.18753.
\bibitem{HaI} N. Hanihara and O. Iyama, {Enhanced Auslander-Reiten duality and Morita theorem for singularity categories}, arXiv:2209.14090.
\bibitem{HIO} M. Herschend, O. Iyama, and S. Oppermann, {$n$-representation infinite algebras}, Adv. Math. 252 (2014) 292-342.
\bibitem{HN} A. Higashitani and Y. Nakajima, {Conic divisorial ideals of Hibi rings and their applications to non-commutative crepant resolutions}, Selecta Math. (N.S.) 25 (2019), no. 5, Paper No. 78, 25 pp.
\bibitem{Iy07a} O. Iyama, {Higher-dimensional Auslander-Reiten theory on maximal orthogonal subcategories}, Adv. Math. 210 (2007) 22-50.
\bibitem{Iy07b} O. Iyama, {Auslander correspondence}, Adv. Math. 210 (2007) 51-82.
\bibitem{Iy11} O. Iyama, {Cluster tilting for higher Auslander algebras}, Adv. Math. 226 (2011) 1-61.
\bibitem{Iy18} O. Iyama, {Tilting Cohen-Macaulay representations}, Proceedings of the International Congress of Mathematicians--Rio de Janeiro 2018. Vol. II. Invited lectures, 125-162, World Sci. Publ., Hackensack, NJ, 2018.
\bibitem{IR} O. Iyama and I. Reiten, {Fomin-Zelevinsky mutation and tilting modules over Calabi-Yau algebras}, Amer. J. Math. 130 (2008), no. 4, 1087-1149.
\bibitem{IT} O. Iyama and R. Takahashi, {Tilting and cluster tilting for quotient singularities}, Math. Ann. 356 (2013), 1065-1105.
%\bibitem{IW14} O. Iyama and M. Wemyss, {Maximal modifications and Auslander-Reiten duality for non-isolated singularities}, Invent. Math. 197 (2014), no. 3, 521-586.
\bibitem{IYa2} O. Iyama and D. Yang, {Quotients of triangulated categories and equivalences of Buchweitz, Orlov and Amiot--Guo--Keller}, Amer. J. Math. 142 (2020), no. 5, 1641–1659.
\bibitem{Ka23} M. Kalck, {Derived categories of singular varieties and finite dimensional algebras}, in: {Representation Theory of Quivers and Finite-Dimensional Algebras,} Oberwolfach Rep. 20 (2023), no. 1, pp. 432–434.
\bibitem{Ke94} B. Keller, {Deriving DG categories}, Ann. scient. \'Ec. Norm. Sup. (4) 27 (1) (1994) 63-102.
\bibitem{Ke05} B. Keller, {On triangulated orbit categories}, Doc. Math. 10 (2005), 551-581.
\bibitem{Ke11} B. Keller, {Deformed Calabi-Yau completions}, with an appendix by M. Van den Bergh, J. Reine Angew. Math. 654 (2011) 125-180.
\bibitem{LW} G. J. Leuschke and R. Wiegand, {Cohen-Macaulay representations}, vol. 181 of Mathematical Surveys and Monographs, American Mathematical Society, Province, RI, (2012).
\bibitem{MM} H. Minamoto and I. Mori, {The structure of AS-Gorenstein algebras}, Adv. Math. 226 (2011) 4061-4095.
\bibitem{Or04} D. Orlov, {Triangulated categories of singularities and D-branes in Landau--Ginzburg modules}, Tr. Mat. Inst. Steklova 246 (2004), Algebr. Geom. Metody, Svyazi i Prilozh, 240-262.
\bibitem{Or09} D. Orlov, {Derived categories of coherent sheaves and triangulated categories of singularities}, Algebra, arithmetic, and geometry: in honor of Yu. I. Manin. Vol. II, 503-531, Progr. Math., 270, Birkhauser Boston, Inc., Boston, MA, 2009.
\bibitem{RR} M. L. Reyes and D. Rogalski, {Graded twisted Calabi-Yau algebras are generalized Artin-Schelter regular}, Nagoya Math. J., (2021), 1–54.
\bibitem{Ric2} J. Rickard, {Derived categories and stable equivalence}, J. Pure Appl. Algebra 61 (1989) 303-317.
\bibitem{St} R. P. Stanley, {Combinatorics and commutative algebra}, Second edition. Progress in Mathematics 41, Birkhäuser Boston, Inc., Boston, MA, 1996. x+164 pp.
\bibitem{VdB91} M. Van den Bergh, {Cohen-Macaulayness of modules of covariants}, Invent. Math. 106 (1991), no. 2, 389–409.
\bibitem{VdB04} M. Van den Bergh, {Non-commutative crepant resolutions}, The legacy of Niels Henrik Abel, 749–770, Springer, Berlin, 2004.
\bibitem{Yo90} Y. Yoshino, {Cohen-Macaulay modules over Cohen-Macaulay rings}, London Mathematical Society Lecture Note Series 146, Cambridge University Press, Cambridge, 1990.
\end{document}